\title{Optimal design of mixtures of ferromagnetic interactions}
\author{Andrea Braides, \\ \small Dipartimento di Matematica, Universit\`a di Roma Tor Vergata
\\ \small via della ricerca scientifica 1, 00133 Roma, Italy \\\\
Leonard Kreutz, \\ \small
Gran Sasso Science Institute\\ \small
 Viale Francesco Crispi 7, 67100 L'Aquila,Italy}
\date{}                                           % Activate to display a given date or no date
\def\R{\mathbb R}
\def\e{\varepsilon}
\begin{document}
\maketitle
%\section{}
%\subsection{}
\newtheorem{lemma}{Lemma}
\newtheorem{corollary}[lemma]{Corollary}
\newtheorem{proposition}[lemma]{Proposition}
\newtheorem{remark}[lemma]{Remark}
\newtheorem{example}[lemma]{Example}
\newtheorem{theorem}[lemma]{Theorem}
\newtheorem{definition}[lemma]{Definition}
\begin{abstract} 
We provide a general framework for the optimal design of surface energies on networks.
We prove sharp bounds for the homogenization of discrete systems describing mixtures of ferromagnetic interactions by constructing optimal microgeometries, and we prove a localization principle which allows to reduce to the periodic setting in the general nonperiodic case.
\end{abstract}

\noindent
\textbf{Keywords:} lattice energies, $\Gamma$-convergence, homogenization, bounds, optimal design

\noindent\textbf{AMS Subject Classification:} 35B27, 74Q20, 82B20, 49J45, 49Q20,

\section{Introduction}
The optimization of the design of structures can sometimes be viewed as a minimization or maximization problem of a cost or compliance subjected to design constraints. A typical example
is shape optimization for given loads of conducting or elastic structures composed of a prescribed amounts of a certain number of materials. In that case the existence of an optimal shape is not guaranteed, and a relaxed formulation must be introduced that takes into account the possibility of fine mixtures.
The homogenization method as presented for example in the book by Allaire \cite{Allaire}
can be regarded as subdividing the problem into the description of all possible materials obtained as mixtures, and subsequently optimize in the enlarged class of homogenized materials that satisfy the corresponding relaxed design constraint.

In this paper we extend the homogenization method to the
optimal design of networks for surface energies. From a standpoint of Statistical Mechanics, the object of our study are mixtures of ferromagnetic interactions under the constraint that
interaction coefficients (bonds) may only take values in a fixed set of parameters.  
We consider energies of Ising type defined on a cubic lattice, of the form
\begin{equation}
E(u)=\frac{1}{4}\sum_{\xi \in V}\sum_{i \in \mathbb{Z}^d}c_{i,\xi}(u_i-u_{i+\xi})^2
\end{equation}
where $u_i\in \{-1,1\}$, $V \subset \mathbb{Z}^d$ is a finite set describing the range of interactions,
and the coefficients $c_{i,\xi}$, representing the strength of the interaction at the point $i$ in direction $\xi$, satisfy some design constraint described below. Note that we prefer to consider the interaction in the form $c_{i,\xi}(u_i-u_{i+\xi})^2$ rather that in the (equivalent, up to a scaling factor) form $-c_{i,\xi}u_iu_{i+\xi}$, which is more customary in Statistical Mechanics, since in this way the energy of ground states is normalized to $0$ and we avoid possible indeterminate forms in the case of infinite domains.
In order to describe the surface energy corresponding to such a system we follow a discrete-to-continuum approach as in \cite{BP,AS} (see also \cite{CdL,B}): we scale the
energies introducing a parameter $\e$ and defining 
\begin{equation}
E_\e(u)=\frac{1}{4}\sum_{\xi \in V}\sum_{i \in \mathbb{Z}^d}\e^{d-1}c_{i,\xi}(u_i-u_{i+\xi})^2,
\end{equation}
where now the functions $u$ are considered as defined on $\e\mathbb{Z}^d$, with $u_i$
the value at $\e i$. By identifying each $u$ with the corresponding piecewise-constant interpolation on $\e\mathbb{Z}^d$ such energies can be considered as defined in a Lebesgue space $L^1$, where they are equicoercive, so that their $\Gamma$-limit can be used as a continuum approximation in the description of the corresponding minimum problem \cite{GCB}.

Since in our optimal-design problem we have to take into account the possibility of locally varying the arrangement of the interactions we further introduce a dependence on $\e$ on
the coefficients, and consider energies 
\begin{equation}\label{etre}
E_\e(u)=\frac{1}{4}\sum_{\xi \in V}\sum_{i \in \mathbb{Z}^d}\e^{d-1}c^\e_{i,\xi}(u_i-u_{i+\xi})^2.
\end{equation}
Compactness theorems \cite{AS} ensure that the $\Gamma$-limit of such energies exists up to subsequences, is finite on functions in $BV_{\rm loc}(\R^d;\{-1,1\})$, and takes the form of a surface energy
\begin{equation}\label{equattro}
F(u)=\int_{\partial^*\{u=1\}}\varphi(x,\nu_u)d{\mathcal H}^{d-1},
\end{equation}
where $\partial^*\{u=1\}$ is the reduced boundary of $\{u=1\}$ and $\nu_u$ is its inner normal.

The optimal-design constraint that we consider is that for fixed $\xi\in V$ the bonds $c_{i,\xi}$ may take two positive values $\alpha_\xi$ and $\beta_\xi$ depending on $\xi$ with 
\begin{equation}
\alpha_\xi<\beta_\xi.
\end{equation}
The simplest case is that of nearest neighbours, when $V=\{e_1,\ldots,e_d\}$ is the canonical basis of
$\R^d$ and the strength of the bonds is independent of the direction: $\alpha_{e_j}=\alpha$, $\beta_{\e_j}=\beta$. In that case we are mixing two types of connections in a cubic lattice. A simplified  description of the two-dimensional setting for nearest neighbour interactions can be found in \cite{BK}.

The first step in the homogenization method is to consider all possible $\varphi$ in the periodic-bond setting; that is, when $i\mapsto c_{i,\xi}$ are periodic, in which case $\varphi$ is $x$-independent.
We denote such $\varphi$, extended to $\R^d$ by positive homogeneity of degree one, as the {\em homogenized surface tension} of the system $\{c_{i,\xi}\}$. The description of such $\varphi$ with fixed {\em volume fraction} (proportion) $\theta$ of $\beta$-type bonds is what is usually referred to as a {\em G-closure} problem, with a terminology borrowed from elliptic homogenization \cite{Allaire,Milton}.
We show that all possible such $\varphi$ are the (positively homogeneous of degree one) symmetric convex functions
such that
\begin{equation}\label{bobo}
\sum_{\xi \in V}\alpha_\xi |\langle \nu,\xi\rangle|\le \varphi(\nu)\le \sum_{\xi \in V} (\beta_\xi\theta_\xi +(1-\theta_\xi) \alpha_\xi) |\langle \nu, \xi\rangle|,
\end{equation}
where the $\theta_\xi \in [0,1]$ are the {\em volume fraction} of the interaction coefficients which account only for points $i$ interacting with points  $i+\xi$, and satisfy
\begin{equation*}
\frac{1}{\# V}\sum_{\xi \in V} \theta_\xi =\theta.
\end{equation*}
Note that, strictly speaking, such a description makes sense only for $\theta$ a rational number. We denote by ${\bf H}(\theta)$ the family of all $\varphi$ as above satisfying (\ref{bobo}). If $\theta$ is not a rational number, then the elements of ${\bf H}(\theta)$ are regarded as approximated by elements of ${\bf H}(\theta_h)$ with $\theta_h\to\theta$ and rational.

In dimension two we may compare this $G$-closure problem with a
continuous analog on curves in $\R^2$, which consists in the determination of optimal bounds
for Finsler metrics  obtained from the homogenization of periodic Riemannian
metrics (see \cite{AcBu,BDF,BBF}) of the form
$$
\int_a^b a\Bigl({u(t)\over\e}\Bigr)|u'|^2dt,
$$
and $a(u)$ is a periodic function in $\R^d$ taking only the values $\alpha$ and $\beta$. Even though curves and boundaries of sets have some topological differences, it has been shown in \cite{BP}, that in the periodic setting the homogenized energy densities can be computed by optimal paths (curves) on the dual lattice.
The problem on curves has been studied in \cite{DP}, where it is shown that homogenized metrics
satisfy
$$
\alpha|\nu|\le \varphi(\nu)\le (\theta\beta+(1-\theta)\alpha)|\nu|,
$$
but the optimality of such bounds is not proved. That result provides bounds also for the `dual' equivalent formulation in dimension $2$ of the homogenization of 
periodic perimeter functionals of the form
$$
\int_{\partial A}a\Bigl({x\over\e}\Bigr) \,d{\cal H}^{d-1}(x)
$$
with the same type of $a$ as above (see \cite{AB,AI}). The corresponding $\varphi$ in this case 
can be interpreted as the surface tension of the homogenized perimeter functional.

The discrete setting allows to give a (relatively) easy description of the optimal bounds in
a way similar to the treatment of mixtures of linearly elastic discrete structures \cite{BF}.
The bounds obtained by sections and by averages in the elastic case have as counterpart 
{\em bounds by projection}, where the homogenized surface tension is estimated from below by
considering the minimal value of the coefficient on each section, and {\em bounds by averaging},
where coefficients on a section are substituted with their average. The discrete setting allows to
construct (almost-)optimal periodic geometries, which optimize one type or the other of the bounds in every direction $\xi$  at the same time. Since the constructions of optimal geometries for fixed $\xi$ may overlap, some extra care must be used to make sure that they are compatible. This is done by a separation of scales argument. 

The homogenization method is completed by proving that we may always  locally reduce to 
the case of periodic coefficients. More precisely, we note that, up to subsequences, in the general non-periodic setting of energies as in ({\ref{etre}),
we may define a {\em continuum local volume fraction} $\theta=\theta(x)$ describing the local percentage of
$\beta$-type bonds, as the average of the densities $\theta_\xi(x)$ of the weak$^*$-limit of the measures
$$
\mu^\xi_\e= \sum_{i: c^\e_{i,\xi}=\beta_\xi} \delta_{\e i}.
$$
We then prove a {\em localization principle}, similar to the one for quadratic gradient energies in the Sobolev space setting stated by Dal Maso and Kohn (see \cite{R,B-handbook}). In our case,
this amounts to proving that all $\varphi$ that we may obtain in (\ref{equattro}) are exactly those such that, upon suitably choosing their representative, 
\begin{equation}\label{eotto}
\varphi(x,\cdot)\in {\bf H}(\theta(x))
\end{equation}
for almost all $x$. Conversely, every lower-semicontinuous energy $F$ as in (\ref{equattro}) with a surface energy density $\varphi$ such that (\ref{eotto}) holds for almost every $x$ is the $\Gamma$-limit of an Ising energy with coefficients $\{c^\e_{i,\xi}\}$ with continuum local volume fraction $\theta$. This localization result turns out much more complex than the one in the elliptic case both because surface energies are not characterized by a single cell problem formula and  because their values must be characterized along $d-1$-hypersurfaces for ${\mathcal H}^{d-1}$-almost all values of $x$.

\medskip

The paper is organized as follows: in Section 2 we fix some notation and introduce the general setting of the problem. In Section 3 we prove Theorem \ref{BoundsTheorem}, which shows the optimality of the bounds in the periodic case. The proof holds with a direct construction when the target energy density is in a dense class of crystalline energy densities, and it is proved by approximation in the general case. It is interesting to note that, in order to recover a system of discrete interactions, it is convenient to interpret homogenized surface energy densities in a $W^{1,1}$ setting, where the extension gives a convex integrand. In Section 4 we prove the localization principle, which is subdivided in Theorems \ref{Localization 2} and \ref{Localization 1}. In their proof we make use of representation and blow-up arguments. In particular, in order to recover (\ref{eotto}) we use the results in \cite{BFM}, which provide a blow-up formula for the limit energy density at all points.

\section{Notation and Setting of the Problem}
\subsection{Preliminaries}
In what follows $\Omega$ will denote a bounded open set of $\mathbb{R}^d$ with Lipschitz boundary. We denote by $\mathcal{A}(\Omega)$ the set of all open subsets contained in $\Omega$. Given $T \subset \mathbb{R}$ we define for fixed $\varepsilon > 0$ the set of functions
\begin{align*}
\mathcal{PC}_\varepsilon(\Omega,T) : = \{ u : \varepsilon \mathbb{Z}^d \cap \Omega \to  T\}.
\end{align*}
We omit the dependence on $T$ when $T=\mathbb{R} $, i.e.~$\mathcal{PC}_\varepsilon(\Omega)=\mathcal{PC}_\varepsilon(\Omega,\mathbb{R})$ as well as $\varepsilon=1$, i.e.~$\mathcal{PC}_1(\Omega,T) = \mathcal{PC}(\Omega,T)$.
In order to carry on our analysis it is convenient to regard $\mathcal{PC}_\varepsilon(\Omega,\{\pm 1\}) $ as a subset of $L^1(\Omega)$. To this end we will identify a function  $u \in \mathcal{PC}_\varepsilon(\Omega,\{\pm 1\}) $ with its piecewise-constant interpolation on the $\varepsilon$-cubes centered in the lattice, still denoted by $u$. More precisely, we set $u(z) = 0$ if $z \in \varepsilon \mathbb{Z}^d \setminus \Omega$ and $u(x) = u(z^\varepsilon_x)$, where $z_x^\varepsilon$ is the closest point in $\varepsilon \mathbb{Z}^d$ to $x$ (which is uniquely defined up to a set of zero measure). Other similar interpolations could be taken into account, actually not affecting our asymptotic analysis. Moreover, setting $H^\pm_\nu(x) = \{y \in \mathbb{R}^d : \pm \langle y-x ,\nu\rangle >0\}$ and omitting the dependence on $x$ if $x=0$, we define
\begin{align*}
u_{x,\nu}(z) =\begin{cases}
1 & x \in H^+_\nu(x)\\
-1& x \in H^-_\nu(x).
\end{cases}
\end{align*}
We set $\Pi_\nu(x)=\{ y \in \mathbb{R}^d : \langle y-x,\nu\rangle =0\}$. and we set $\Pi_\nu^\xi(x)=\{y \in \mathbb{R}^d : 0 \leq \langle y-x,\nu\rangle < \langle \xi,\nu\rangle\}$.
For $R > 0 $ we denote by $B_R(x) =\{ y \in \mathbb{R}^d : |y-x| < R\}$ the open ball with radius $R$ centered in $x$ and we omit the dependence on $x$, when $x=0$, i.e.~$B_R(0)=B_R$. Furthermore we set $B_{R,\nu}^{\pm} = H^\pm_\nu \cap B_R $ We denote with $w_{d-1}$ the $d-1$-dimensional measure of the $d-1$-dimensional unit ball. 
$Q$ denotes the $d$-dimensional unit open cube centered at $0$, $Q=\{ x\in \mathbb{R}^d : |\langle x, e_i \rangle |< \frac{1}{2}, \text{ for all } i \in \{1,\cdots,d\}\}$, whereas we denote by $Q(x_0)$ the cube centered at $x_0$, i.e.~$Q(x_0)=x_0 +Q$. Let $R_\nu\in SO(d)$ be a rotation such that $R(e_n)=\nu$. We denote by $Q^\nu= \{R_\nu(x) : x \in Q\}$ the unit cube with sides either parallel or orthogonal to $\nu$, whereas $Q^\nu(x)= x_0 + Q^\nu$ the unit cube centered at $x_0$ with sides either parallel or orthogonal to $\nu$ and $Q^\nu_\rho(x)= \rho Q^\nu + x_0$ the cube centered at $x_0$ with side lengths $\rho$ and sides either parallel or orthogonal to $\nu$. Given $A$ open bounded with lipschitz boundary, $u \in BV(A)$ we set $\mathrm{tr}(u) \in L^1(\partial(A)) $ the inner trace of the function $u$ on the boundary of $A$.   We say that $\nu \in S^{d-1}$ is rational if there exists $\lambda \in \mathbb{R}$ such that $\lambda \nu \in \mathbb{Z}^d$.
\subsection{Setting of the Problem}
 Let $V\subset \mathbb{Z}^d$ be a finite set containing the standard orthonormal basis $\{e_j\}^d_{j =1}$. We consider a discrete system of long-range interactions with coefficients $c_{i,\xi}\geq 0, i \in \mathbb{Z}^d,\xi \in V$,  The corresponding ferromagnetic spin energy is
\begin{align}\label{1}
E(u) = \frac{1}{4} \sum_{i \in \mathbb{Z}^d}\sum_{\xi \in V} c_{i,\xi}(u_i-u_{i+\xi})^2,
\end{align}
where $u : \mathbb{Z}^d \to \{\pm 1\}, u_i = u(i)$, where $\frac{1}{4}$ is a normalization factor. Such energies correspond to inhomogeneous surface energies in the continuum.
\begin{definition}\label{macrophi}
Let $\{c_{i,\xi}\}, i \in \mathbb{Z}^d,\xi \in V $ be coefficients as above with 
\begin{align*}
\inf_{i \in \mathbb{Z}^d, j \in \{1,\cdots,d\}} c_{i,e_j} >0.
\end{align*}
Then we define the \em{macroscopic energy density} of $\{c_{i,\xi}\} $  as $\varphi : \mathbb{R}^d \times \mathbb{R}^d \to [0,+\infty)$ such that for all $x \in \mathbb{R}^d$, $\varphi(x,\cdot) $ is positively one homogeneous of degree one and for all $\nu \in S^{d-1}, x \in \mathbb{R}^d$ we have
\begin{align}
\begin{split}
\varphi(x,\nu)= \limsup_{R \to +\infty} \frac{1}{4w_{d-1}R^{d-1}} \inf \bigg\{  \sum_{i \in \mathbb{Z}^d \cap B_R(x)}\sum_{\xi \in V} c_{i,\xi}(u_i-&u_{i+\xi})^2 : u \in \mathcal{PC}(\mathbb{R}^d,\{\pm 1\}),\\ &u(i) = u_{x,\nu}(i), i \notin B_R(x)\bigg\}
\end{split}
\end{align}

\end{definition}
\begin{remark}\label{gamma} \rm
The definition above can be interpreted in terms of a passage from a discrete to a continuum description as follows. We consider the scaled energies on $\Omega $
\begin{align*}
E_\varepsilon(u) = \frac{1}{4} \sum_{i,i+\xi \in \Omega_\varepsilon}\sum_{\xi \in V} \varepsilon^{d-1}c_{i,\xi}^\varepsilon(u_{\varepsilon i}-u_{\varepsilon(i+\xi)})^2
\end{align*}
where $u : \varepsilon\mathbb{Z}^d \cap \Omega \to \{\pm1 \}$ and $\Omega_\varepsilon=\mathbb{Z}^d \cap (\frac{1}{\varepsilon}\Omega)$. Identifying $u$ with its piecewise constant interpolation $u \in \mathcal{PC}_\varepsilon(\Omega)$, we can regard this energies defined on $L^1(\Omega)$. Their $\Gamma$-limit in that space is finite only on $BV(\Omega,\{\pm 1\})$, where it has the form
\begin{align*}
E_\varphi(u) = \int_{\partial^*\{u=1\} \cap \Omega} \varphi(x,\nu_u(x))\mathrm{d}\mathcal{H}^{d-1}
\end{align*}
with $\varphi$ as above  (\cite{AS},\cite{BP}).
\end{remark}
\section{The Periodic Case}
In this section we will consider the case where $\{c_{i,\xi}\}$ is periodic, i.e.~there exists $T \in \mathbb{N}$ such that for all $i \in \mathbb{Z}^d, \xi \in V$ we have
\begin{align*}
c_{(i+Te_j),\xi} = c_{i,\xi} \text{ for all } j \in \{1,\cdots,d\}
\end{align*}
 and for $\alpha = (\alpha_\xi)_{\xi \in V}, \beta = (\beta_\xi)_{\xi \in V}$
\begin{align}
c_{i,\xi} \in \{\alpha_\xi,\beta_\xi\} \text{ with } 0 < \alpha_\xi < \beta_\xi;
\end{align}
\begin{remark}\rm Let $\{c_{i,\xi}\}, i \in \mathbb{Z}^d,\xi \in V$ be periodic coefficients. Then the {\em macroscopic energy density} of $\{c_{i,\xi}\}, i \in \mathbb{Z}^d,\xi \in V$ reduces to a {\em homogenized energy density} of $\{c_{i,\xi}\}, i \in \mathbb{Z}^d,\xi \in V$ namely a convex positively homogeneous function of degree one $\varphi:\R^d\to[0,+\infty)$ such that for all $\nu\in S^{d-1}$ we have
\begin{equation}\label{phi}
\begin{split}
\varphi(\nu)= \lim_{R \to +\infty} \frac{1}{4w_{d-1}R^{d-1}} \inf \Big\{  \sum_{i \in \mathbb{Z}^d \cap B_R}\sum_{\xi \in V} c_{i,\xi}(u_i-u_{i+\xi})^2 : &u \in \mathcal{PC}(\mathbb{R}^d,\{\pm 1\}),\\ &u(i) = u_{0,\nu}(i), i \notin B_R\Big\}
\end{split}
\end{equation}
This is true thanks to \cite{AS}.
\end{remark} 
If we have such coefficients, we define the {\em volume fraction} of $\beta_\xi$-bonds and the total volume fraction, respectively, as
\begin{equation} \label{theta}
\begin{split}
&\theta_\xi(\{c_{i,\xi}\}) = \frac{1}{T^d}\#\{ i \in \mathbb{Z}^d : i \in [0,T)^d,c_{i,\xi}=\beta_\xi\}, \\
&\theta(\{c_{i,\xi}\})= \frac{1}{\#V} \sum_{\xi \in V} \theta_\xi(\{c_{i,\xi}\}).
\end{split}
\end{equation}
\begin{definition}
Let $\theta \in [0,1]$. The set of \em{homogenized energy densities of mixtures of $\alpha$ and $\beta$ bonds corresponding to $V$, with volume fraction $\theta$ (of $\beta$ bonds) } is defined as
\begin{equation}
\begin{split}
\textbf{H}_{\alpha,\beta,V}(\theta)=\{ &\varphi :\mathbb{R}^d \to [0,+\infty) : \text{ there exist } \theta^k \to \theta, \varphi^k \to \varphi \text{ and } \{c^k_{i,\xi}\}, \\ &\text{with } \theta(\{c^k_{i,\xi}\}) = \theta^k \text{ and } \varphi^k \text{ homogenized energy density of } \{c^k_{i,\xi}\}\}.
\end{split}
\end{equation}
\end{definition}
The following theorem completely characterizes the set $\textbf{H}_{\alpha,\beta,V}(\theta)$.
\begin{theorem}[Optimal bounds]\label{BoundsTheorem} The elements of the set $\textbf{H}_{\alpha,\beta,V}(\theta)$ are all the even, convex positively homogeneous functions of degree one $\varphi : \mathbb{R}^d \to [0,+\infty)$ such that
\begin{equation}
\sum_{\xi \in V} \alpha_\xi |\langle \nu,\xi\rangle| \leq \varphi(\nu) \leq \sum_{\xi \in V} (\theta_\xi \beta_\xi + (1-\theta_\xi)\alpha_\xi)|\langle \nu,\xi \rangle|
\end{equation}
for some $\theta_\xi \in [0,1] ,\xi \in V$ such that
\begin{equation} \label{Averagefraction}
\frac{1}{\# V}\sum_{\xi \in V} \theta_\xi = \theta.
\end{equation}
\end{theorem}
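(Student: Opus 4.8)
The plan is to prove the two inclusions separately. For the easy direction — that every $\varphi \in \textbf{H}_{\alpha,\beta,V}(\theta)$ satisfies the stated bounds and is even, convex and positively $1$-homogeneous — I would first note that convexity, evenness and $1$-homogeneity pass to limits, so it suffices to treat a fixed periodic system $\{c_{i,\xi}\}$ with volume fractions $\theta_\xi$. The lower bound $\varphi(\nu)\ge\sum_\xi\alpha_\xi|\langle\nu,\xi\rangle|$ is a \emph{bound by projection}: for any competitor $u$ in the cell formula \eqref{phi}, replace every coefficient $c_{i,\xi}$ by $\alpha_\xi$; the resulting energy is not larger, and the homogenized density of the constant system $\{\alpha_\xi\}$ is exactly $\sum_\xi\alpha_\xi|\langle\nu,\xi\rangle|$ (this is the standard computation that for a single interaction direction $\xi$ the optimal interface is flat and the energy counts the number of bonds crossed, giving $|\langle\nu,\xi\rangle|$ per unit area). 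The upper bound is a \emph{bound by averaging}: test the cell formula with the flat competitor $u_{0,\nu}$ itself; then only bonds $\xi$ crossing the hyperplane $\Pi_\nu$ contribute, the number of such bonds of strength $\beta_\xi$ versus $\alpha_\xi$ averages (over large $R$, using periodicity) to the fractions $\theta_\xi$ and $1-\theta_\xi$, and one gets $\varphi(\nu)\le\sum_\xi(\theta_\xi\beta_\xi+(1-\theta_\xi)\alpha_\xi)|\langle\nu,\xi\rangle|$. Averaging \eqref{theta} over $\xi$ gives \eqref{Averagefraction}.

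The substantial direction is the converse: given an even convex positively $1$-homogeneous $\varphi$ with $\sum_\xi\alpha_\xi|\langle\nu,\xi\rangle|\le\varphi(\nu)\le\sum_\xi(\theta_\xi\beta_\xi+(1-\theta_\xi)\alpha_\xi)|\langle\nu,\xi\rangle|$ and $\frac1{\#V}\sum_\xi\theta_\xi=\theta$, construct periodic coefficients realizing it (up to the approximation built into the definition of $\textbf{H}$). Following the remark in the introduction, I would first reduce to a dense class of target densities: by convexity $\varphi$ is a supremum of linear functions, and — reinterpreting surface tensions in the $W^{1,1}$/convex-integrand picture mentioned in Section 1 — one approximates $\varphi$ from below by \emph{crystalline} densities of the form $\max_k\langle\cdot,w_k\rangle$ (finitely many facets), or more conveniently by sums over $\xi\in V$ of one-dimensional profiles. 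For such a target one builds, \emph{separately for each direction $\xi\in V$}, a periodic arrangement of $\alpha_\xi$- and $\beta_\xi$-bonds with the prescribed fraction $\theta_\xi$ whose contribution to the homogenized density in direction $\xi$ interpolates between the projection bound and the averaging bound: a laminate (slabs of pure-$\beta$ bonds alternating with pure-$\alpha$ bonds, stacked orthogonally to some lattice direction) saturates the averaging bound in directions transverse to the lamination and the projection bound in the lamination direction, and by tuning one gets any intermediate even convex $1$-homogeneous profile between the two extremes. Then one superimposes these single-direction constructions on a common lattice.

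The main obstacle — and the point flagged explicitly in the introduction — is \textbf{compatibility of the per-$\xi$ constructions}: the optimal microgeometry for direction $\xi$ and the one for direction $\xi'$ live on the same $\mathbb Z^d$ and may conflict (a site $i$ cannot simultaneously sit in a $\beta_\xi$-slab whose geometry is dictated by the $\xi$-construction and in an $\alpha_\xi$-region forced by the $\xi'$-construction, if those constructions were designed independently at incompatible scales). This is resolved by a \textbf{separation of scales} argument: one nests the constructions, letting direction $\xi$ be resolved at period $T_\xi$ with $T_1\ll T_2\ll\cdots$ chosen so that within each coarse cell of the $\xi'$-pattern the finer $\xi$-pattern has already equilibrated to its effective density; since each $c_{i,\xi}$ depends only on the arrangement relevant to the \emph{same} direction $\xi$ (the energy decouples across $\xi$), choosing the coefficient in direction $\xi$ by the fine pattern and accepting only a vanishing error from boundary layers between scales makes the total homogenized density converge to $\sum_\xi(\text{profile}_\xi)(\langle\nu,\xi\rangle)$, i.e.\ to the target. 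The residual gap between the crystalline/separable approximants and a general $\varphi$ in the admissible set is then closed by the density argument and the built-in $\varphi^k\to\varphi$, $\theta^k\to\theta$ in the definition of $\textbf{H}_{\alpha,\beta,V}(\theta)$, together with lower semicontinuity of $\Gamma$-limits and the compactness theorem of \cite{AS} guaranteeing the limiting density is again of the surface form \eqref{equattro}. I would expect the careful bookkeeping of the scale hierarchy and the estimate of the interface error between scales to be the technically heaviest part of the argument.
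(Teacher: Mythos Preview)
Your treatment of the easy direction is essentially right, with one correction: testing the cell formula with the single flat profile $u_{0,\nu}$ does \emph{not} in general see the bonds in proportion $\theta_\xi,1-\theta_\xi$ (think of a period-$2$ system where all $\xi$-bonds in even layers are $\beta_\xi$ and in odd layers $\alpha_\xi$, and $\nu$ parallel to the layers). The paper fixes this by averaging over the $T^d$ translates $u_{j,\nu}$, $j\in[0,T)^d\cap\Z^d$, and picking the best one; this is Proposition~\ref{BbA}.

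The genuine gap is in the hard direction. Your per-$\xi$ laminate strategy can only ever produce homogenized densities of the separable form $\varphi(\nu)=\sum_{\xi\in V} c_\xi|\langle\nu,\xi\rangle|$. Indeed, if you decompose $\varphi=\sum_\xi\varphi_\xi$ with each $\varphi_\xi$ coming from the $\xi$-bonds alone, then $\varphi_\xi$ is even, convex, $1$-homogeneous and satisfies $\alpha_\xi|\langle\nu,\xi\rangle|\le\varphi_\xi(\nu)\le c_\xi|\langle\nu,\xi\rangle|$; a short convexity argument forces $\varphi_\xi(\nu)=a_\xi|\langle\nu,\xi\rangle|$ for a constant $a_\xi$. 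But the admissible set contains many non-separable $\varphi$ (e.g.\ in $d=2$ with $V=\{e_1,e_2\}$ take $\varphi(\nu)=\alpha(|\nu_1|+|\nu_2|)+\epsilon|\nu|$ for small $\epsilon$). Relatedly, your description of the ``compatibility'' obstruction is off: the coefficients $c_{i,\xi}$ and $c_{i,\xi'}$ are independent variables, so there is no clash across directions; the overlap the paper worries about is \emph{within} a fixed $\xi$, between the placements needed to saturate the projection bound and those needed to make the flat test $u_{v/|v|}$ see the right value for every $v\in V$ simultaneously (this is condition~\eqref{card} in Proposition~\ref{Special}).

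The paper's route to a general $\varphi$ is quite different from what you sketch. After reducing to crystalline $\varphi=\sum_k c_k|\langle\cdot,\nu_k\rangle|$ with \emph{arbitrary} rational facet normals $\nu_k$ (not restricted to $V$), it builds a \emph{continuum} periodic perimeter functional whose density $f(y,\nu)$ equals $\varphi(\nu_k)$ on the hyperplanes $\Pi_{\nu_k}+\Z^d$ and equals the upper-bound density elsewhere; this homogenizes exactly to $\varphi$ because an interface with normal $\nu_k$ can sit on $\Pi_{\nu_k}$ and pay $\varphi(\nu_k)$ (Step~2 of Proposition~\ref{Gammaclosure}). Each region carries a density of the separable form, so it can be discretized using Proposition~\ref{Special}; the ``separation of scales'' in the paper is the nested hierarchy $n\gg 1/\eta\gg 1/\delta\gg 1/\varepsilon$ of these approximation layers, not a hierarchy across directions $\xi$. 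Finally, membership in $\textbf{H}_{\alpha,\beta,V}(\theta)$ requires not just $\Gamma$-convergence of the discrete energies to $E_\varphi$ but convergence of the \emph{homogenized densities} $\varphi_\varepsilon\to\varphi$; the paper extracts this via the auxiliary positively $1$-homogeneous functionals $F_\varepsilon$ on $BV$ and a $W^{1,1}$-type periodic cell formula (Propositions~\ref{CellFormula}--\ref{Liminf minima} and Theorem~\ref{General}). Your proposal does not address this last step either.
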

Note that the lower bound for functions in $\textbf{H}_{\alpha,\beta,V}(\theta)$ is independent of $\beta$. This follows by a comparison argument from \cite{ABC} in the case for nearest-neighbour spin systems; the more refined argument  in Proposition \ref{BbP} will give the optimality of the lower bound in the general case. 

In the case $\theta =1$ we have all functions satisfying the trivial bounds
\begin{equation} \label{tri}
\sum_{\xi \in V} \alpha_\xi |\langle \nu,\xi\rangle| \leq \varphi(\nu) \leq \sum_{\xi \in V} \beta_\xi |\langle \nu,\xi \rangle|
\end{equation}
This is due to the fact that in that case by considering $\theta^k \to 1$ we allow a vanishing volume fraction of $\alpha$ bonds, which is nevertheless sufficient to allow for all such $\varphi$.

\begin{figure}[h!]
\centerline{\includegraphics [width=3.67in]{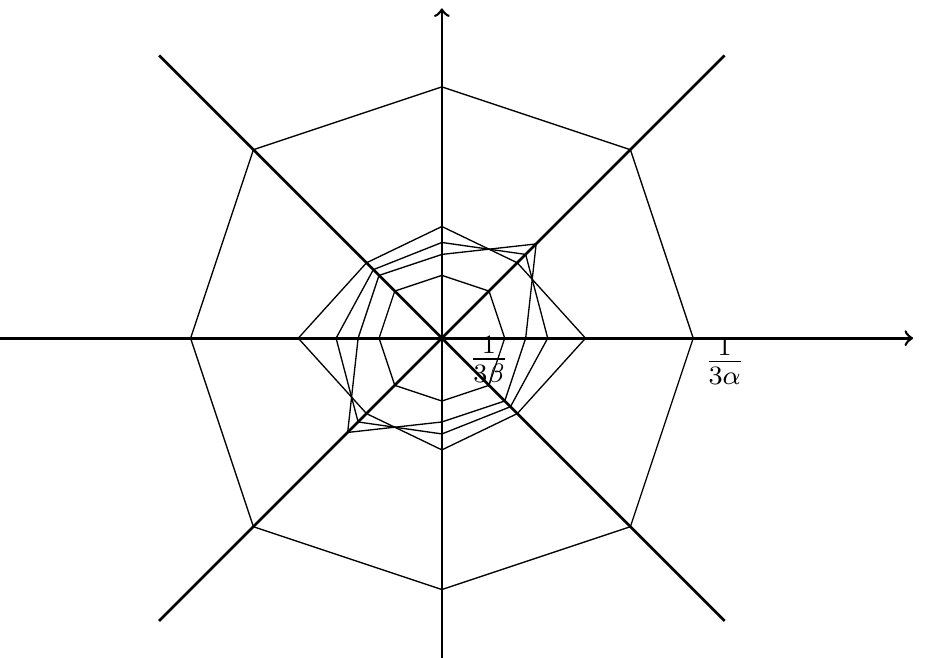}\includegraphics [width=3.5in]{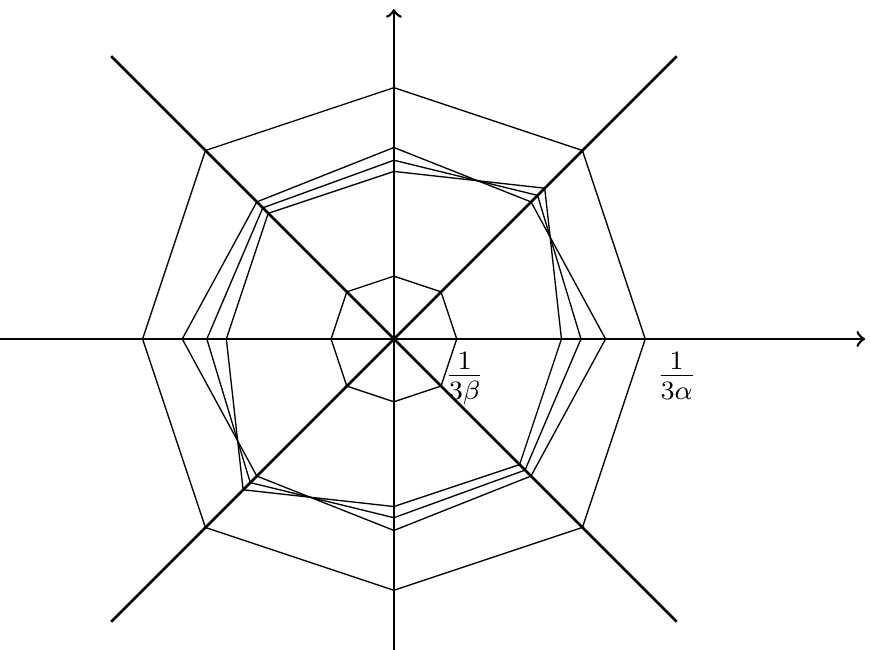}}
\caption{Level sets of $\varphi$ in the case $\theta=\frac{1}{2}$ and $\theta < \frac{1}{2}$, respectively}\label{f1}
   \end{figure}

\begin{example}\rm
We consider the two-dimensional case with nearest and next-to-nearest neighbour interactions; i.e., choosing $V= \{e_1,e_2,e_1+e_2=:v_1,e_1-e_2=:v_2\}$. Theorem \ref{BoundsTheorem} states that the homogenized densities $\varphi$ in $\textbf{H}_{\alpha,\beta,V}(\theta)$ 
have to satisfy 
\begin{align*}
\alpha (|\nu_1| +|\nu_2| + |\langle \nu, v_1\rangle| +|\langle \nu, v_2\rangle| \leq \varphi(\nu) \leq  c_{1,1}|\nu_1| + c_{1,2}|\nu_2| + c_{2,1}|\langle \nu, v_1\rangle| +c_{2,2}|\langle \nu, v_2\rangle|,
\end{align*}
where
\begin{align*}
c_{1,1}+c_{1,2}= \theta_1 \beta + (1-\theta_1)\alpha,\qquad c_{2,1}+c_{2,2}= \theta_2 \beta + (1-\theta_2)\alpha,
\end{align*}
and $\theta_1 +\theta_2=2\theta$.  This is pictured in terms of the sublevelsets $\{\varphi \leq 1\}$ on the left-hand side of Fig.~\ref{f1} for $\theta\ge\frac{1}{2}$ and on the right-hand side of Fig.~\ref{f1} for $\theta < \frac{1}{2}$. Such sublevelsets are convex and even.
Note that by the trivial bounds such sublevelsets have always to be contained in the sublevelset of $\varphi$ with all the coefficients equal to $\alpha$  and contain always the sublevelset of $\varphi$ with all the coefficients equal to $\beta$. Those are the large and the small regular octagons in the figures. Further, since the $\varphi$ have to satisfy the upper bounds, they have to contain at least one of the even octagons whose vertices lie on the straight lines with coordinates 
\begin{align*}
&x_1=\pm \frac{1}{2(\theta_1+\theta_{2,1})(\beta-\alpha)+4\alpha},&&x_2=x_1 \\
&x_1=\pm\frac{1}{(2\theta_2+\theta_{1,1})(\beta-\alpha)+3\alpha}, &&x_2=0\\
&x_1=0,&&x_2=\pm\frac{1}{(2\theta_2+\theta_{1,2})(\beta-\alpha)+3\alpha} \\
&x_1=\pm\frac{1}{2(\theta_1+\theta_{2,2})(\beta-\alpha)+4\alpha}, &&x_2=-x_1,\\
\end{align*}
where $\theta_{1,1}, \theta_{1,2}$ are the vertical and horizontal volume fractions of the nearest-neighbor bonds satisfying $ \theta_{1,1}+\theta_{1,2}=2\theta_1$ and $\theta_{2,1},\theta_{2,2}$ are the volume fractions corresponding to $v_2$ and $v_1$ respectively and satisfying  $ \theta_{2,1}+\theta_{2,2}=2\theta_2$.
Those are represented by the thick lines.
\end{example}

\subsection{Derivation of Bounds}
We now derive the bounds of Theorem \ref{BoundsTheorem} by using the following proposition.
\begin{proposition}[Bounds by averaging]\label{BbA} Let $\varphi$ be the \rm{ homogenized energy density of $\{c_{i,\xi}\}$} as  in (\ref{phi}); then we have
\begin{equation}
\varphi(\nu) \leq \sum_{\xi \in V} (\theta_\xi \beta_\xi + (1-\theta_\xi)\alpha_\xi) |\langle \nu, \xi\rangle|.
\end{equation}
\end{proposition}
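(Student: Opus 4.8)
The plan is to establish this \emph{bound by averaging} by testing the minimum problem (\ref{phi}) with flat half-space configurations and then averaging the resulting energies over one period cell, so that by periodicity the coefficients $c_{i,\xi}$ get replaced by their cell averages $\bar c_\xi:=\theta_\xi\beta_\xi+(1-\theta_\xi)\alpha_\xi$, which is exactly the right-hand side of the claimed inequality. It is convenient to write $E'(u;B_R):=\sum_{i\in\mathbb Z^d\cap B_R}\sum_{\xi\in V}c_{i,\xi}(u_i-u_{i+\xi})^2$ and $m_R:=\inf\{E'(u;B_R):u=u_{0,\nu}\text{ on }\mathbb Z^d\setminus B_R\}$, so that $\varphi(\nu)=\lim_{R\to\infty}m_R/(4w_{d-1}R^{d-1})$ by (\ref{phi}); I also set $D_\xi:=\{j\in\mathbb Z^d:u_{0,\nu}(j)\ne u_{0,\nu}(j+\xi)\}$, the set of lattice points whose $\xi$-bond crosses the hyperplane $\Pi_\nu$.

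First I would fix $\nu$ and, for each $z\in\{0,\dots,T-1\}^d$, use the translated half-space function $u_{z,\nu}$ (which satisfies $u_{z,\nu}(i)=u_{0,\nu}(i-z)$) as a near-competitor for $m_R$. Since $u_{z,\nu}$ and $u_{0,\nu}$ differ only on the slab $S_z$ between the parallel hyperplanes $\Pi_\nu(0)$ and $\Pi_\nu(z)$, of width at most $|z|\le\sqrt d\,T$, a boundary-layer modification makes it admissible: put $\tilde u^{(z)}:=u_{0,\nu}$ on $S_z\setminus B_{R-C}$ and $\tilde u^{(z)}:=u_{z,\nu}$ elsewhere, where $C=C(d,T,V)$ exceeds $\sqrt d\,T+\max_{\xi\in V}|\xi|$. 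Then $\tilde u^{(z)}=u_{0,\nu}$ outside $B_R$ (outside $S_z$ the two half-space functions already coincide), while $\tilde u^{(z)}$ agrees with $u_{z,\nu}$ except on $S_z\setminus B_{R-C}$, a set which --- together with its bounded interaction neighbourhood and intersected with $B_R$ --- is the intersection of a slab of bounded width with a spherical shell of bounded thickness, hence meets only $O(R^{d-2})$ lattice points. Consequently
\[
m_R\ \le\ E'(\tilde u^{(z)};B_R)\ \le\ E'(u_{z,\nu};B_R)+C'R^{d-2},
\]
with $C'$ uniform in $z$ and $R$.

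Next I would average this inequality over $z\in\{0,\dots,T-1\}^d$; using $u_{z,\nu}(i)=u_{0,\nu}(i-z)$ and the substitution $j=i-z$ in each $E'(u_{z,\nu};B_R)$ one obtains
\[
m_R\ \le\ \frac1{T^d}\sum_{z\in\{0,\dots,T-1\}^d}E'(u_{z,\nu};B_R)+C'R^{d-2}\ =\ 4\sum_{\xi\in V}\sum_{j\in D_\xi}\gamma_\xi(j)+C'R^{d-2},
\]
where $\gamma_\xi(j):=\frac1{T^d}\sum_{z:\,j+z\in B_R}c_{j+z,\xi}$. By $T$-periodicity and the definition (\ref{theta}) of $\theta_\xi$, one has $\gamma_\xi(j)=\bar c_\xi$ whenever $j+\{0,\dots,T-1\}^d\subset B_R$ --- in particular for $j\in B_{R-C}$ --- while always $0\le\gamma_\xi(j)\le\beta_\xi$. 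The set $D_\xi$ consists precisely of the lattice points lying in a fixed translate of the strip $\Pi_\nu^\xi$, a slab orthogonal to $\nu$ of width $|\langle\nu,\xi\rangle|$; a standard lattice-point count (the number of points of $\mathbb Z^d\cap B_R$ in such a slab being asymptotic to its volume) gives $\#(D_\xi\cap B_R)=w_{d-1}|\langle\nu,\xi\rangle|R^{d-1}+o(R^{d-1})$, and only $O(R^{d-2})$ of the $j\in D_\xi$ entering the above sum lie outside $B_{R-C}$. Splitting the inner sum accordingly --- $\gamma_\xi=\bar c_\xi$ on $B_{R-C}$, $\gamma_\xi\le\beta_\xi$ on the $O(R^{d-2})$ remaining points --- and summing over the finite set $V$ yields $m_R\le 4w_{d-1}R^{d-1}\sum_{\xi\in V}\bar c_\xi|\langle\nu,\xi\rangle|+o(R^{d-1})$; dividing by $4w_{d-1}R^{d-1}$ and letting $R\to\infty$ gives $\varphi(\nu)\le\sum_{\xi\in V}(\theta_\xi\beta_\xi+(1-\theta_\xi)\alpha_\xi)|\langle\nu,\xi\rangle|$, as asserted.

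The routine ingredients are the two asymptotic estimates in the last paragraph; the lattice-point count is cleanest for irrational $\nu$ (each value $\langle j,\nu\rangle$ then being attained by at most one $j$), from which the general case follows by continuity of $\varphi$ and of $\nu\mapsto\sum_{\xi\in V}\bar c_\xi|\langle\nu,\xi\rangle|$. The step I expect to require the most care is the boundary-layer construction: the correction must be routed through the bounded-width slab $S_z$ and not introduced as a hard interface along the sphere $\partial B_R$, since the latter would cost $O(R^{d-1})$ --- the order of the main term --- whereas confining it to $S_z$ keeps it at the negligible order $O(R^{d-2})$.
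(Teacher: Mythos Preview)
Your proof is correct and follows essentially the same averaging-over-a-period-cell strategy as the paper: test with translated half-space configurations $u_{z,\nu}$, average over $z\in\{0,\dots,T-1\}^d$, and use periodicity to replace $c_{i,\xi}$ by $\bar c_\xi$. The only cosmetic differences are that the paper selects the single best translate $j_0$ (whose energy is at most the average) rather than averaging the admissibility bounds directly, and that it handles the boundary mismatch more tersely --- your concern about routing the correction through the slab is in fact unnecessary, since the paper's hard cut at $\partial B_R$ also costs only $O(R^{d-2})$, the two half-space functions differing only on a slab of bounded width.
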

\begin{proof}
The proof is obtained by constructing a suitable competitor $u \in \mathcal{PC}(\mathbb{R}^d,\{\pm 1\})$, $u(i) = u_\nu(i), i \notin B_R$ in the definition of (\ref{phi}). To that end define for $j \in [0,T)^d \cap \mathbb{Z}^d$  $u_j \in \mathcal{PC}(\mathbb{R}^d,\{\pm 1\})$, $u(i) = u_\nu(i), i \notin B_R$ by
\begin{align*}
u_j(i) =\begin{cases}
u_\nu(i) &\text{if } i \notin B_R\\
u_{j,\nu}(i) &\text{if } i \in B_R.
\end{cases}
\end{align*}
Let $j_0 \in [0,T)^d \cap \mathbb{Z}^d$ be such that
\begin{align*}
\frac{1}{4}\sum_{\xi \in V} \sum_{i \in \mathbb{Z}^d \cap B_R} c_{i,\xi}((u_{j_0})_i-(u_{j_0})_{i+\xi})^2\leq \frac{1}{T^d}\frac{1}{4}\sum_{j \in [0,T)^d\cap \mathbb{Z}^d}\sum_{\xi \in V} \sum_{i \in \mathbb{Z}^d \cap B_R} c_{i,\xi}((u_{j})_i-(u_{j})_{i+\xi})^2.
\end{align*}
we have
\begin{align*}
&\frac{1}{T^d}\frac{1}{4}\sum_{j \in [0,T)^d\cap \mathbb{Z}^d}\sum_{\xi \in V} \sum_{i \in \mathbb{Z}^d \cap B_R} c_{i,\xi}((u_{j})_i-(u_{j})_{i+\xi})^2\\ =&\frac{1}{T^d}\sum_{j \in [0,T)^d\cap \mathbb{Z}^d}\sum_{\xi \in V} \sum_{i \in \Pi_\nu^\xi(j) \cap B_R \cap \mathbb{Z}^d} c_{i,\xi}\\ = &\frac{1}{T^d}\sum_{\xi \in V} \sum_{i \in [0,T)^d \cap \mathbb{Z}^d}\sum_{j \in [0,T)^d \cap \mathbb{Z}^d}c_{i,\xi} \# \{ i' \in (T\mathbb{Z}^d + i) \cap \Pi_\nu^\xi(j) \cap B_R\} \\\leq &\frac{1}{T^d}\sum_{\xi \in V} \sum_{i \in [0,T)^d \cap \mathbb{Z}^d} c_{i,\xi} w_{d-1}R^{d-1} |\langle \nu,\xi\rangle| + o(R^{d-1}) \\=&\sum_{\xi \in V} (\theta_\xi \beta_\xi + (1-\theta_\xi)\alpha_\xi)|\langle \nu,\xi\rangle| + o(R^{d-1}).
\end{align*}
The last equality follows from splitting the sum into the two sets where $c_{i,\xi} = \alpha_\xi $ or $\beta_\xi$ respectively.
Since $u_{j_0}$ is admissible in the definition of (\ref{phi}), dividing by $R^{d-1}$ and letting $R \to \infty$ yields the claim.
\end{proof}
Proposition \ref{BbA} together with the trivial bound from below gives the bounds in the statement of Theorem \ref{BoundsTheorem}. in the following section we prove their optimality.
\subsection{Optimality of Bounds}
We introduce some notation for the this section. Let $\Xi=\{ \xi_1,\cdots,\xi_d\} \subset \mathbb{Z}^d$ be an orthogonal basis and let $z \in \mathbb{Z}^d$ and let
\begin{align*}
\mathcal{L}_z(\Xi) := \Big\{ i \in \mathbb{R}^d : i = z + \sum_{k=1}^d \lambda_k \xi_k; \lambda_k \in \mathbb{Z}\Big\}, \quad \mathcal{L}_z^T (\Xi)= z + T\mathcal{L}_0(\Xi)
\end{align*}
and for $j=1,\cdots,d$ we set
\begin{align*}
\mathcal{L}_{z,j}(\Xi) := \Big\{ i \in \mathbb{R}^d : i = z + \sum_{k=1}^d \lambda_k \xi_k; \lambda_k \in \mathbb{Z}, \lambda_j =0\Big\}, \quad
\mathcal{L}_{z,j}^T(\Xi) = z + T\mathcal{L}_{0,j}(\Xi)
\end{align*}
the projection of the lattice into the plane orthogonal to $\xi_j$. We set 
\begin{align*}
&P_z(\Xi) :=\Big\{i \in \mathbb{R}^d : i =z + \sum^d_{j=1}\lambda_j \xi_j; \lambda_j \in [0,1)\Big\}
\end{align*}
the fundamental parallelepiped spanned by the vectors $\Xi$ translated to the point $z$, and
\begin{align*}
P_z^T(\Xi) :=\Big\{i \in \mathbb{R}^d : i =z + T\sum^d_{j=1}\lambda_j \xi_j; \lambda_j \in [0,1)\Big\}.
\end{align*}

 Set for $\{c_{i,\xi_j}\}_{i \in \mathcal{L}_z(\Xi), j \in \{1,\cdots,d\}}$ with $c_{i,\xi_j} \geq 0$ and $T$-periodic, i.e.~$c_{i+T\xi_j,\xi_k}=c_{i,\xi_k}$ for all $j,k \in \{1,\cdots,d\}$
\begin{equation} \label{Latticephi}
\begin{split}
\varphi_{\Xi,z}(\nu)= \limsup_{R \to +\infty} \frac{1}{4w_{d-1}R^{d-1}} \inf \Big\{  &\sum_{i \in \mathcal{L}_z(\Xi) \cap B_R}\sum^d_{j=1} c_{i,\xi_j}(u_i-u_{i+\xi_j})^2;  \\& u : \mathcal{L}_z(\Xi) \to \{\pm 1\} ,u(i) = u_{0,\nu}(i), i \notin B_R\Big\}
\end{split}
\end{equation} 
(we omit the dependence on $\{c_{i,\xi_j}\}$). By regrouping the interactions $\{c_{i,\xi}\}$  of energy (\ref{1})  on sublattices $\mathcal{L}_{z,j}(\Xi)$ we will use (\ref{Latticephi}) to obtain a lower bound in (\ref{phi}).
(Note that possibly one has to set $c_{i,\xi_j}=0$ for some $j \in \{1,\cdots,d\}$ if $\xi_j \notin V$). 

Note that if $c_{i,\xi}$ is $T$-periodic along the coordinate directions, then for every $\eta \in \mathbb{Z}^d$ there exists $T'=T'_\eta$ such that $c_{i+T'\eta,\xi}=c_{i,\xi}$. 
 \begin{proposition}[Bounds by projection] \label{BbP} Let $\Xi=\{ \xi_1,\cdots,\xi_d\}$ be an orthogonal basis, $z \in \mathbb{Z}^d$ and $\{c_{i,\xi_j}\}_{i \in \mathcal{L}_z(\Xi), j\in \{1,\cdots,d\}} $ be non-negative coefficients. Let $\varphi_{\Xi,z} : \mathbb{R}^d \to [0,+\infty) $ be the even convex positively homogeneous function of degree one given by (\ref{Latticephi}), then 
\begin{equation}
\varphi_{\Xi,z}(\nu) \geq \sum^d_{j=1} c^p_j | \langle\nu,\xi_j \rangle|
\end{equation}
where 
\begin{equation}
c^p_j= \frac{1}{T^{d-1}|P_z(\Xi)|}\sum_{k \in \mathcal{L}_{z,j}(\Xi)\cap P_z^T(\Xi)} \min \{ c_{i,\xi_j} : i-k = \lambda \xi_j \text{ for some } \lambda \in \mathbb{Z} \}
\end{equation}
(the letter $p$ in $c^p_j$ stands for {\em projection}).
\end{proposition}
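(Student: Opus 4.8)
The goal is to bound $\varphi_{\Xi,z}(\nu)$ from below by the Finsler-type functional with the \emph{projected} coefficients $c^p_j$. The strategy is the standard ``slicing'' or ``projection'' argument: on the dual lattice generated by $\Xi$, the energy of any configuration $u$ is a sum of contributions over one-dimensional ``fibers'' parallel to each $\xi_j$, and the number of times any admissible $u$ (agreeing with $u_{0,\nu}$ outside $B_R$) must jump along a given fiber in direction $\xi_j$ is at least one (it has to pass from $-1$ to $+1$). Hence on each such fiber the energy is at least the \emph{minimum} of $c_{i,\xi_j}$ over that fiber, which is exactly the quantity appearing in the definition of $c^p_j$. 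Summing over all fibers in a fixed direction and over the fibers' base points in $\mathcal{L}_{z,j}(\Xi)$, and then letting $R\to\infty$, will produce $\sum_j c^p_j|\langle\nu,\xi_j\rangle|$.

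\textbf{Key steps.} First, fix $R$ and an admissible $u$, and fix a direction index $j$. Decompose $\mathcal{L}_z(\Xi)$ into one-dimensional fibers $k + \mathbb{Z}\xi_j$, indexed by $k$ in (a fundamental domain of) the projected lattice $\mathcal{L}_{z,j}(\Xi)$. Second, on each fiber intersected with $B_R$, count jumps of $u$: since $u=u_{0,\nu}$ outside $B_R$, and $u_{0,\nu}$ takes value $+1$ on one side of $\Pi_\nu$ and $-1$ on the other, for any fiber whose two ends (outside $B_R$) lie on opposite sides of $\Pi_\nu$, the restriction of $u$ to that fiber changes sign at least once, contributing at least one term $c_{i,\xi_j}(u_i-u_{i+\xi_j})^2 = 4 c_{i,\xi_j}$ for some bond $i$ on the fiber; this is bounded below by $4\min\{c_{i,\xi_j}: i\in k+\mathbb{Z}\xi_j\}$. (For a periodic pattern the min over the infinite fiber equals the min over one period, so it is well-defined and equals the quantity in $c^p_j$; this uses $T'_{\xi_j}$-periodicity along $\xi_j$ noted just before the statement.) Third, count the fibers in direction $\xi_j$ meeting $B_R$ and separating the two half-spaces: this number is asymptotically $w_{d-1}R^{d-1}|\langle\nu,\xi_j\rangle|$ divided by the density of fibers, i.e.\ by $T^{d-1}|P_z(\Xi)|/($area per base point$)$ — more precisely the number of base points $k$ in $\mathcal{L}_{z,j}(\Xi)$ within the relevant slab is $\sim w_{d-1}R^{d-1}|\langle\nu,\xi_j\rangle| / \big(T^{d-1}|P_z(\Xi)|\big)^{-1}$ after grouping $k$ by its class modulo $T$, matching exactly the normalization in the definition of $c^p_j$. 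Fourth, sum over $j\in\{1,\dots,d\}$: since the fibers in different directions use disjoint bonds (each bond $(i,\xi_j)$ belongs to a unique direction), the total energy is at least the sum of the per-direction lower bounds. Fifth, divide by $4w_{d-1}R^{d-1}$, take $\limsup_{R\to\infty}$, and use that the error terms are $o(R^{d-1})$ (boundary-layer fibers near $\partial B_R$ or near $\Pi_\nu$), to conclude $\varphi_{\Xi,z}(\nu)\ge\sum_j c^p_j|\langle\nu,\xi_j\rangle|$. That $\varphi_{\Xi,z}$ is even, convex and positively $1$-homogeneous is inherited from the general theory (cf.\ \cite{AS}) and the symmetry of the energy.

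\textbf{Main obstacle.} The delicate point is the precise asymptotic count of ``separating fibers'' in a fixed direction $\xi_j$ and reconciling it with the normalization constant $\frac{1}{T^{d-1}|P_z(\Xi)|}$ appearing in $c^p_j$: one must group base points $k\in\mathcal{L}_{z,j}(\Xi)$ according to their residue modulo $T\mathcal{L}_{0,j}(\Xi)$ (so that the min-over-fiber depends only on the residue), then count how many translates of each residue class fall in the slab $\{x: 0\le\langle x,\nu\rangle<\langle \xi_j,\nu\rangle\}\cap B_R$ projected appropriately. The number of such $k$ is, to leading order, the $(d-1)$-volume of the relevant region divided by the covolume of the sublattice $T\mathcal{L}_{0,j}(\Xi)$ inside $\Pi_\nu$-like slices; keeping track of how the $|\langle\nu,\xi_j\rangle|$ factor arises (a fiber parallel to $\xi_j$ crosses the slab of width $|\langle\xi_j,\nu\rangle|$ exactly once, and the flux of fibers through $B_R\cap\Pi_\nu$ scales like $w_{d-1}R^{d-1}$) is the bookkeeping that must be done carefully. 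A secondary technical point is handling fibers that only partially intersect $B_R$ or whose endpoints are near $\Pi_\nu$: these form a negligible $o(R^{d-1})$ family and can be discarded, but one should state this cleanly. Everything else is a routine rearrangement of the double sum, entirely parallel to the ``bounds by sections'' arguments in the linearly elastic setting of \cite{BF}.
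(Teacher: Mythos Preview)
Your proposal is correct and follows essentially the same approach as the paper's proof: decompose the lattice into one-dimensional fibers parallel to each $\xi_j$, observe that any admissible $u$ must jump at least once on every fiber whose line meets both $B_{R,\nu}^+$ and $B_{R,\nu}^-$ (the paper's set $I_j$), bound each such contribution below by the periodic minimum along the fiber, group fibers by their residue in $\mathcal{L}_{z,j}(\Xi)\cap P_z^T(\Xi)$, and perform the asymptotic count via projection onto the hyperplane orthogonal to $\xi_j$. The counting step you flag as the ``main obstacle'' is exactly where the paper spends its effort, computing $\#(T\mathcal{L}_{k,j}\cap I_j)$ as the ratio of the projected area of $\Pi_\nu\cap B_R$ to the covolume $T^{d-1}\prod_{i\neq j}\|\xi_i\|$, which yields the factor $|\langle\nu,\xi_j\rangle|/(T^{d-1}|P_z(\Xi)|)$ matching $c^p_j$.
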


\begin{proof} Let $u : \mathcal{L}_z \to \{ \pm1\} $ be such that $u(i)= u_\nu(i) $ for all $i \notin B_R$. Set for $j =1,\cdots, N$
\begin{align*}
I_j := \Big\{ i \in \mathcal{L}_{z,j}(\Xi) : \{i + t\xi_j : t \in \mathbb{R}\} \cap B_{R,\nu}^{\pm} \neq \emptyset\Big\}.
\end{align*} 
Noting that for all $k \in I_j$ there exists at least $i \in \{k + \lambda \xi : \lambda \in \mathbb{Z}\} \cap B_R$,  such that $u_i \neq u_{i+\xi}$, we have
\begin{align*}
&\frac{1}{4}\sum_{i \in \mathcal{L}_z \cap B_R}\sum^d_{j=1} c_{i,\xi_j}(u_i-u_{i+\xi_j})^2 \geq \sum_{j=1}^d\sum_{k \in I_j } \min \{ c_{i,\xi_j} : i-k = \lambda \xi_j \text{ for some } \lambda \in \mathbb{Z}\} \\&\geq \sum_{j=1}^d\sum_{k \in \mathcal{L}_{z,j}(\Xi) \cap P_z^T(\Xi)}\min \{ c_{i,\xi_j} : i-k = \lambda \xi_j \text{ for some } \lambda \in \mathbb{Z}\} \#  (T\mathcal{L}_{k,j}(\Xi)\cap I_j) \\&\geq \sum_{j=1}^d\sum_{k \in \mathcal{L}_{z,j}(\Xi) \cap P_z^T(\Xi)}\min \{ c_{i,\xi_j} : i-k = \lambda \xi_j \text{ for some } \lambda \in \mathbb{Z}\} \frac{|\langle \nu,\xi_j\rangle|}{|P_{z,j}(\Xi)|}\frac{w_{d-1}R^{d-1}}{T^{d-1}} +o(R^{d-1}).
\end{align*}
Where we have used the fact that
\begin{align*}
\# ( T\mathcal{L}_{k,j} \cap I_j) &= \frac{\mathcal{H}^{d-1}\Bigl((\Pi_\nu \cap B_R) \text{ projected onto } z + \Pi_{\frac{\xi_j}{||\xi_i||}}\Bigr)}{\mathcal{H}^{d-1}\Bigl(P_z^T(\Xi) \cap \Big(z+ \Pi_{\frac{\xi_j}{||xi_j||}}\Big)\Bigr)}\\&= \frac{\frac{1}{||\xi_j||}|\langle \nu,\xi_j\rangle|}{\underset{i \neq j}{\Pi^{d}_{i=1}||\xi_i||}}\frac{w_{d-1}R^{d-1}}{T^{d-1}} +o(R^{d-1}).
\end{align*}
Taking the infimum over $u : \mathcal{L}_z \to \{\pm1\}$ such that $u(i)=u_\nu(i), i \notin B_R$, dividing by $w_{d-1}R^{d-1}$ and letting $R \to \infty$ yields the claim.
\end{proof}
We will now use Proposition \ref{BbP} to prove the optimality of bounds.
First we deal with a special case, from which the general result will be deduced by
approximation.
\begin{proposition}\label{Special} Let $V$ be as in Theorem \ref{BoundsTheorem} and 
\begin{align*}
\psi(\nu) = \sum_{\xi \in V} c_\xi |\langle \nu, \xi\rangle|
\end{align*}
with $\alpha_\xi \leq c_\xi \leq \beta_\xi, \xi \in V$ such that
\begin{align*}
c_\xi=t_\xi \beta_\xi + (1-t_\xi)\alpha_\xi \leq \theta_\xi \beta_\xi + (1-\theta_\xi)\alpha_\xi
\end{align*}
where $\theta_\xi,t_\xi \in (0,1) \cap \mathbb{Q}, \xi \in V$.
 Then there exist $T\in \mathbb{N}$ and $\{c_{i,\xi}\}$ $T$-periodic with $\theta_\xi(\{c_{i,\xi}\})=\theta_\xi$ and $\psi$ is  the {\em homogenized energy density} of $\{c_{i,\xi}\}$. In particular if $\theta$ satisfies (\ref{Averagefraction}) with $\theta_\xi$ then $\psi \in \textbf{H}_{\alpha,\beta,V}(\theta)$.
\end{proposition}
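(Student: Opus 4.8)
The plan is to construct, for each $\xi\in V$ separately, a periodic arrangement of $\alpha_\xi$- and $\beta_\xi$-bonds in the direction $\xi$ whose homogenized contribution along $\xi$ equals exactly $c_\xi|\langle\nu,\xi\rangle|$, realizing simultaneously the projection bound (Proposition \ref{BbP}) from below and the averaging bound (Proposition \ref{BbA}) from above, and then to superimpose these one-direction constructions using a separation-of-scales argument so that they do not interfere. Fix $\xi\in V$ and complete it to an orthogonal basis $\Xi^\xi=\{\xi,\eta_2,\dots,\eta_d\}\subset\mathbb Z^d$ (possible since $\xi\in\mathbb Z^d$). Since $t_\xi\in(0,1)\cap\mathbb Q$, write $t_\xi=p/q$; I will place, periodically along each line $k+\mathbb Z\xi$ with $k$ in the transverse lattice, a pattern of bonds that is $q$-periodic along $\xi$ and takes the value $\beta_\xi$ on a $p$-fraction of the bonds and $\alpha_\xi$ on the rest. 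The key point is to choose \emph{the same} pattern on every such line, and to \emph{phase-shift} it so that on any hyperplane transverse to $\xi$ the minimum over the line of $c_{i,\xi}$ is $\alpha_\xi$ (so Proposition \ref{BbP} gives $c^p_\xi=\alpha_\xi$ — wait, that is too small).

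Let me correct the construction: to make the projection bound yield $c_\xi$ rather than $\alpha_\xi$, the right device is the standard one of \emph{long blocks}. Along the direction $\xi$, on a very long period $T$, one arranges the bonds so that a \emph{full transverse slab} of thickness a $t_\xi$-fraction of $T$ consists entirely of $\beta_\xi$-bonds and the complementary slab of thickness $(1-t_\xi)T$ consists entirely of $\alpha_\xi$-bonds; i.e.\ $c_{i,\xi}=\beta_\xi$ if $\langle i,\xi\rangle/|\xi|^2 \bmod T \in [0,t_\xi T)$ and $\alpha_\xi$ otherwise. For this layered geometry the minimum of $c_{i,\xi}$ over a line in direction $\xi$ through a generic transverse point is $\alpha_\xi$ — so the crude projection bound of Proposition \ref{BbP} still only gives $\alpha_\xi$. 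Hence the projection bound alone is \emph{not} what forces $c_\xi$ from below in the layered construction; instead, the layered construction is exactly the one matching the \emph{averaging} upper bound, and one must read off the matching lower bound by a direct argument: any interface $\{u=1\}$ must cross the $\beta_\xi$-slab, and on the portion of the interface lying in that slab the density of $\xi$-interactions it cuts is $\beta_\xi|\langle\nu,\xi\rangle|$, while on the $\alpha_\xi$-slab it is $\alpha_\xi|\langle\nu,\xi\rangle|$; averaging over the optimal position of the slab (equivalently, using that a minimizing $u_{x,\nu}$ can be translated so its jump set sits in the cheap slab only when $|\langle\nu,\xi\rangle|$ is small, but for generic $\nu$ the interface must traverse both) yields $\varphi(\nu)\ge (t_\xi\beta_\xi+(1-t_\xi)\alpha_\xi)|\langle\nu,\xi\rangle| = c_\xi|\langle\nu,\xi\rangle|$. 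Summing over $\xi\in V$ and invoking Proposition \ref{BbP} applied to each orthogonal sublattice $\mathcal L_{z,j}(\Xi^\xi)$ to get the full lower bound $\sum_\xi c_\xi|\langle\nu,\xi\rangle|$, together with Proposition \ref{BbA} for the upper bound, gives $\varphi=\psi$.

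The compatibility issue — the $\xi$-layered pattern for one $\xi$ and the $\xi'$-layered pattern for another may want to assign conflicting values, but they don't, since the value $c_{i,\xi}$ in direction $\xi$ is chosen independently of $c_{i,\xi'}$; the only genuine constraint is that each $i\mapsto c_{i,\xi}$ be $T$-periodic along all coordinate directions. This is arranged by taking $T$ to be a common multiple of all the individual periods $T_\xi$ and of the denominators appearing in the $t_\xi$, and by using the observation recorded before Proposition \ref{BbP} that a coefficient field periodic along the coordinate directions is automatically periodic (with a possibly larger period $T'_\eta$) along any $\eta\in\mathbb Z^d$, so a $\xi$-layered pattern transverse to $\xi$ can be made genuinely $T$-periodic along $e_1,\dots,e_d$ after replacing $T$ by a suitable multiple. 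One then checks $\theta_\xi(\{c_{i,\xi}\}) = t_\xi \le \theta_\xi$; if the inequality is strict we redistribute, turning an extra $(\theta_\xi-t_\xi)$-fraction of $\alpha_\xi$-bonds into $\beta_\xi$-bonds inside a negligible (in the homogenized limit) sub-pattern — e.g.\ an isolated periodic sprinkling of $\beta_\xi$-bonds of vanishing relative density as the separation-of-scales parameter is sent to its limit — which does not lower $\varphi$ below $\psi$ (monotonicity: more $\beta$ only increases energy) and does not raise it above $\psi$ (the sprinkled bonds contribute a vanishing fraction to any interface). Finally, if the target volume fraction $\theta$ in \eqref{Averagefraction} is achieved by these $\theta_\xi$, then by definition $\psi\in\textbf H_{\alpha,\beta,V}(\theta)$.

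The main obstacle I expect is the \emph{matching lower bound} for the layered construction: Proposition \ref{BbP} as stated is genuinely too weak (it only sees $\min$ along lines, hence $\alpha_\xi$), so one needs a separate, slightly delicate argument — essentially a slicing/averaging estimate showing that an interface with normal $\nu$ and $|\langle\nu,\xi\rangle|>0$ cannot avoid the expensive slab over a fraction larger than what its total "budget" allows, made precise by integrating the one-dimensional count of sign changes over the transverse variable and using the $BV$/coarea structure. A secondary technical point is making the separation-of-scales bookkeeping rigorous: one introduces a large integer parameter $N$, builds the $\xi$-patterns on blocks of size $N$ inside a super-period of size $N^d$ (say) so that boundary-layer mismatches between the $\#V$ different patterns occupy a volume fraction $O(1/N)$, proves the estimates with an $o_N(1)$ error, and lets $N\to\infty$ along the approximating sequence $\varphi^k\to\varphi$, $\theta^k\to\theta$ in the definition of $\textbf H_{\alpha,\beta,V}(\theta)$.
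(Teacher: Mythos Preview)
Your layered construction does not realize $\psi$. Take $\nu=\xi/|\xi|$: the flat interface $\{u_{0,\nu}=1\}$ can be translated along $\xi$ so that every $\xi$-bond it cuts lies in the $\alpha_\xi$-slab, and the $\xi$-contribution to the homogenized density at this $\nu$ is then $\alpha_\xi|\langle\nu,\xi\rangle|$, strictly below $c_\xi|\langle\nu,\xi\rangle|$. Your ``direct argument'' that any interface must cross the $\beta_\xi$-slab fails precisely here, since an interface with normal $\xi$ is parallel to the slab boundaries and can sit entirely in the cheap region. No slicing/averaging estimate can repair this: for the layered field the homogenized $\xi$-contribution in direction $\xi$ genuinely equals $\alpha_\xi|\xi|$. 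Layering along $\xi$ is the geometry that saturates the \emph{lower} bound in that direction, not the averaging bound.

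The paper's construction is transverse, and is designed so that Proposition~\ref{BbP} returns $c_\xi$ rather than $\alpha_\xi$. One partitions not along $\xi$ but across it: a $t_\xi$-fraction of the \emph{lines} $k+\mathbb Z\xi$, indexed by $k$ in the transverse lattice $\mathcal L_{z,d}(\Xi)\cap P_z^T(\Xi)$, carry $c_{i,\xi}=\beta_\xi$ at every point; on each of the remaining $(1-t_\xi)$-fraction of lines one places $\alpha_\xi$-bonds only at the finitely many positions (per period) where the test interfaces $u_{v/|v|}$, $v\in V$, cross that line (the sets $A^z_{v/|v|}(T,\Xi)$). Then the per-line minimum appearing in the projection bound is $\beta_\xi$ on the first family of lines and $\alpha_\xi$ on the second, so $c^p_\xi=t_\xi\beta_\xi+(1-t_\xi)\alpha_\xi=c_\xi$ exactly; and testing with each $u_{v/|v|}$ hits precisely the $\alpha_\xi$-bond on the cheap lines and a $\beta_\xi$-bond on the expensive ones, matching the upper bound. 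Because each cheap line carries only $O(1)$ forced $\alpha_\xi$-bonds per period while the period is $T$, taking $T$ large (condition~(\ref{card})) leaves enough unspecified bonds to adjust the volume fraction up to the prescribed $\theta_\xi\ge t_\xi$ without affecting either bound. Your first attempt was close in spirit but put the \emph{same} mixed pattern on every line; the point is to put the pure $\beta_\xi$-pattern on a $t_\xi$-fraction of the lines and mix only in the transverse variable, so that the projection bound sees the arithmetic mean of $\beta_\xi$ and $\alpha_\xi$ rather than their minimum.
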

\begin{proof}
We construct $\{c_{i,\xi}\}$ with some period $T \in \mathbb{N}$ and 
\begin{align*}
\theta_\xi(\{c_{i,\xi}\})= \theta_\xi \text{ for all } \xi \in V
\end{align*}
by defining the bonds separately for each direction of interaction $\xi \in V$. Note that if we construct $T_\xi$-periodic coefficients for each $\xi \in V$ there exists a common $T \in \mathbb{N}$ such that the coefficients $\{c_{i,\xi}\}$ are $T$-periodic. 

For $\xi \in V$, let $\Xi=\{ \xi_1,\cdots,\xi_d=\xi\} \subset \mathbb{Z}^d$ be an orthogonal basis and $z \in \mathbb{Z}^d \cap P_0(\Xi)$.   Set for $T \in \mathbb{N}$, $\nu = \frac{v}{||v||}$ for some $v \in V$ and $\Xi$ an orthogonal basis
\begin{align*}
A_{\nu}^z(T,\Xi):=\Big\{ i \in \mathcal{L}_{z}(\Xi) : \{i+t\xi : t\in [0,1)\} \cap \bigcup_{j \in \mathcal{L}^T_0(\Xi)} (j + \Pi_\nu) \neq \emptyset \Big\}.
\end{align*}
This is the minimal $T$-periodic set of points in the lattice $\mathcal{L}_z(\Xi)$ interacting in direction $\xi$ when we use $u_{\nu}$ as a test function.
Note that for $i \in \mathcal{L}_{z,d}(\Xi) $ we have  
\begin{align*}
\#\Big(A^z_{\nu}(T,\Xi) \cap \{i +t\xi : t\in \mathbb{R}\} \cap P_z^T(\Xi) \Big) \leq C(\nu,\xi)
\end{align*} 
for all $\langle\nu,\xi \rangle \neq 0$, $\nu \in S^{d-1}$ rational, $\xi \in \mathbb{Z}^d $.
In fact, if $ \nu $ is rational and $\Xi$ is the standard orthonormal basis we have that for $\{\nu,\nu_1,\ldots,\nu_{d-1}\}$ with $\nu_i \in \mathbb{Z}^d$ (which can be chosen since $\nu$ is rational) one can choose  $C(\nu,\xi)\leq \Pi_{i=1}^{d-1} ||\nu_i||_1$ and the general case can be reduced to this one by a change of coordinate which preserves the rationality of $\nu$.
Choose $T \in \mathbb{N}$ such that $T^{d}\theta_\xi \in \mathbb{N} $, $T^{d-1}(1-t_\xi)=N_\xi \in \mathbb{N}$ and
\begin{align} \label{card}
(1-t_\xi)\underset{\langle v, \xi\rangle \neq 0}{\sum_{v\in V}} C\Big(\frac{v}{||v||},\xi\Big) \leq  T(1-\theta_\xi)
\end{align}  
 for all $\xi \in V$.
Choose  $ A_\xi \subset \mathcal{L}_{z,d}(\Xi) \cap P_z^T(\Xi)  $ such that $\# A_\xi = N_\xi$.
 We define
\begin{align*}
c_{i,\xi} = \begin{cases} \alpha_\xi &i = \lambda\xi +i' ,\lambda \in \mathbb{Z}, i' \in A_\xi, i \in A^z_{\nu}(T,\Xi) \\
\beta_\xi &i = \lambda\xi +i' ,\lambda \in \mathbb{Z}, i' \in(\mathcal{L}_{z,d}(\Xi) \cap P_z^T(\Xi))\setminus A_\xi 
\end{cases}
\end{align*}
and any choice of $\alpha_\xi$ and $\beta_\xi$ for other indices $i$ only subjected to the total constraint that $\theta_\xi(\{c_{i,\xi}\})=\theta_\xi$, which is possible, since due to (\ref{card}) it holds 
\begin{align*}
\#\Big\{ i = \lambda\xi +i' ,\lambda \in \mathbb{Z}, i' \in A_\xi, i \in A^z_{\nu}(T,\Xi) \Big\}&\leq N_\xi \underset{\langle v, \xi \rangle \neq 0}{\sum_{v \in V}} C\Big(\frac{v}{||v||},\xi\Big) \\&= T^{d-1}(1-t_\xi) \underset{\langle v, \xi \rangle \neq 0}{\sum_{v \in V}}C\Big(\frac{v}{||v||},\xi\Big) \leq (1-\theta_\xi)T^d
\end{align*}
and
\begin{align*}
\#\Big\{i = \lambda\xi +i' ,\lambda \in \mathbb{Z}, i' \in(\mathcal{L}_{z,d}(\Xi) \cap P_z^T(\Xi))\setminus A_\xi \Big\}= (T^{d-1}-N_\xi)T= t_\xi T^d \leq \theta_\xi T^d.
\end{align*}
With this choice of $c_{i,\xi}$ we have 
\begin{equation*}
\min\{c_{i,\xi} : i = \lambda \xi +k \text{ for some } \lambda \in \mathbb{Z}\} =\begin{cases} \alpha_\xi
&\text{if } k \in A_\xi\\
\beta_\xi &\text{if } k \in (\mathcal{L}_{z,d}(\Xi) \cap P_z^T(\Xi))\setminus A_\xi.
\end{cases}
\end{equation*}
Hence, Proposition \ref{BbP} yields that the homogenized energy density $\varphi$ of $\{c_{i,\xi}\}$ satisfies 
\begin{align*}
\varphi(\nu) &\geq \sum_{\xi \in V} \sum_{z \in P_z(\Xi)}\frac{1}{|P_0(\Xi)|} (t_\xi\beta_\xi +(1-t_\xi)\alpha_\xi)|\langle\nu,\xi\rangle| \\&\geq  \sum_{\xi \in V} (t_\xi\beta_\xi +(1-t_\xi)\alpha_\xi)|\langle\nu,\xi\rangle|=\psi(\nu)
\end{align*} 
as desired.
To give an upper bound let $v\in V$ and set $\nu = \frac{v}{||v||}\in S^{d-1}$. Testing (\ref{phi}) with $u_\nu$ we have that
\begin{align*}
\varphi(\nu) &\leq \lim_{R \to \infty} \frac{1}{w_{d-1}R^{d-1}}\sum_{\xi \in V}\sum_{i \in \mathbb{Z}^d \cap B_R} c_{i,\xi}((u_\nu)_i - (u_\nu)_{i+\xi})^2\\& \leq \sum_{\xi \in V} (t_\xi\beta_\xi +(1-t_\xi)\alpha_\xi)|\langle \nu,\xi\rangle|=\psi(\nu)
\end{align*}
and, since $\varphi$ is a convex positively homogeneous function of degree one and $\psi$ is the greatest convex positively homogeneous function $g$ of degree one  such that $g(\frac{v}{||v||})\leq \psi(\frac{v}{||v||})  $ for all $v \in V$, the desired equality.
\end{proof}
The next proposition shows, that for any $\varphi$ satisfying the bounds of Theorem \ref{BoundsTheorem} its associated surface energy $E_\varphi$ as in Remark \ref{gamma} is  the $\Gamma$-limit of energies of the type (\ref{1}) where we have that the period $T^\varepsilon$ of the interaction coefficients $ \{c^\varepsilon_{i,\xi}\} $ of the approximating energies goes to $+\infty$. In the proof of Theorem \ref{General} we use the existence of such an approximating sequence of energies to deduce the convergence of their homogenized energy densities $\varphi^\varepsilon$ to the limit energy density $\varphi$.
\begin{proposition}\label{Gammaclosure}
Let $\varphi : \mathbb{R}^d \to [0,+\infty)$ be convex, even, positively 1-homogeneous and such that
\begin{align}
\sum_{\xi\in V}\alpha_\xi|\langle \nu,\xi\rangle| \leq \varphi(\nu) \leq \sum_{\xi\in V}(\theta_\xi\beta_\xi+(1-\theta_\xi)\alpha_\xi)|\langle \nu,\xi\rangle|
\end{align}
with $\theta_\xi \in [0,1], \xi \in V$. Then there exist $\{c^\varepsilon_{i,\xi}\}$, $\frac{1}{\varepsilon}$-periodic such that $ \theta_\xi(\{c^\varepsilon_{i,\xi}\}) \to \theta_\xi $ for all $\xi \in V$ and $E_\varepsilon$  $\Gamma$-converges with respect to the strong $L^1(\Omega)$-topology to the functional $E_\varphi$ .
\end{proposition}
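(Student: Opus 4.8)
The plan is to combine the single-direction periodic constructions of Proposition~\ref{Special} with a double approximation and a diagonalisation; the bulk of the work is a separation-of-scales assembly.

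\textbf{Step 1 (reductions).} On bounded sets the strong $L^1(\Omega)$-topology is metrizable, so $\Gamma$-convergence along it is stable under diagonal extraction. Hence it suffices to produce the coefficients when $\varphi$ ranges over a class that is dense, for uniform convergence on $S^{d-1}$, among the densities satisfying the stated bounds, and then to pass to the limit: if $\varphi^h\to\varphi$ uniformly, if $\{c^{\varepsilon,h}_{i,\xi}\}$ has $\theta_\xi(\{c^{\varepsilon,h}_{i,\xi}\})\to\theta^h_\xi\to\theta_\xi$, and if $E^h_\varepsilon$ $\Gamma$-converges to $E_{\varphi^h}$ as $\varepsilon\to0$, then a diagonal choice $h=h(\varepsilon)$ works; the fact that the periods grow is not an obstruction, since one may always rescale an elementary cell so that its period is a submultiple of $1/\varepsilon$ while keeping the internal microstructure on a scale that still vanishes. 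Writing $\psi_{\min}(\nu)=\sum_\xi\alpha_\xi|\langle\nu,\xi\rangle|$ and $\psi_{\max}(\nu)=\sum_\xi(\theta_\xi\beta_\xi+(1-\theta_\xi)\alpha_\xi)|\langle\nu,\xi\rangle|$, I would use this to reduce to $\theta_\xi\in(0,1)\cap\mathbb Q$ and to $\varphi$ with $\{\varphi\le1\}$ a symmetric convex polytope with rational facet normals and $\psi_{\min}\le\varphi<\psi_{\max}$ (equivalently $\varphi(\nu)=\max_{k\le m}|\langle\nu,p_k\rangle|$ with $p_k\in\mathbb Q^d$), the borderline case $\varphi=\psi_{\max}$ being recovered by a further approximation from below.

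\textbf{Step 2 (construction for polyhedral $\varphi$).} This is the core. I would realise such a $\varphi$ by a single periodic structure obtained, via a separation-of-scales (iterated lamination) procedure, from $m$ elementary blocks, one per facet: for the facet with normal $p_k$, an adapted arrangement as in Proposition~\ref{Special} which is optimal in a cone of directions around $p_k/|p_k|$, inserted on a scale $\delta_k$ with $\delta_1\gg\dots\gg\delta_m\to0$. The direction-$\xi$ volume fractions of the assembled structure are then convex combinations of those of the blocks, and can be tuned so as to converge to any prescribed $\theta_\xi$ satisfying \eqref{Averagefraction}. For the homogenised density $\varphi^{\mathrm{per}}$ of the assembly: a flat competitor with normal in the $k$-th cone, suitably routed through the structure, feels essentially only the $k$-th block (the coarser blocks being crossed on a set of vanishing $\mathcal H^{d-1}$-measure), which gives the upper bound $\varphi^{\mathrm{per}}\le\varphi$; conversely, applying the projection estimate of Proposition~\ref{BbP} on the sublattice adapted to the orthogonal basis relevant for the given direction — the finer blocks changing the minimal line-coefficients only by $o(R^{d-1})$ thanks to the scale separation — gives $\varphi^{\mathrm{per}}\ge\varphi$, so that $\varphi^{\mathrm{per}}=\varphi$. (This is where the $W^{1,1}$ viewpoint mentioned in the introduction is convenient: building $\varphi$ amounts to assembling, scale by scale, a convex integrand, and Proposition~\ref{Special} furnishes the building blocks.) By Remark~\ref{gamma}, applied with these fixed coefficients as the lattice spacing tends to $0$, the associated rescaled energies $\Gamma$-converge to $E_\varphi$ in $L^1(\Omega)$.

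\textbf{Step 3 (conclusion and the obstacle).} A diagonal argument over the polyhedral approximation, the rational fractions and the periods then yields coefficients $\{c^\varepsilon_{i,\xi}\}$ that are $1/\varepsilon$-periodic, satisfy $\theta_\xi(\{c^\varepsilon_{i,\xi}\})\to\theta_\xi$, and are such that $E_\varepsilon$ $\Gamma$-converges to $E_\varphi$, as required. I expect the main obstacle to lie entirely in Step~2: the bond patterns that are optimal for the various $p_k$ overlap spatially, so one must check that arranging them on well-separated scales simultaneously preserves the matching \emph{lower} bound in \emph{every} direction (which forces the use of different sublattices, hence different applications of Proposition~\ref{BbP}, for different $\nu$ — unavoidable since the target $\varphi$ need not be crystalline) while keeping the total volume fractions under control; it is precisely the scale separation that reconciles these competing requirements, and making this quantitative, with all the $o(R^{d-1})$ errors controlled uniformly in the several scales and in $\varepsilon$, is the delicate part.
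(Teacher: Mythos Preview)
Your overall plan (reduce to rational polyhedral $\varphi$, assemble elementary periodic structures from Proposition~\ref{Special}, then diagonalise) matches the paper's, but the heart of your Step~2 differs from the paper's construction in a way that leaves a real gap.

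The paper does \emph{not} try to certify the lower bound $\varphi^{\mathrm{per}}\ge\varphi$ via Proposition~\ref{BbP}. Instead it passes through a continuum intermediate: it defines a $1$-periodic integrand $f(y,\nu)$ equal to $\varphi(\nu_k)$ on the hyperplanes $\Pi_{\nu_k}+\mathbb{Z}^d$ and to $\psi_{\max}$ elsewhere, invokes the perimeter-homogenisation result of \cite{AI} to obtain $f_{\mathrm{hom}}$, and shows $f_{\mathrm{hom}}=\varphi$. The lower bound there is soft: on $\partial^*\{u=1\}\cap(\Pi_{\nu_k}+\mathbb{Z}^d)$ one has $\nu_u=\pm\nu_k$ so $f(x,\nu_u)=\varphi(\nu_u)$, while off the hyperplanes $f\ge\varphi$; hence $\int f(x,\nu_u)\ge\int\varphi(\nu_u)\ge T^{d-1}\varphi(\nu)$ by BV-ellipticity (convexity of $\varphi$). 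Only \emph{after} this continuum identification does the paper discretise, thickening the hyperplanes to $\delta$-slabs with $\beta$-buffers and replacing each slab by a periodic coefficient field from Proposition~\ref{Special}; the matching of the discrete $\Gamma$-limit with $E_{f_{\eta,\delta}}$ is then done by a local blow-up, region by region, using only that each block $\Gamma$-converges to its own $E^k$.

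Your proposed route, by contrast, attempts the lower bound directly on the assembled discrete structure via Proposition~\ref{BbP}. This is where I see the gap. That proposition produces, for any fixed orthogonal integer basis $\Xi$, a bound of the form $\sum_j c^p_j|\langle\nu,\xi_j\rangle|$, i.e.\ the support function of a zonotope. Taking the maximum over finitely many bases still yields a function of this type, and there is no reason such a maximum should recover an arbitrary polyhedral $\varphi$ whose facet normals $p_k$ bear no relation to $V$ or to any orthogonal family in $\mathbb{Z}^d$. You flag exactly this as ``the main obstacle'' and propose that scale separation resolves it, but the mechanism by which separation of scales would manufacture the correct lower bound \emph{in every direction simultaneously} is not explained, and I do not see how BbP alone can do it. The paper sidesteps the whole issue by letting convexity (BV-ellipticity) do the work at the continuum level; you would need either to supply the missing argument or to route through the continuum as the paper does.

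A minor point: the $W^{1,1}$ viewpoint you invoke is used in the paper in the subsequent proof of Theorem~\ref{General} (to pass from $\Gamma$-convergence of the discrete energies to convergence of the homogenised densities $\varphi_\varepsilon\to\varphi$), not in the construction of Proposition~\ref{Gammaclosure} itself.
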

\begin{proof}
\underline{Step 1:} We may suppose that
\begin{align}\label{ineq0}
\sum_{\xi \in V} \alpha_\xi |\langle \nu, \xi\rangle| < \varphi(\nu) < \sum_{\xi \in V} c_\xi |\langle \nu, \xi\rangle| = \sum_{\xi \in V} ( \theta_\xi\beta_\xi + (1-\theta_\xi)\alpha_\xi)|\langle \nu, \xi\rangle| 
\end{align}
for some $\theta_\xi \in [0,1], \xi \in V $ satisfying (\ref{Averagefraction}) for some $\theta \in [0,1]$.
Indeed if we have equality in (\ref{ineq0}) we can find $\varphi_k$ satisfying (\ref{ineq0}) strictly and $\varphi_k \to \varphi$ monotonically. Hence by the monotone convergence theorem we have that $E_{\varphi_k}(u) \to E_\varphi(u)$ monotonically for all $u \in BV(\Omega,\{\pm 1\})$, therefore by [\cite{DM}, Proposition 5.4,5.7] we have that $E_{\varphi_k} \, \Gamma$-converge to $E_\varphi$.

 Moreover, we can assume that $\varphi$ is crystalline and the vertices of the set $\{\varphi \leq 1\}$ correspond to rational directions and contain the directions $V$, i.e.~there exists $N \in \mathbb{N}, N \geq \# V$ such that 
\begin{align}\label{crystalline}
\varphi(\nu) = \sum^N_{j=1}c_j |\langle \nu,\nu_j\rangle|,
\end{align}
 and for all $\xi \in V$ there exists $k \in \{1,\cdots, \# V\}$ such that $\lambda_k\nu_k =\xi$.
with $c_j \geq 0$.
Note that this is possible due to an approximation argument that still maintains the bound (\ref{ineq0}).

\medskip

\underline{Step 2:} For every $\varphi$ satisfying (\ref{crystalline}) and (\ref{ineq0}) the functionals $E_{\varepsilon} : BV(\Omega;\{\pm1\}) \to [0,+\infty)$ given by
\begin{align}\label{EepsN}
E_{\varepsilon}(u)= \int_{\partial^*\{u=1\}}f\Big(\frac{x}{\varepsilon},\nu_u(x)\Big)\mathrm{d}\mathcal{H}^{d-1}
\end{align}
where
\begin{align*}
f(y,\nu) = \begin{cases}\varphi(\nu_k) &\text{if } y \in \Pi_{\nu_k} +\mathbb{Z}^d, k =1,\cdots,N\\
\sum_{\xi \in V} c_\xi |\langle\nu,\xi\rangle| &\text{otherwise}
\end{cases}
\end{align*}
$\Gamma$-converge to $E_\varphi$.

In fact by \cite{AI} we have $E_{\varepsilon}$ $\Gamma$-converges as $\varepsilon \to 0$ to $E : BV(\Omega;\{\pm 1\}) \to [0,\infty)$ defined by
\begin{align*}
E(u) = \int_{\partial^* \{u=1\}} f_{\mathrm{hom}}(\nu_u(x))\mathrm{d}\mathcal{H}^{d-1}
\end{align*}
where
\begin{align*}
f_{\mathrm{hom}}(\nu) = \lim_{T \to \infty} \frac{1}{T^{d-1}}\inf \big\{ \int_{\partial^*\{u=1\} \cap TQ_\nu}f(x,\nu_u(x))\mathrm{d}\mathcal{H}^{d-1}: u \in &BV(TQ_\nu;\{\pm1\}), \\&u = u_\nu \text{ on } \partial TQ_\nu\big\}.
\end{align*}
We now prove that $f_{\mathrm{hom}}(\nu) = \varphi(\nu)$. 

We first show that $f_{\mathrm{hom}}(\nu)\leq \varphi(\nu)$. Let $u \in BV(TQ,\{\pm 1\})$ be such that $u=u_\nu$ on $\partial TQ_\nu$ and define $A_j = \Pi_{\nu_j} + \mathbb{Z}^d$. We know that $\nu_j = \pm \nu_u \, \mathcal{H}^{d-1}$-a.e. on $\partial^* \{u=1\} \cap A_j$ and $\varphi(\nu) = \varphi(-\nu)$. We get
\begin{align*}
&\int_{\partial^* \{u=1\} \cap TQ_\nu} f(x,\nu_u(x))\mathrm{d}\mathcal{H}^{d-1}(x)= \\ =&\sum^N_{j =1} \int_{\partial^* \{u=1\} \cap A_j \cap TQ_\nu} f(x,\nu_u(x))\mathrm{d}\mathcal{H}^{d-1}(x) + \int_{(\partial^* \{u=1\} \setminus A_j) \cap TQ_\nu} f(x,\nu_u(x))\mathrm{d}\mathcal{H}^{d-1}(x)  \\
\geq &\sum_{j=1}^N \int_{\partial^* \{u=1\} \cap A_j \cap TQ_\nu} \varphi(\nu_u(x))\mathrm{d}\mathcal{H}^{d-1}(x) + \int_{(\partial^* \{u=1\} \setminus A_j) \cap TQ_\nu} \varphi(\nu_u(x))\mathrm{d}\mathcal{H}^{d-1}(x) \\
=&\int_{\partial^* \{u=1\} \cap TQ_\nu} \varphi(\nu_u(x))\mathrm{d}\mathcal{H}^{d-1}(x) \geq T^{d-1} \varphi(\nu),
\end{align*} 
where the last inequality follows from BV-ellipticity (see \cite{AB}) and a rescaling argument. Thus by the definition of $f^N_{hom}$ we get
$
f_{\mathrm{hom}}(\nu) \geq \varphi(\nu)$. 

Now we deal with the inequality $f_{hom} \leq \varphi$. We have for every $j =1,\cdots,N$
\begin{align*}
\int_{\Pi_{\nu_j} \cap TQ_{\nu^j}} f(x,\nu_u(x))\mathrm{d}\mathcal{H}^{d-1}(x) = T^{d-1} \varphi(\nu_j),
\end{align*}
Now since $\varphi$ is the greatest convex,even positively $1$-homogeneous function $g$ such that $g(\nu_j)\leq \varphi(\nu_j)$ for all $j =1,\cdots,N$ we have that $f_{\mathrm{hom}}(\nu) \leq \varphi(\nu)$ for all $\nu \in S^{d-1}$.

\medskip

\underline{Step 3:} Note that for every $k \in \mathbb{N}$ we can write
\begin{align}
\varphi(\nu_k) = \sum_{\xi \in V} (t_\xi\beta_\xi +(1-t_\xi)\alpha_\xi)|\langle \nu_k,\xi\rangle|= \sum_{\xi \in V} c_\xi^{k}|\langle \nu_k, \xi\rangle|
\end{align}
with $\alpha_\xi<c_\xi^{k}<c_\xi$. We can therefore consider equivalently
\begin{align*}
f(y,\nu)=\begin{cases}\sum_{\xi \in V}c^{k}_\xi|\langle \nu,\xi\rangle| &\text{if } y \in \Pi_{\nu_k} +\mathbb{Z}^d, k=1,\cdots,N\\
\sum_{\xi \in V} c_\xi |\langle \nu,\xi\rangle| &\text{otherwise}.
\end{cases}
\end{align*}
Every functional of the form (\ref{EepsN}) can be approximated by a functional $E_{\delta,\varepsilon} : BV(\Omega,\{\pm 1\}) \to [0,+\infty)$ of the form
\begin{align}\label{EepsdeltaN}
E_{\delta,\varepsilon}(u) = \int_{\partial^*\{u=1\}}f_{\delta}(\frac{x}{\varepsilon},\nu_u(x))\mathrm{d}\mathcal{H}^{d-1}(x)
\end{align}
where for $\delta >0$
$f_{\delta} : \mathbb{R}^d \times \mathbb{R}^d \to [0,+\infty) $ is defined by
\begin{align*}
f_{\delta}(y,\nu)=\begin{cases}\sum_{\xi \in V} c_\xi^{k}|\langle\nu,\xi\rangle| &\text{if } y \in A_{k,\delta}, y \notin A_{j,\delta} \text{ for all } j \neq k , k=1,\cdots,N\\
\sum_{\xi \in V} \alpha_\xi |\langle \nu,\xi \rangle| &\text{if } y \in A_{k,\delta} \cap A_{j,\delta} \text{ for  some } j,k \in \{1,\cdots,N\}, j\neq k\\
\sum_{\xi \in V}c_\xi|\langle \nu,\xi\rangle| &\text{otherwise,}
\end{cases}
\end{align*} 
where $A_{j,\delta} = \{ y \in \mathbb{R}^d : \mathrm{dist}_{\infty}(y,\Pi_{\nu_j}+\mathbb{Z}^d)\leq \delta\}$. 

In fact $f_{\delta}$ increasingly converges to $f$ as $\delta \to 0$ on $\mathbb{R}^d \setminus \mathcal{N}$, where 
\begin{align*}
\mathcal{N} = \underset{j,k \in \{1,\cdots,N\}}{\bigcup_{j\neq k}}\left((\Pi_{\nu_k} +\mathbb{Z}^d ) \cap (\Pi_{\nu_j} +\mathbb{Z}^d )\right)
\end{align*}
where $\mathcal{H}^{d-1}(\mathcal{N})=0$. Hence again by the monotone convergence theorem and by [\cite{DM},Proposition 5.4] the claim follows. 

\medskip

\underline{Step 4:} Every functional of the form (\ref{EepsdeltaN}) can be approximated by a functional $E_{\eta,\delta,\varepsilon} : BV(\Omega,\{\pm 1\}) \to [0,+\infty)$ of the form 
\begin{align}\label{EepsdeltaNeta}
E_{\eta,\delta,\varepsilon}(u) = \int_{\partial^*\{u=1\}}f_{\eta,\delta}(\frac{x}{\varepsilon},\nu_u(x))\mathrm{d}\mathcal{H}^{d-1}(x)
\end{align}
where for $\eta,\delta >0$
$f_{\eta,\delta} : \mathbb{R}^d \times \mathbb{R}^d \to [0,+\infty) $ is defined by
\begin{align*}
f_{\eta,\delta}(y,\nu)=\begin{cases}\sum_{\xi \in V} c_\xi^{k}|\langle\nu,\xi\rangle| &\text{if } y \in A_{k,\delta}, y \notin A_{j,\delta} \text{ for all } j \neq k , k=1,\cdots,N\\
\sum_{\xi \in V} \alpha_\xi |\langle \nu,\xi \rangle| &\text{if } y \in A_{k,\delta} \cap A_{j,\delta} \text{ for  some } j,k \in \{1,\cdots,N\}, j\neq k\\
\sum_{\xi \in V} \beta_\xi |\langle \nu,\xi\rangle| &\text{if } y \in A_{k,\delta+\eta} \setminus A_{k,\delta}, y \notin A_{j,\delta} \text{ for all } j \neq k,k=1,\cdots,N  \\
\sum_{\xi \in V}c_\xi|\langle \nu,\xi\rangle| &\text{otherwise,}
\end{cases}
\end{align*}
 In fact $f_{\eta,\delta}$ decreasingly converges to $ f_{\delta}$ as $\eta \to 0$. Hence, by the monotone convergence theorem and by [\cite{DM},Proposition 5.4] the claim follows.

\medskip 
 
\underline{Step 5:} Every functional of the form (\ref{EepsdeltaNeta}) can be approximated by a functional $E_{n,\eta,\delta,\varepsilon,N} : BV(\Omega,\{\pm 1\}) \to [0,+\infty]$ of the form 
\begin{align}
E_{n,\eta,\delta,\varepsilon}(u)=\frac{1}{4}\sum_{\xi \in V} \sum_{i \in \Omega_\frac{1}{n}} \frac{1}{n^{d-1}} c_{i,\xi}^{n,\eta,\delta,\varepsilon}(u_{\frac{i}{n}}-u_{\frac{i+\xi}{n}})^2,
\end{align}
where $c^{n,\eta,\delta,\varepsilon}_{i,\xi}$ is $n$-periodic and 
\begin{align}\label{closeness}
|\theta(\{c^{n,\eta,\delta,\varepsilon}_{i,\xi}\})-\theta_\xi| \leq C(\varepsilon,N)\eta \text{ for all } \xi \in V
\end{align}
and such that $E_{n,\eta,\delta,\varepsilon}$ $\Gamma$-converges to $E_{\eta,\delta,\varepsilon}$ as $n \to \infty$. 

By Proposition \ref{Special} there exist $\{c_{i,\xi}^k\}$ $k=1,\cdots,N$, $c_{i,\xi}^\alpha,c_{i,\xi}^\beta,c_{i,\xi}^0$ $i \in \mathbb{Z}^d,\xi \in V$ $T$-periodic for some $T \in \mathbb{N}$ such that
\begin{align*}
\theta(\{c^{m}_{i,\xi}\})= \theta_\xi \text{ for all } m \in \{\alpha,0,1,\cdots,N\},\quad \theta(\{c_{i,\xi}^\beta\})=1
\end{align*}
and $E_n^m : BV(\Omega,\{\pm1\}) \to [0,+\infty]$, $m \in \{\alpha,\beta,0,1,\cdots,N\}$ defined by
\begin{align*}
E^m_n(u) = \frac{1}{4}\sum_{\xi \in V}\sum_{i \in \Omega_\frac{1}{n}}\frac{1}{n^{d-1}}c^m_{i,\xi} (u_{\frac{i}{n}}-u_{\frac{i+\xi}{n}})^2
\end{align*}
$\Gamma$-converges to $E^m : BV(\Omega,\{\pm 1\}) \to [0,+\infty)$, $m \in \{\alpha,\beta,0,1,\cdots,N\}$ defined by
\begin{align} \label{liminf*}
E^*(u) = \int_{\partial^*\{u=1\}} \sum_{\xi \in V}c^*_\xi|\langle \nu,\xi\rangle|\mathrm{d}\mathcal{H}^{d-1}
\end{align} 
where $c^\alpha_\xi = \alpha_\xi, c^\beta_\xi = \beta_\xi$ and $c^0_\xi = c_\xi$. We define $c^{n,\eta,\delta,\varepsilon}_{i,\xi}$ in $[-\frac{n}{2},\frac{n}{2})^d$ by
\begin{align} \label{cixi}
c^{n,\eta,\delta,\varepsilon}_{i,\xi}=\begin{cases}c_{i,\xi}^k &\text{if } i \in \left(A_{k,\delta}\right)_\frac{1}{n}, i \notin \left(A_{j,\delta}\right)_\frac{1}{n} \text{ for all } j \neq k , k=1,\cdots,N\\
c_{i,\xi}^\alpha&\text{if } i \in \left(A_{k,\delta} \cap A_{j,\delta}\right)_\frac{1}{n} \text{ for  some } j,k \in \{1,\cdots,N\}, j\neq k\\
c_{i,\xi}^\beta &\text{if } i \in \left(A_{k,\delta+\eta} \setminus A_{k,\delta}\right)_\frac{1}{n}, i \notin \left(A_{j,\delta}\right)_\frac{1}{n} \text{ for all } j \neq k,k=1,\cdots,N  \\
c_{i,\xi}^0&\text{otherwise,}
\end{cases}
\end{align}
and extend it $n$-periodically. Now (\ref{closeness}) holds, since 
\begin{align*}
\theta_\xi\leq\theta(\{c^{n,\eta,\delta,\varepsilon}_{i,\xi}\})&\leq \frac{1}{n^2}\Big( \#\left\{ i \in  \mathbb{Z}^d\cap\left( [-\frac{n}{2},\frac{n}{2})^d\setminus\left(A_{k,\delta+\eta} \setminus A_{k,\delta}\right)_\frac{1}{n} \right):c^{n,\eta,\delta,\varepsilon}_{i,\xi}=\beta_\xi\right\} \\& \hspace{7mm}+\# \left\{ i \in  \mathbb{Z}^d \cap \left( A_{k,\delta+\eta} \setminus A_{k,\delta}\right)_\frac{1}{n} :c^{n,\eta,\delta,\varepsilon}_{i,\xi}=\beta_\xi \right\}\Big)\\& \leq \theta_\xi +  C(d) \frac{N}{\varepsilon^d} \eta.
\end{align*}
It remains to show that $E_{n,\eta,\delta,\varepsilon} \, \Gamma$-converges to $E_{\eta,\delta,\varepsilon}$ as $n \to \infty$.

Set
\begin{align*}
E(u) =\Gamma\text{-}\lim_{n \to \infty}E_{n,\eta,\delta,\varepsilon}(u)
\end{align*}
which exists up to subsequences. By [\cite{AS},Theorem 4.2] we know that for all $(u,A) \in BV(\Omega,\{\pm 1\})\times \mathcal{A}(\Omega)$ we have that
\begin{align*}
E(u,A) = \int_{\partial^*\{u=1\} \cap A} \varphi'(x,\nu_u(x))\mathrm{d}\mathcal{H}^{d-1}(x)
\end{align*}
for some $\varphi' : \Omega \times \mathbb{R}^d \to [0,+\infty)$ where $E(\cdot,\cdot)$ is a localized version of $E(\cdot)$. Fix such $u \in BV(\Omega,\{\pm 1\})$. We are done if we prove $\varphi'(x,\nu) = f_{\eta,\delta}(\frac{x}{\varepsilon},\nu)$ for $\mathcal{H}^{d-1}$-a.e. $x \in \partial^*\{u=1\}$. We know that for $\mathcal{H}^{d-1}$-a.e. $x \in \partial^*\{u=1\}$ by the Radon-Nikodym Theorem there holds
\begin{align*}
\varphi'(x,\nu) = \lim_{\rho \to 0} \frac{1}{\rho^{d-1}}E(u, Q^\nu_\rho(x)).
\end{align*}
Now for $\rho >0$ fixed let $u_{n,\rho} \to u $ in $L^1( Q^\nu_\rho(x))$ as $n \to \infty$ be such that
\begin{align*}
\lim_{n \to \infty} E_{n,\eta,\delta,\varepsilon}(u_{n,\rho}, Q^\nu_\rho(x)) = \Gamma\text{-}\lim_{n \to \infty}E_{n,\eta,\delta,\varepsilon}(u, Q^\nu_\rho(x)).
\end{align*}
We obtain that
\begin{equation}\label{compphi}
\begin{split}
\varphi'(x,\nu) &= \lim_{\rho \to 0} \frac{1}{\rho^{d-1}}E(u, Q^\nu_\rho(x)) = \lim_{\rho \to 0} \frac{1}{\rho^{d-1}}\lim_{n \to \infty} E_{n,\eta,\delta,\varepsilon}(u_{n,\rho},Q^\nu_\rho(x))\\&=\lim_{\rho \to 0}\frac{1}{\rho^{d-1}}\lim_{n \to \infty} \frac{1}{4}\sum_{\xi \in V} \sum_{i \in \left( Q^\nu_\rho(x)\right)_\frac{1}{n}} \frac{1}{n^{d-1}} c_{i,\xi}^{n,\eta,\delta,\varepsilon}((u_{n,\rho})_{\frac{i}{n}}-(u_{n,\rho})_{\frac{i+\xi}{n}})^2.
\end{split}
\end{equation}
There are five cases to investigate
\begin{itemize}
\item[i)] $x \in \left(A_{k,\delta}\right)^\circ$, $x \notin A_{j,\delta}$ for all $ j\neq k $,for some $k=1,\cdots,N$
\item[ii)]$x \in A_{k,\delta} \cap A_{j,\delta}$ for some $j,k \in \{1,\cdots,N\}$ such that $j \neq k$
\item[iii)] $x \in \left(A_{k,\delta+\eta} \setminus A_{k,\delta}\right)^\circ$, $x \notin A_{j,\delta}$ for all $ j\neq k $,for some $k=1,\cdots,N$
\item[iv)] $x \in \partial A_{k,\delta}$ or  $x \in \partial A_{k,\delta+\eta}$ for some $k=1,\cdots,N$
\item[v)] otherwise
\end{itemize}
We only show (i) and (iv). The cases (i)-(iii),(v) are treated analogously.  

We first show (i), i.e.~let  $x\in \left(A_{k,\delta}\right)^\circ$ for some $k=1,\cdots,N$ and $x \notin A_{j,\delta}$ for all $j \neq k, j=1,\cdots,N$. By the definition (\ref{cixi}) we have that for $\rho $ small enough
\begin{align*}
E_{n,\eta,\delta,\varepsilon}(u_{n,\rho}, Q^\nu_\rho(x))=E^k_n(u_{n,\rho}, Q^\nu_\rho(x))
\end{align*}
and hence by (\ref{compphi}) and (\ref{liminf*}) we have that
\begin{align*}
\varphi'(x,\nu) &= \lim_{\rho \to 0} \frac{1}{\rho^{d-1}}\lim_{n \to \infty} E_{n,\eta,\delta,\varepsilon}(u_{n,\rho},Q^\nu_\rho(x))=\lim_{\rho \to 0} \frac{1}{\rho^{d-1}}\lim_{n \to \infty}E^k_n(u_{n,\rho},Q^\nu_\rho(x))\\& =\lim_{\rho \to 0} \frac{1}{\rho^{d-1}} E(u,  Q^\nu_\rho(x)) = f_{\eta,\delta}(\frac{x}{\varepsilon},\nu).
\end{align*}

Now we treat the case (iv), i.e.~either $x \in \partial A_{k,\delta}$ for some $k =1 ,\cdots,N$ or $x \in \partial A_{k,\delta+\eta}$ for some $k =1 ,\cdots,N$.   In the first  case we have that for $\rho$ small enough $ Q^\nu_\rho(x) \subset \left(A_{k,\delta +\eta}\right)^\circ \setminus \bigcup_{j \neq k}A_{k,\delta +\eta}$ and hence
\begin{align*}
\varphi'(x,\nu)&=\lim_{\rho \to 0} \frac{1}{\rho^{d-1}}\liminf_{n \to \infty} E_{n,\eta,\delta,\varepsilon}(u_{n,\rho},Q^\nu_\rho(x)) \geq \lim_{\rho \to 0} \frac{1}{\rho^{d-1}}\liminf_{n \to \infty}E^k_n(u_{n,\rho}, Q^\nu_\rho(x))\\&  = \sum_{\xi \in V} c^{k}_\xi |\langle\nu,\xi\rangle| = f_{\eta,\delta}(\frac{x}{\varepsilon},\nu).
\end{align*}
We now show that 
\begin{align}\label{boundaryineq}
\varphi'(x,\nu^k) \leq f_{\eta,\delta}\Big(\frac{x}{\varepsilon},\nu^k\Big) \text{ for } \mathcal{H}^{d-1} \text{ a.e. } x \in  \partial A_{k,\delta},
\end{align}
which is sufficient to prove that the $\Gamma$-limit agrees with $E_{\eta,\delta,\varepsilon}$, since $\nu_u = \nu^k$  $\mathcal{H}^{d-1}$-a.e. on $\partial A_{k,\delta}$.  Let 
\begin{align*}
\tilde{ d}:=\min_{\underset{j \neq k}{{1 \leq j \leq N}}}\mathrm{dist}_\infty(x, A_{j,\delta}).
\end{align*}
Fix $r<\frac{1}{2}\min\{\tilde{d},\delta\}$ small enough, assume that $\nu^k$ is the unit outer normal of the set $A_{k,\delta}$ and let $u \in BV_{loc}(\mathbb{R}^d;\{\pm 1\})$ be defined by
\begin{align}
u(z) = \chi_{Q^{\nu^k}_{\frac{r}{2}}(x)}\Big(z+ \nu^k\frac{r}{2}\Big)-2
\end{align}
Let $x_n \to x$ be such that $\{x_n\}_n \subset \left(A_{k,\delta}\right)^\circ$ and
\begin{align*}
\min_{\underset{j \neq k}{{1 \leq j \leq N}}}\mathrm{dist}_\infty(x_n,\partial A_{j,\delta}) \geq \frac{3}{4}\tilde{d}.
\end{align*}
Define $u_n \in BV_{loc}(\mathbb{R}^d;\{\pm 1\})$ by 
\begin{align}
u_n(x) = \chi_{Q^{\nu^k}_{\frac{r}{2}}(x_n)}\Big(z+ \nu^k\frac{r}{2}\Big)-2.
\end{align}

\begin{align}\label{valueu}
E(u) = \int_{\partial^*\{u=1\} \cap \partial A_{k,\delta}}\varphi'(y,\nu_u)\mathrm{d}\mathcal{H}^{d-1}(y)+ \int_{\partial^*\{u=1\}\setminus \partial A_{k,\delta}}\varphi'(y,\nu_u)\mathrm{d}\mathcal{H}^{d-1}(y)
\end{align}
and for all $n \in \mathbb{N}$ we have
\begin{align} \label{valueun}
E(u_n)&= \int_{\partial^*\{u=1\} \cap \{\nu_{u_n}=\nu_k\}}\varphi'(y,\nu_u)\mathrm{d}\mathcal{H}^{d-1}(y)+ \int_{\partial^*\{u=1\} \setminus \{\nu_{u_n}=\nu_k\}}\varphi'(y,\nu_u)\mathrm{d}\mathcal{H}^{d-1}(y)
\end{align}
Note that in (\ref{valueu}) and (\ref{valueun}) $\nu_u = \nu^k$  in both the first terms and that the second terms of (\ref{valueu}) and (\ref{valueun}) agree, because of (i). We  have $u_n \to u $ in $L^1_{loc}(\mathbb{R}^d;\{\pm 1\})$ and therefore by the lower semicontinuity of $E$ we have
\begin{align*} 
\liminf_{n \to \infty} E(u_n) \geq E(u)
\end{align*}
and therefore
\begin{align*}
r^{d-1}f_{\eta,\delta}\Big(\frac{x}{\varepsilon},\nu^k\Big)=r^{d-1}\liminf_{n \to \infty} f_{\eta,\delta}\Big(\frac{x_n}{\varepsilon},\nu^k\Big) \geq \int_{Q_{\frac{r}{2}}^{\nu^k}(x) \cap \partial A_{k,\delta}}\varphi'(y,\nu^k)\mathrm{d}\mathcal{H}^{d-1}(y)
\end{align*}

Dividing by $r^{d-1}$ and letting $r \to 0$, using the fact that $x$ is a Lebesgue point with respect to $\mathcal{H}^{d-1}\lfloor_{\partial A_{k,\delta} }$, the claim follows. The other case of (iv) can be done analogously and this therefore yields Step 5. 

\medskip

\underline{Step 6:} By the metrizability properties of $\Gamma$-convergence (see [\cite{DM},Theorem 10.22]),  Steps 1-5 together with a diagonal argument, noting that $\eta_{k},N,\varepsilon_k $ can be chosen such that for all $k \in \mathbb{N}$ we have that $\eta_{k}\frac{N}{\varepsilon_k^d}\leq \sqrt{\eta_{k}}$, yields that there exists a sequence of coefficients $c^{n_k}_{i,\xi}=c_{i,\xi}^{n_k,\eta_k,\delta_k,\varepsilon_k}$ $n_k$-periodic such that
\begin{align*}
\theta_\xi(\{c^{n_k}_{i,\xi}\}) \to \theta_\xi \text{ as } k \to \infty \text{ for all } \xi \in V
\end{align*} 
and $E_k : BV(\Omega,\{\pm 1\}) \to [0,+\infty]$ defined by 
\begin{align*}
E_{k}(u) = \frac{1}{4}\sum_{\xi \in V}\sum_{i \in \Omega_\frac{1}{n}} \frac{1}{n_k^{d-1}} c^{n_k}_{i,\xi}(u_{\frac{i}{n}} - u_{\frac{i+\xi}{n}})^2, \text{ for all } u \in PC_\frac{1}{n_k}(\Omega,\{\pm 1\})
\end{align*}
$\Gamma$-converges as $k \to \infty  $ to the functional $E_\varphi : BV(\Omega,\{\pm 1\}) \to [0,\infty)$ defined by
\begin{align*}
E_\varphi(u) = \int_{\partial^* \{u=1\}\cap \Omega} \varphi(\nu_u(x))\mathrm{d}\mathcal{H}^{d-1},
\end{align*}
which yields the claim.
\end{proof}
Now that we proved Theorem (\ref{Gammaclosure}) our goal is to show that this implies that the homogenized energy densities $\varphi^\varepsilon$ of the $c_{i,\xi}^\varepsilon$ converge to $\varphi$. This yields the next theorem and therefore implies that the bounds are optimal.
\begin{theorem}\label{General}
Let $\varphi : \mathbb{R}^d \to [0,+\infty)$ be convex, even, positively 1-homogeneous and such that
\begin{align}
\sum_{\xi\in V}\alpha_\xi|\langle \nu,\xi\rangle| \leq \varphi(\nu) \leq \sum_{\xi\in V}(\theta_\xi\beta_\xi+(1-\theta_\xi)\alpha_\xi)|\langle \nu,\xi\rangle|
\end{align}
with $\theta_\xi \in [0,1], \xi \in V$. If $\theta \in [0,1]$ satisfies (\ref{Averagefraction}) then $\varphi \in \textbf{H}_{\alpha,\beta,V}(\theta)$.
\end{theorem}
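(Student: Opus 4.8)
\textbf{Proof proposal for Theorem \ref{General}.}

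The plan is to use Proposition \ref{Gammaclosure} to produce, for the given $\varphi$, a sequence of periodic coefficients $\{c^\varepsilon_{i,\xi}\}$ with $\theta_\xi(\{c^\varepsilon_{i,\xi}\})\to\theta_\xi$ whose associated energies $E_\varepsilon$ (rescaled to period $1/\varepsilon$, i.e.\ with lattice spacing $\varepsilon$) $\Gamma$-converge to $E_\varphi$; then to extract from this the convergence of the \emph{homogenized energy densities} $\varphi^\varepsilon$ of $\{c^\varepsilon_{i,\xi}\}$ to $\varphi$, which is exactly what is needed to conclude $\varphi\in\textbf{H}_{\alpha,\beta,V}(\theta)$ by the very definition of that set (with $\theta^k\to\theta$ allowed). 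The point is that for each fixed $\varepsilon$ the coefficients $\{c^\varepsilon_{i,\xi}\}$ are periodic, so by the compactness/representation results of \cite{AS} the energy $\frac14\sum_{\xi}\sum_i c^\varepsilon_{i,\xi}(u_i-u_{i+\xi})^2$, regarded on scale $\delta\mathbb{Z}^d$ and sent $\delta\to0$, $\Gamma$-converges to $E_{\varphi^\varepsilon}$ for a well-defined convex positively $1$-homogeneous $\varphi^\varepsilon$ given by the cell formula (\ref{phi}). So the family $\{\varphi^\varepsilon\}$ is well-defined, and what remains is to show $\varphi^\varepsilon\to\varphi$ pointwise (equivalently, locally uniformly, since these are $1$-homogeneous convex functions bounded above and below by fixed crystalline norms).

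The key steps, in order: (1) Invoke Proposition \ref{Gammaclosure} to obtain $\{c^\varepsilon_{i,\xi}\}$, $\tfrac1\varepsilon$-periodic, with $\theta_\xi(\{c^\varepsilon_{i,\xi}\})\to\theta_\xi$, so that $E_\varepsilon\stackrel{\Gamma}{\to}E_\varphi$ in $L^1(\Omega)$. (2) Observe that the homogenized densities $\varphi^\varepsilon$ all satisfy the uniform two-sided bound $\sum_\xi\alpha_\xi|\langle\nu,\xi\rangle|\le\varphi^\varepsilon(\nu)\le\sum_\xi\beta_\xi|\langle\nu,\xi\rangle|$ (the lower bound from the trivial bound, the upper from Proposition \ref{BbA} or directly from $c^\varepsilon_{i,\xi}\le\beta_\xi$); hence by a compactness argument for convex $1$-homogeneous functions every subsequence of $\{\varphi^\varepsilon\}$ has a further subsequence converging locally uniformly to some limit $\varphi^*$ in the same class. (3) Identify $\varphi^*=\varphi$: along such a subsequence, the energies $E_{\varphi^\varepsilon}$ $\Gamma$-converge to $E_{\varphi^*}$ (stability of $\Gamma$-limits of surface integrals under local uniform convergence of continuous $1$-homogeneous integrands — cf.\ \cite{DM}), while on the other hand a diagonal argument combining the two limits $\delta\to0$ (discrete-to-continuum, at fixed $\varepsilon$) and $\varepsilon\to0$ shows the double limit equals $E_\varphi$; since $\Gamma$-limits are unique we get $E_{\varphi^*}=E_\varphi$, hence $\varphi^*=\varphi$ (two convex $1$-homogeneous surface densities inducing the same $BV$-functional on $\{\pm1\}$-valued functions coincide, by testing on half-space configurations $u_{x,\nu}$). (4) Since every subsequence has a further subsequence converging to $\varphi$, the whole family converges: $\varphi^\varepsilon\to\varphi$. (5) Conclude: we have exhibited $\theta^\varepsilon:=\theta(\{c^\varepsilon_{i,\xi}\})\to\theta$ (by (\ref{Averagefraction}) applied to the limiting $\theta_\xi$ and the convergence $\theta_\xi(\{c^\varepsilon_{i,\xi}\})\to\theta_\xi$), coefficients with these volume fractions, and $\varphi^\varepsilon\to\varphi$; this is precisely the condition defining membership $\varphi\in\textbf{H}_{\alpha,\beta,V}(\theta)$.

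The main obstacle is step (3), the identification of the double limit. One must be careful that the $\Gamma$-convergence $E_\varepsilon\to E_\varphi$ provided by Proposition \ref{Gammaclosure} is a convergence of the \emph{discrete} functionals on lattice spacing $\varepsilon$ (where $\varepsilon$ simultaneously plays the role of lattice spacing and inverse period), whereas $\varphi^\varepsilon$ is the homogenized density obtained by sending the lattice spacing to $0$ with the period $\tfrac1\varepsilon$ \emph{frozen}; reconciling these two requires either a careful diagonalization (choosing the lattice spacing $\delta=\delta(\varepsilon)$ fine enough that the discrete energy on spacing $\delta$ with $\tfrac1\varepsilon$-periodic coefficients is within $\varepsilon$, in the appropriate metric for $\Gamma$-convergence, of $E_{\varphi^\varepsilon}$) or, more cleanly, directly comparing the cell-problem value (\ref{phi}) for $\varphi^\varepsilon(\nu)$ with the minimal value of $E_\varepsilon$ over half-space data on a large ball, which by the $\Gamma$-convergence and equicoercivity of Proposition \ref{Gammaclosure} tends to $\varphi(\nu)$ times the area factor. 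I expect the second route to be shorter: fix $\nu$, test both the definition of $\varphi^\varepsilon$ and the $\Gamma$-limit $E_\varphi$ with the same half-space recovery sequences on $\varepsilon Q^\nu_R$, and play off the two inequalities $\limsup$ and $\liminf$ as $R\to\infty$ and $\varepsilon\to0$ after a suitable diagonal extraction, using Proposition \ref{BbA} to control error terms near the boundary of the test cube uniformly in $\varepsilon$.
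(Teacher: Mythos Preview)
Your overall architecture is right and matches the paper: invoke Proposition \ref{Gammaclosure} to get $\tfrac1\varepsilon$-periodic coefficients $\{c^\varepsilon_{i,\xi}\}$ with the correct limiting volume fractions and $E_{1,\varepsilon}\stackrel{\Gamma}{\to}E_\varphi$, then show that the homogenized densities $\varphi^\varepsilon$ (obtained by freezing $\varepsilon$ and sending the lattice spacing $\eta\varepsilon\to0$) converge to $\varphi$. The problem is that your step (3) does not close.

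The diagonal argument you sketch is circular. You know $\Gamma\text{-}\lim_{\eta\to 0}E_{\eta,\varepsilon}=E_{\varphi^\varepsilon}$ for each fixed $\varepsilon$, and $\Gamma\text{-}\lim_{\varepsilon\to 0}E_{1,\varepsilon}=E_\varphi$. A diagonal would give you some sequence $E_{\eta_k,\varepsilon_k}\stackrel{\Gamma}{\to}E_{\varphi^*}$ with $\eta_k\to 0$, but there is no a priori relation between $E_{\eta_k,\varepsilon_k}$ and $E_{1,\varepsilon_k}$: they live on different lattices, and nothing you have stated prevents them from having different $\Gamma$-limits. Likewise, your ``direct'' route runs into the same scale mismatch: after rescaling, the cell formula \eqref{phi} for $\varphi^\varepsilon(\nu)$ is a limit over lattice spacings $\delta\to0$ with $\varepsilon$ \emph{fixed}, whereas Proposition \ref{Gammaclosure} controls only the diagonal $\delta=\varepsilon$. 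Playing off recovery sequences does not help, because a competitor on $\varepsilon\mathbb{Z}^d$ is not a competitor on $\eta\varepsilon\mathbb{Z}^d$, and the half-space $\liminf$ at fixed radius does not propagate to the limit $R\to\infty$ without a uniformity you have not established.

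The paper's resolution is a genuine additional idea that you are missing: pass to the positively $1$-homogeneous extension $F_\varepsilon$ of $E_\varepsilon$ to all of $\mathcal{PC}_\varepsilon(\Omega)$ (Remark \ref{F,E}), whose $\Gamma$-limit on $BV$ admits a \emph{convex} periodic-cell formula (Proposition \ref{CellFormula}). Convexity plus the $\tfrac1\varepsilon$-periodicity of the coefficients then yields the scale-invariance identity \eqref{SingleCell}: the discrete periodic cell problem for $F$ gives the same value at lattice spacing $\eta\varepsilon$ as at spacing $\varepsilon$ (one direction by averaging translates, the other by rescaling a single period). This collapses the two scales and reduces everything to the single quantity $m_\varepsilon(\nu)$ at spacing $\varepsilon$, whose convergence to $m(\nu)=\tfrac12\varphi(\nu)$ follows from the $\Gamma$-convergence of $F_{1,\varepsilon}$ via Propositions \ref{Limsup minima} and \ref{Liminf minima}. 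Without this convex auxiliary functional and the averaging argument behind \eqref{SingleCell}, the link between $\varphi^\varepsilon$ and the diagonal $E_{1,\varepsilon}$ remains open.
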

To prove Theorem \ref{General} we introduce the localization on regular open sets of $E_\varepsilon : BV(\Omega) \times \mathcal{A}^{reg}(\Omega) \to [0,+\infty]$ of the form
\begin{align}\label{Es}
E_\varepsilon(u,A) = \begin{cases}\frac{1}{4}\underset{\xi \in V}{\sum} \underset{i,i+\xi \in  A_\varepsilon}{\sum}\varepsilon^{d-1} c^\varepsilon_{i,\xi}(u_{\varepsilon i}-u_{\varepsilon(i+\xi)})^2 & \text{if } u \in \mathcal{PC}_\varepsilon(\Omega,\{\pm 1\})\\
+\infty &\text{otherwise,}
\end{cases}
\end{align}
and the localization on regular open sets of an auxiliary functional $F_\varepsilon : BV(\Omega) \times \mathcal{A}^{reg}(\Omega) \to [0,+\infty]$ defined by
\begin{align}\label{Fs}
F_\varepsilon(u,A) = \begin{cases}\frac{1}{2}\underset{\xi \in V}{\sum} \underset{i,i+\xi \in  A_\varepsilon}{\sum}\varepsilon^{d-1} c^\varepsilon_{i,\xi}|u_{\varepsilon i}-u_{\varepsilon(i+\xi)}| & \text{if } u \in \mathcal{PC}_\varepsilon(\Omega)\\
+\infty &\text{otherwise.}
\end{cases}
\end{align}
Note that $F_\varepsilon$ is the positively $1$-homogeneous extension of $E_\varepsilon$ to $\mathcal{PC}_\varepsilon(\Omega)$, that is to say that for $u \in \mathcal{PC}_\varepsilon(\Omega,\{\pm 1\})$ we have $\lambda E_\varepsilon(u,A) = \lambda F_\varepsilon(u,A) = F_\varepsilon(\lambda u,A)$.\\
\begin{remark}\label{F,E} \rm By \cite{AS} up to subsequences it holds that
\begin{align*}
\Gamma\text{-}\lim_{\varepsilon \to 0} E_\varepsilon(u,A) = \int_{\partial^*\{u=1\}\cap A} \varphi(x,\nu)\mathrm{d}\mathcal{H}^{d-1} =:E(u,A) 
\end{align*}
for all  $(u,A) \in BV(\Omega,\{\pm 1\}) \times \mathcal{A}(\Omega)$ and for some $\varphi :\Omega \times \mathbb{R}^d \to [0,+\infty)$ one homogeneous. Analogously as in \cite{AS}, using [\cite{BP},Remark 2.2, Lemma 4.2] it can be shown that up to the same subsequence
\begin{align*}
\Gamma\text{-}\lim_{\varepsilon \to 0} F_\varepsilon(u,A) = \frac{1}{2}\int_A \varphi\left(x,Du\right) =: F(u,A)
\end{align*}
for all $(u,A) \in BV(\Omega) \times \mathcal{A}(\Omega)$, where the energy densities of $E$ and $F$ agree. We have used the shorthand of
\begin{align*}
\int_A \varphi(x,Du) = \int_A \varphi\Big(x, \frac{dDu}{d|Du|}\Big)\mathrm{d}|Du|.
\end{align*}
Note that the functionals $F_\varepsilon$ satisfy suitable growth conditions, i.e. 
\begin{align*}
\frac{1}{C}|Du|(A) \leq F_\varepsilon(u,A) \leq C|Du|(A) 
\end{align*}
for all $(u,A) \in PC_{\varepsilon}(A) \times \mathcal{A}^{reg}(\Omega)$.
\end{remark}
The next Proposition establishes a cell formula for the $\Gamma$-limit of the auxiliary functional to recover the energy density, provided it is homogeneous in the spatial variable. Proposition \ref{Limsup minima} and Proposition \ref{Liminf minima} show the convergence of the cell formulas of the approximating (in the sense of $\Gamma$-convergence) energies to the cell formula of the limiting energy. Those three Propositions will then be used in the proof of Theorem \ref{General}.
 \begin{proposition}\label{CellFormula}
 Let $E : BV(\Omega) \times \mathcal{A}(\Omega) \to [0,+\infty] $ be defined by
 \begin{align}
 F(u,A) = \int_A \varphi(Du) 
 \end{align}
 for some $\varphi : \mathbb{R}^d \to [0,\infty)$, convex, positively $1$-homogeneous and such that
 \begin{align*}
\frac{1}{C}|\nu|\leq \varphi(\nu) \leq  C|\nu|
 \end{align*}
 for some $C >1$. Assume that $F(\cdot,A) $ is $L^1(A)$ lower semicontinuous for all $A \in \mathcal{A}(\Omega)$, then
 \begin{align}
 \varphi(\nu) = \inf \Big\{ \int_{[0,1)^d} \varphi(Du) : u \in BV_{loc}(\mathbb{R}^d) : u-\nu x \, 1\text{-periodic}\Big\}.
 \end{align}
 \end{proposition}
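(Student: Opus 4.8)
The plan is to prove the cell formula by showing the two inequalities separately, using that $F(\cdot, A)$ is a lower semicontinuous functional with linear growth that is invariant under translations (since $\varphi$ has no explicit $x$-dependence). Denote by $m(\nu)$ the infimum on the right-hand side. The easy inequality is $m(\nu) \le \varphi(\nu)$: the affine function $u(x) = \langle \nu, x\rangle$ is admissible (it is $1$-periodic after subtracting $\nu x$, trivially, since $u - \nu x \equiv 0$), and $\int_{[0,1)^d} \varphi(Du) = \varphi(\nu) |[0,1)^d| = \varphi(\nu)$. This gives one direction with essentially no work.

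For the reverse inequality $\varphi(\nu) \le m(\nu)$, first I would take an admissible competitor $u \in BV_{\mathrm{loc}}(\mathbb{R}^d)$ with $u - \nu x$ being $1$-periodic, and use it to build a recovery-type sequence. The standard trick is to set $u_k(x) = \frac{1}{k} u(kx)$ on the unit cube $Q = [0,1)^d$ (or on $\Omega$). By periodicity of $u - \nu x$, one checks that $u_k \to \ell_\nu$ in $L^1$, where $\ell_\nu(x) = \langle \nu, x\rangle$, and moreover, by the scaling of $\varphi$ (one-homogeneity) together with a change of variables, $\int_Q \varphi(Du_k) = \int_Q \varphi(Du(k\,\cdot)) \,dx = \int_Q \varphi(Du)$ (using periodicity to replace the average over $kQ$ by the average over a single period cell). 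Hence $F(u_k, Q) = \int_Q \varphi(Du) \to \int_Q \varphi(Du)$, a constant sequence. By the $L^1$-lower semicontinuity of $F(\cdot, Q)$ we obtain
\[
\varphi(\nu) = F(\ell_\nu, Q) \le \liminf_{k\to\infty} F(u_k, Q) = \int_{[0,1)^d} \varphi(Du).
\]
Taking the infimum over admissible $u$ yields $\varphi(\nu) \le m(\nu)$, completing the proof.

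The main obstacle is the second inequality, and specifically making rigorous the claim that the periodic rescaling $u_k(x) = \tfrac1k u(kx)$ both converges to the linear function $\ell_\nu$ in $L^1(Q)$ and has energy exactly equal to the energy over one period cell. The convergence requires controlling $\|u_k - \ell_\nu\|_{L^1(Q)}$: writing $v = u - \ell_\nu$ (which is $1$-periodic, hence in $L^\infty_{\mathrm{loc}}$ with bounded oscillation and, being $BV_{\mathrm{loc}}$ and periodic, bounded on each period), we have $u_k(x) - \ell_\nu(x) = \tfrac1k v(kx)$, and $\|\tfrac1k v(k\cdot)\|_{L^1(Q)} = \tfrac1k \|v\|_{L^1(Q)} \to 0$ by periodicity — here one needs $v \in L^1_{\mathrm{loc}}$, which follows since a periodic $BV_{\mathrm{loc}}$ function is $BV$ on the torus hence integrable. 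The energy identity uses the change of variables $y = kx$: $Du_k(x) = Du(kx)$ as measures (the $\tfrac1k$ and the $k$ from the chain rule cancel), and $\int_Q \varphi(Du(k\cdot)) = \tfrac{1}{k^d}\int_{kQ}\varphi(Du) = \int_Q \varphi(Du)$ by the $1$-periodicity of $Du$ (which is $D(u-\ell_\nu)$ plus the constant measure $\nu\,\mathcal{L}^d$, both periodic). One should be slightly careful that $F$ is only assumed lower semicontinuous and with the stated growth bounds, so one cannot a priori identify $F(u_k,Q)$ with $\int_Q\varphi(Du_k)$ unless this identification is part of the hypothesis — but it is, since $F$ is *defined* by $F(u,A) = \int_A \varphi(Du)$, so this is immediate. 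A final minor point: the infimum in the cell formula should be interpreted over competitors with $u - \nu x$ genuinely $1$-periodic (periodic in each coordinate direction with period $1$), and one must verify this class is nonempty and closed under the operations used, which is routine.
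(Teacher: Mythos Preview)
Your argument is correct and more direct than the paper's. The paper proceeds through two intermediate reductions before rescaling: it first translates the competitor so that $|Du|(\partial Q)=0$ for the open cube $Q=(-\tfrac12,\tfrac12)^d$, then mollifies (invoking the Reshetnyak continuity theorem) to pass to $W^{1,1}_{\mathrm{loc}}$ competitors, and only then performs the rescaling $u_n(x)=\tfrac1n u(nx)$ in the Sobolev setting, where boundary concentration is a non-issue. You skip both steps and rescale directly in $BV$; your energy identity $\int_{[0,1)^d}\varphi(Du_k)=\int_{[0,1)^d}\varphi(Du)$ is valid because the half-open cubes $j+[0,1)^d$ tile $[0,k)^d$ disjointly and the pushforward of $Du$ under $x\mapsto x/k$ behaves correctly for a positively $1$-homogeneous integrand. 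This is a genuine simplification over the paper's route.

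One point to tighten: lower semicontinuity is only assumed for open sets $A\in\mathcal A(\Omega)$, and $[0,1)^d$ is not open. The fix is immediate and costs nothing: apply l.s.c.\ on the open cube $(0,1)^d$ instead, noting that $F(u_k,(0,1)^d)\le F(u_k,[0,1)^d)=\int_{[0,1)^d}\varphi(Du)$ since $(0,1)^d\subset[0,1)^d$, while $F(\ell_\nu,(0,1)^d)=\varphi(\nu)$ because $D\ell_\nu=\nu\,\mathcal L^d$ does not charge the boundary. This still yields $\varphi(\nu)\le\int_{[0,1)^d}\varphi(Du)$ for every admissible $u$. The paper's translation-and-mollification detour is precisely designed to neutralise this boundary issue, but since you only need the inequality in one direction, your shortcut (with this small adjustment) works and is shorter.
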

 \begin{proof}
 We prove 
 \begin{align}
 &\inf \Big\{ \int_{[0,1)^d} \varphi(Du) : u \in BV_{loc}(\mathbb{R}^d) : u-\nu x \, 1\text{-periodic}\Big\} \\ = &\inf \Big\{ \int_{Q} \varphi(Du) : u \in BV_{loc}(\mathbb{R}^d) : u-\nu x \, 1\text{-periodic}, |Du|(\partial Q)=0\Big\} \\= &\inf \Big\{ \int_{Q} \varphi(Du) : u \in W^{1,1}_{loc}(\mathbb{R}^d) : u-\nu x \, 1\text{-periodic}\Big\} =\varphi(\nu).
 \end{align}
 which yields the claim. Note that 
\begin{align*}
 &\inf \Big\{ \int_{[0,1)^d} \varphi(Du) : u \in BV_{loc}(\mathbb{R}^d) : u-\nu x \, 1\text{-periodic}\Big\} \\ \leq &\inf \Big\{ \int_{Q} \varphi(Du) : u \in BV_{loc}(\mathbb{R}^d) : u-\nu x \, 1\text{-periodic}, |Du|(\partial Q)=0\Big\} \\ \leq &\inf \Big\{ \int_{Q} \varphi(Du) : u \in W^{1,1}_{loc}(\mathbb{R}^d) : u-\nu x \, 1\text{-periodic}\Big\} \leq \varphi(\nu),
 \end{align*}  
 since we only decrease the set of admissible test functions in the minimum problems and in the last infimum $u(x) =\nu x$ is admissible. We prove
 \begin{equation}\label{Du=0}
 \begin{split}
  &\inf \Big\{ \int_{[0,1)^d} \varphi(Du) : u \in BV_{loc}(\mathbb{R}^d) : u-\nu x \, 1\text{-periodic}\Big\} \\ \geq &\inf \Big\{ \int_{Q} \varphi(Du) : u \in BV_{loc}(\mathbb{R}^d) : u-\nu x \, 1\text{-periodic}, |Du|(\partial Q)=0\Big\}.
  \end{split}
 \end{equation}
 To this end let $u \in BV_{loc}(\mathbb{R}^d)$ be such that $u-\nu x$ is $1\text{-periodic}$. Let $\tau \in \mathbb{R}^d$ be such that $u_\tau \in BV_{loc}(\mathbb{R}^d)$ defined by 
 \begin{align*}
 u_\tau(x) = u(x+\tau)
 \end{align*}
satisfies $|Du_\tau|(\partial Q) = 0$ and 
 \begin{align*}
 \int_{Q} \varphi(Du_\tau)= \int_{[0,1)^d} \varphi(Du_\tau) =\int_{[0,1)^d} \varphi(Du).
 \end{align*}
 Then this yields (\ref{Du=0}). Next we prove that
 \begin{equation}\label{W11}
 \begin{split}
  &\inf \Big\{ \int_{Q} \varphi(Du) : u \in BV_{loc}(\mathbb{R}^d) : u-\nu x \, 1\text{-periodic}, |Du|(\partial Q)=0\Big\} \\\geq &\inf \Big\{ \int_{Q} \varphi(Du) : u \in W^{1,1}_{loc}(\mathbb{R}^d) : u-\nu x \, 1\text{-periodic}\Big\}.
  \end{split}
 \end{equation}
 Let $ u \in BV_{loc}(\mathbb{R}^d)$ be such that $u-\nu x$ is $1\text{-periodic}, |Du|(\partial Q)=0$. Set $u_\varepsilon = u * \rho_\varepsilon$, where $\{\rho_\varepsilon\}_\varepsilon$ is a family of positive symmetric mollifiers. We have that $u_\varepsilon \to u$ in $L^1_{loc}(\mathbb{R}^d)$ and $|Du_\varepsilon|(Q) \to |Du|(Q)$ as $\varepsilon \to 0$. By the Reshetnyak Continuity Theorem [\cite{AFP},Theorem 2.39] we have that
 \begin{align*}
 \lim_{\varepsilon \to 0} \int_{Q}\varphi(Du_\varepsilon) = \int_{Q}\varphi(Du)
 \end{align*}
 and (\ref{W11}) follows. Now we prove 
 \begin{equation}
 \inf \Big\{ \int_{Q} \varphi(Du) : u \in W^{1,1}_{loc}(\mathbb{R}^d) : u-\nu x \, 1\text{-periodic}\Big\} \geq\varphi(\nu).
 \end{equation}
 Let $u \in W^{1,1}_{loc}(\mathbb{R}^d) $ be such that $u-\nu x$ is $1\text{-periodic}$. Let for $n \in \mathbb{N}$ $u_n \in W^{1,1}(Q)$ be defined by
\begin{align*}
u_n(x) = \frac{1}{n}u(nx).
\end{align*}
We then have that $u_n \to \nu x$ in $L^1(Q)$ and 
\begin{align} \label{equality}
\int_{Q}\varphi(Du_n)\mathrm{d}x = \int_{Q} \varphi(Du)\mathrm{d}x.
\end{align}
By the lower semicontinuity of $E(\cdot, Q)$ we obtain
\begin{equation}
\varphi(\nu) = F(\nu x ,Q) \leq \liminf_{n \to \infty} F(u_n, Q) = \int_{Q} \varphi(Du)\mathrm{d}x
\end{equation}
and the claim follows.
 \end{proof}
Set
\begin{align*}
 &m_\varepsilon(\nu) := \inf \Big\{ \frac{1}{2}\sum_{\xi \in V} \sum_{i \in \left([-\frac{1}{2},\frac{1}{2})^d\right)_\varepsilon}\varepsilon^{d-1} c_{i,\xi}^\varepsilon |u_{\varepsilon i} - u_{\varepsilon(i+\xi)}| : u \in \mathcal{PC}_{\varepsilon}(\mathbb{R}^d), u -\nu x \, 1 \text{-periodic}\Big\} \\
&m(\nu) := \inf \Big\{ F(u,[0,1)^d) : u \in BV_{loc}(\mathbb{R}^d) \, ,u -\nu x \, 1 \text{-periodic}\Big\}.
\end{align*}
 \begin{proposition} \label{Limsup minima} Let $F_\varepsilon : BV(\Omega) \times \mathcal{A}(\Omega) \to [0,\infty]$ be defined as in (\ref{Fs}) and let $F_\varepsilon$ $\Gamma$-converge with respect to the strong $L^1(\Omega)$ topology to the functional $F : BV(\Omega) \times \mathcal{A}(\Omega) \to [0,\infty]$ given by Remark \ref{F,E} and of the form as in Proposition \ref{CellFormula}. It then holds
\begin{equation}
\limsup_{\varepsilon \to 0} m_\varepsilon(\nu) \leq m(\nu).
\end{equation}
\end{proposition}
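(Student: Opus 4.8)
The plan is to construct, for each small $\varepsilon$ with $1/\varepsilon\in\mathbb N$, an admissible competitor for $m_\varepsilon(\nu)$ whose energy tends to $m(\nu)$. First I would note that, since $F(u,A)=\int_A\varphi(Du)$, the definition of $m(\nu)$ and Proposition \ref{CellFormula} give
\begin{equation*}
m(\nu)=\inf\Big\{\int_{[0,1)^d}\varphi(Du): u\in BV_{loc}(\mathbb R^d),\ u-\nu x\ 1\text{-periodic}\Big\}=\varphi(\nu),
\end{equation*}
the infimum being attained at the affine function $x\mapsto\langle\nu,x\rangle$; hence it suffices to prove $\limsup_{\varepsilon\to0}m_\varepsilon(\nu)\le\varphi(\nu)$. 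The two building blocks will be: (i) a recovery sequence $w_\varepsilon\to\langle\nu,\cdot\rangle$ in $L^1(Q)$ for the $\Gamma$-convergence $F_\varepsilon(\cdot,Q)\to F(\cdot,Q)$ on the unit cube $Q=(-\tfrac12,\tfrac12)^d$ (so that $\limsup_\varepsilon F_\varepsilon(w_\varepsilon,Q)\le F(\langle\nu,\cdot\rangle,Q)=\varphi(\nu)$), and (ii) the piecewise-constant interpolant $\ell_\nu^\varepsilon\in\mathcal{PC}_\varepsilon(\mathbb R^d)$ of the affine function, $\ell_\nu^\varepsilon(\varepsilon i)=\langle\nu,\varepsilon i\rangle$, which satisfies $\ell_\nu^\varepsilon-\langle\nu,\cdot\rangle$ $1$-periodic, $\ell_\nu^\varepsilon\to\langle\nu,\cdot\rangle$ uniformly, and, by the one-line computation $|\ell_\nu^\varepsilon(\varepsilon(i+\xi))-\ell_\nu^\varepsilon(\varepsilon i)|=\varepsilon|\langle\nu,\xi\rangle|$ together with $c^\varepsilon_{i,\xi}\le C$, the bound $F_\varepsilon(\ell_\nu^\varepsilon,A)\le C|A|+o(1)$ for every open $A$.

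Next I would glue $w_\varepsilon$ and $\ell_\nu^\varepsilon$ across a thin shell near $\partial Q$. Writing $Q_s=(-\tfrac12+s,\tfrac12-s)^d$, fix $0<t<\tfrac14$ and $M\in\mathbb N$ and subdivide the shell $Q_t\setminus\overline{Q_{2t}}$ into $M$ concentric sub-shells of width comparable to $t/M$. By a standard De Giorgi slicing argument — the one underlying the fundamental estimate for functionals of the form (\ref{Fs}), cf.\ \cite{AS}, \cite{BP} — one of the sub-shells, say $S_\ell$, can be used to switch from $w_\varepsilon$ (retained on $Q_{2t}$) to $\ell_\nu^\varepsilon$ (retained on $Q\setminus\overline{Q_{2t}}$, in particular on a full $\varepsilon$-collar of $\partial Q$), producing $u_\varepsilon\in\mathcal{PC}_\varepsilon(Q)$ with
\begin{equation*}
F_\varepsilon(u_\varepsilon,Q)\le F_\varepsilon(w_\varepsilon,Q)+F_\varepsilon(\ell_\nu^\varepsilon,Q\setminus\overline{Q_{2t}})+\frac1M\big(F_\varepsilon(w_\varepsilon,Q)+F_\varepsilon(\ell_\nu^\varepsilon,Q)\big)+\frac{C}{t}\,\|w_\varepsilon-\ell_\nu^\varepsilon\|_{L^1(Q)}.
\end{equation*}
Since $u_\varepsilon$ coincides with $\ell_\nu^\varepsilon$ near $\partial Q$, its periodic extension by $u_\varepsilon(x+k)=u_\varepsilon(x)+\langle\nu,k\rangle$, $k\in\mathbb Z^d$, is well defined and satisfies $u_\varepsilon-\nu x$ $1$-periodic, so $u_\varepsilon$ is admissible in $m_\varepsilon(\nu)$; moreover the interactions present in $m_\varepsilon(\nu)$ but not in $F_\varepsilon(u_\varepsilon,Q)$ (those with $\varepsilon i\in Q$, $\varepsilon(i+\xi)\notin Q$) involve only the values of $\ell_\nu^\varepsilon$ and contribute $o(1)$. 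Hence $m_\varepsilon(\nu)\le F_\varepsilon(u_\varepsilon,Q)+o(1)$.

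Finally I would pass to the limits in order. Letting $\varepsilon\to0$ first, using $\limsup_\varepsilon F_\varepsilon(w_\varepsilon,Q)\le\varphi(\nu)$, $\|w_\varepsilon-\ell_\nu^\varepsilon\|_{L^1(Q)}\to0$, $\limsup_\varepsilon F_\varepsilon(\ell_\nu^\varepsilon,Q\setminus\overline{Q_{2t}})\le C\,|Q\setminus\overline{Q_{2t}}|$ and $\limsup_\varepsilon F_\varepsilon(\ell_\nu^\varepsilon,Q)\le C$, we obtain
\begin{equation*}
\limsup_{\varepsilon\to0}m_\varepsilon(\nu)\le\varphi(\nu)+C\,|Q\setminus\overline{Q_{2t}}|+\frac1M\big(\varphi(\nu)+C\big);
\end{equation*}
letting $t\to0$ removes the middle term (as $|Q\setminus\overline{Q_{2t}}|\to0$), and then $M\to\infty$ yields $\limsup_{\varepsilon\to0}m_\varepsilon(\nu)\le\varphi(\nu)=m(\nu)$. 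I expect the only delicate point to be the gluing step: making the De Giorgi slicing estimate precise for the anisotropic, nonlocal, $1$-homogeneous functionals $F_\varepsilon$, that is, bounding the switching cost on the chosen sub-shell by $\tfrac{C}{t}\|w_\varepsilon-\ell_\nu^\varepsilon\|_{L^1}$ rather than by an uncontrolled $L^\infty$ term (which is exactly what forces the splitting into $M$ sub-shells and the final $M\to\infty$ limit), and carefully accounting for the interactions straddling the interfaces; the remaining ingredients — periodicity of the extension, the energy estimate for $\ell_\nu^\varepsilon$, and the identity $m(\nu)=\varphi(\nu)$ — are routine.
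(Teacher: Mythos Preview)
Your proposal is correct and follows essentially the same strategy as the paper: identify $m(\nu)=\varphi(\nu)$ via the cell formula, take a recovery sequence for the affine function $x\mapsto\langle\nu,x\rangle$, and glue it to the discretized affine function $\ell_\nu^\varepsilon$ across a thin shell near $\partial Q$ by a De Giorgi--type averaging/slicing argument, obtaining an admissible $1$-periodic competitor for $m_\varepsilon(\nu)$.

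The only difference is in how the slicing is implemented. The paper works on the enlarged cube $(1+\eta)Q$, takes shells of width one lattice spacing $\varepsilon M$ (a \emph{hard} switch) and lets the number of shells $k_\varepsilon$ depend on $\varepsilon$ via $\delta_\varepsilon/\varepsilon\ll k_\varepsilon\ll\eta/\varepsilon$, so that the pigeonhole argument directly gives a switching cost of order $\delta_\varepsilon/(k_\varepsilon\varepsilon)=o(1)$; the remaining error $C\eta$ disappears as $\eta\to0$. You instead fix $M$ sub-shells of macroscopic width $t/M$, use a Lipschitz cut-off (discretized to the lattice) to interpolate, and obtain an error $\tfrac{C}{t}\|w_\varepsilon-\ell_\nu^\varepsilon\|_{L^1}+\tfrac{C}{M}(\cdots)+C|Q\setminus\overline{Q_{2t}}|$, removed by sending $\varepsilon\to0$, then $t\to0$, then $M\to\infty$. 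Both variants are standard; the paper's choice avoids the extra limit in $M$ at the price of an $\varepsilon$-dependent number of layers, while yours is closer to the continuum fundamental-estimate template.
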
 
\begin{proof}
 Fix $\eta >0$ and let $M := 2\sup_{\xi \in V} ||\xi||_\infty$, Let $u^\eta_\varepsilon \to \nu x$ be such that 
\begin{align} \label{nu approx}
\limsup_{\varepsilon\to 0} F_\varepsilon(u^\eta_\varepsilon,(1+\eta)Q) \leq F(\nu x,(1+\eta Q) \leq (1+\eta)^d \varphi(\nu).
\end{align} 
We now modify $u^\eta_\varepsilon$ such that it can be used as a test function for $m_\varepsilon(\nu)$. Set
\begin{align*}
\delta_\varepsilon = \int_Q |u_\varepsilon^\eta-\nu x|\mathrm{d}x
\end{align*}
and let $\varepsilon >0 $ be small enough and $k_\varepsilon \in \mathbb{N} $ be such that
\begin{align*}
\frac{\delta_\varepsilon}{\varepsilon}<<k_\varepsilon << \frac{\eta}{\varepsilon}
\end{align*}
and set $Q^i_{\eta,\varepsilon} = Q_{1-\eta-i \varepsilon M}$. Then we get
\begin{align*}
\delta_\varepsilon \geq \int_{Q \setminus Q_{(1-2\eta)}} |u_\varepsilon^\eta-\nu x|\mathrm{d}x\geq \sum_{i=0}^{k_\varepsilon-1}\int_{Q^i_{\eta,\varepsilon}\setminus Q^{i+1}_{\eta,\varepsilon} }|u_\varepsilon^\eta-\nu x|\mathrm{d}x,
\end{align*}
hence there exists $i_\varepsilon \in \{0,\cdots,k_\varepsilon\}$ such that
\begin{align}\label{smallness on the strip}
\sum_{i \in\left(Q^{i_\varepsilon}_{\eta,\varepsilon}\setminus Q^{i_\varepsilon+1}_{\eta,\varepsilon} \right)_\varepsilon } \varepsilon^d |(u_\varepsilon^\eta)_{\varepsilon i} - (\nu x)_{\varepsilon i}| \leq  C\int_{Q^{i_\varepsilon}_{\eta,\varepsilon}\setminus Q^{i_\varepsilon+1}_{\eta,\varepsilon} }|u_\varepsilon^\eta-\nu x|\mathrm{d}x << \frac{\delta_\varepsilon}{k_\varepsilon}<< \varepsilon,
\end{align}
 Now define $v^\eta_\varepsilon \in \mathcal{PC}_{\varepsilon}((1+\eta)Q)$ by
\begin{align*}
v_\varepsilon^\eta(i) =\begin{cases}
u^\eta_\varepsilon(i) & i \in Q^{i_\varepsilon}_{\eta,\varepsilon} \cap \varepsilon \mathbb{Z}^d \\
\nu i & \text{ otherwise on } \varepsilon \mathbb{Z}^d.
\end{cases}
\end{align*}
Note that, since $v^\eta_\varepsilon = \nu i $ on $(1+\eta)Q\setminus (1-\eta)Q$ it can be extended to the whole of $\mathbb{R}^d$ so that $v^\eta_\varepsilon  -\nu x$ is $1$-periodic. Thus we have 
\begin{align*}
m_\varepsilon(\nu) &\leq \sum_{\xi \in V} \sum_{i \in \left( [-\frac{1}{2},\frac{1}{2})^d\right)_\varepsilon} \varepsilon^{d-1} c_{i,\xi}^\varepsilon |(v_\varepsilon^\eta)_{\varepsilon i} - (v_\varepsilon^\eta)_{\varepsilon (i+\xi)} | \leq F_\varepsilon(v_\varepsilon^\eta,(1+\eta)Q) \\&\leq F_\varepsilon(u_\varepsilon^\eta,(1+\eta)Q) +F_\varepsilon(\nu x,(1+\eta)Q\setminus (1-2\eta)Q) \\&+ \sum_{\xi \in V} \sum_{i \in \left(Q^{i_\varepsilon}_{\eta,\varepsilon}\setminus Q^{i_\varepsilon+1}_{\eta,\varepsilon} \right)_\varepsilon} \varepsilon^{d-1} c_{i,\xi}^\varepsilon \left(|(u)_{\varepsilon i} - (\nu x)_{\varepsilon (i +\xi)}| + |(u)_{\varepsilon (i+\xi)} - (\nu x)_{\varepsilon  i}|\right).
\end{align*}
Noting that 
\begin{align*}
|u_{\varepsilon (i+\xi)} - (v )_{\varepsilon  i}| \leq |(u)_{\varepsilon (i+\xi)} - (v)_{\varepsilon  (i+\xi)}| +|(v)_{\varepsilon (i+\xi)} - (v)_{\varepsilon  i}|,
\end{align*}
 using (\ref{smallness on the strip}) and the growth conditions in Remark \ref{F,E} we obtain
 \begin{align*}
 m_\varepsilon(\nu) \leq F_\varepsilon(u_\varepsilon^\eta,(1+\eta)Q) +C \eta^d +o(1).
 \end{align*}
 Therefore by (\ref{nu approx}) we obtain
 \begin{align*}
 \limsup_{\varepsilon \to 0} m_\varepsilon(\nu) \leq (1+\eta)^d \varphi(\nu) + C\eta^d.
 \end{align*}
The claim follows by letting $\eta \to 0$.

\end{proof}

\begin{proposition} \label{Liminf minima} Let $F_\varepsilon : BV(\Omega) \times \mathcal{A}(\Omega) \to [0,\infty]$ be defined as in (\ref{Fs}) with $c^{\varepsilon}_{i,\xi}$ $\frac{1}{\varepsilon}$-periodic and let $F_\varepsilon$ $\Gamma$-converge with respect to the strong $L^1(\Omega)$-topology to the functional $F : BV(\Omega) \times \mathcal{A}(\Omega) \to [0,\infty]$ given by Remark \ref{F,E} and of the form as in Proposition \ref{CellFormula}. It then holds
\begin{equation}
\liminf_{\varepsilon \to 0} m_\varepsilon(\nu) \geq m(\nu).
\end{equation}
\end{proposition}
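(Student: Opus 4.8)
The goal is to show the liminf inequality $\liminf_{\varepsilon\to 0} m_\varepsilon(\nu)\ge m(\nu)$, which is the complement of Proposition \ref{Limsup minima}. The natural strategy is to take, for each $\varepsilon$, a near-optimal competitor $u_\varepsilon\in\mathcal{PC}_\varepsilon(\mathbb{R}^d)$ for $m_\varepsilon(\nu)$ with $u_\varepsilon-\nu x$ $1$-periodic, and to use it to build a competitor for $m(\nu)$ after passing to a limit. First I would record the uniform bound: by the growth conditions in Remark \ref{F,E}, $\frac{1}{C}|Du_\varepsilon|(Q)\le \sum_{\xi,i}\varepsilon^{d-1}c^\varepsilon_{i,\xi}|u_{\varepsilon i}-u_{\varepsilon(i+\xi)}|\le m_\varepsilon(\nu)+o(1)$, which (together with the fact that $u_\varepsilon-\nu x$ is $1$-periodic, so the average of $u_\varepsilon$ over $Q$ is controlled) gives a uniform $BV(Q)$ bound and hence, up to a subsequence realizing the liminf, $u_\varepsilon\to u$ in $L^1(Q)$ with $u-\nu x$ still $1$-periodic (periodicity passes to the $L^1$ limit, and we may extend $u$ to $BV_{\rm loc}(\mathbb{R}^d)$ by periodicity of $u-\nu x$).

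The crux is then a lower-semicontinuity/$\Gamma$-liminf statement on the unit cell: I want $\liminf_\varepsilon F_\varepsilon(u_\varepsilon,Q)\ge F(u,Q)$. This is \emph{not} immediate from the $\Gamma$-convergence in Remark \ref{F,E}, because that is stated for the ambient functionals on $BV(\Omega)$ with a \emph{fixed} open set, whereas here the competitors live on the cell $Q$ with a periodicity boundary condition and the coefficients are $\frac1\varepsilon$-periodic. The standard device is to ``unfold'': fix a large open set $\Omega'\supset\supset Q$ (say $\Omega'=(1+\eta)Q$ or a large ball), and note that the $1$-periodic extension $\tilde u_\varepsilon$ of $u_\varepsilon$ (more precisely $u_\varepsilon$ near $\nu x$) is an admissible competitor on $\Omega'$ with $\tilde u_\varepsilon\to \tilde u$ in $L^1(\Omega')$; by the $\Gamma$-liminf inequality for $F_\varepsilon$ on $\Omega'$ we get $\liminf_\varepsilon F_\varepsilon(\tilde u_\varepsilon,\Omega')\ge F(\tilde u,\Omega')$. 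Since $\tilde u_\varepsilon$ has period commensurate with (a subdivision of) the unit cell and the coefficients are $\frac1\varepsilon$-periodic, the energy $F_\varepsilon(\tilde u_\varepsilon,\Omega')$ equals (up to boundary terms of order $o(1)$, since $\partial\Omega'$ meets $O(\varepsilon^{-(d-1)})$ cells and interactions have bounded range $M$) the number of unit cells in $\Omega'$ times $F_\varepsilon(u_\varepsilon,Q)$, and similarly $F(\tilde u,\Omega')=|\Omega'|\,F(u,Q)/|Q|$ by the integral form $F(v,A)=\frac12\int_A\varphi(Dv)$ and periodicity. Dividing by the number of cells yields $\liminf_\varepsilon F_\varepsilon(u_\varepsilon,Q)\ge F(u,Q)$.

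Once this is in hand the conclusion is short: $F(u,Q)=\frac12\int_Q\varphi(Du)\ge \frac12\int_{[0,1)^d}\varphi(Du)$, and since $u-\nu x$ is $1$-periodic, $u$ is an admissible competitor in the definition of $m(\nu)$, so $F(u,[0,1)^d)\ge m(\nu)$. Combining, $\liminf_\varepsilon m_\varepsilon(\nu)=\lim_\varepsilon \big(\sum_{\xi,i}\varepsilon^{d-1}c^\varepsilon_{i,\xi}|u_{\varepsilon i}-u_{\varepsilon(i+\xi)}|+o(1)\big)\ge\liminf_\varepsilon F_\varepsilon(u_\varepsilon,Q)\ge F(u,Q)\ge m(\nu)$, where in the first step one must be mildly careful that the half-open cube $[-\frac12,\frac12)^d$ and $Q$ differ by a null set and that the interactions counted in $m_\varepsilon$ (indices $i$ in the cell, possibly reaching outside by the bounded amount $M$) agree with those in $F_\varepsilon(u_\varepsilon,Q)$ up to lower-order terms, exactly as in the proof of Proposition \ref{Limsup minima}.

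\textbf{Main obstacle.} The delicate point is the unfolding step: justifying that the cell energy is (asymptotically) the average of the energy on a large set, i.e.\ controlling the mismatch between the periodicity of the test function $u_\varepsilon$ (period $\varepsilon$ after rescaling, matching the scale of the coefficients) and the boundary layer where $u_\varepsilon$ is glued to $\nu x$. Here the bounded interaction range $M=2\sup_{\xi\in V}\|\xi\|_\infty$ and the already-established growth/compactness bounds make all these boundary contributions $o(\varepsilon^{-(d-1)})\cdot\varepsilon^{d-1}=o(1)$ relative to the leading order, but writing this cleanly — and ensuring the extended function is genuinely admissible with the exact periodicity required by $m(\nu)$ — is where the real work lies.
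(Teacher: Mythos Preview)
Your overall strategy—pick near-minimizers $u_\varepsilon$, extract a $BV_{\rm loc}$ limit $u$ with $u-\nu x$ $1$-periodic via the growth bounds and Poincar\'e, then feed this into a $\Gamma$-liminf inequality and compare with $m(\nu)$—is exactly the paper's. The difference is in how you pass from the $\Gamma$-liminf on an open set to the half-open-cube quantity $m(\nu)$.

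Your unfolding device is correct in spirit but wobbly as written. With $\Omega'=(1+\eta)Q$ there is no meaningful ``number of unit cells'' to divide by, and the claimed identity $F_\varepsilon(\tilde u_\varepsilon,\Omega')\approx(\text{\#cells})\cdot F_\varepsilon(u_\varepsilon,Q)$ is not available. The version that does work is $\Omega'=NQ$ with $N\in\mathbb N$: by $1$-periodicity of both $u_\varepsilon-\nu x$ and (in the space variable) of $c^\varepsilon$, one gets $F_\varepsilon(u_\varepsilon,NQ)\le N^d(m_\varepsilon(\nu)+o(1))$, while on the limit side the half-open cubes tile exactly, giving $F(u,NQ)\ge N^d\int_{[-\frac12,\frac12)^d}\varphi(Du)-CN^{d-1}\ge N^d m(\nu)-CN^{d-1}$; divide by $N^d$ and send $N\to\infty$. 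So your route can be salvaged, but it costs an extra limit in $N$ and some boundary bookkeeping.

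The paper avoids all of this with a one-line trick that you may find worth remembering: since $|Du|$ is a locally finite measure, choose $x_0\in\mathbb R^d$ with $|Du|(\partial Q(x_0))=0$. By $1$-periodicity of the summand (both $c^\varepsilon$ and $|u^\varepsilon_{\varepsilon i}-u^\varepsilon_{\varepsilon(i+\xi)}|$ are $1$-periodic in the space variable), the $m_\varepsilon$-sum over $[-\tfrac12,\tfrac12)^d$ equals the sum over $x_0+[-\tfrac12,\tfrac12)^d$, and likewise $\int_{[-\frac12,\frac12)^d}\varphi(Du)=\int_{Q(x_0)}\varphi(Du)$. Now apply the $\Gamma$-liminf directly on the open cube $Q(x_0)$, where there is no boundary charge to worry about, and conclude in one step. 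This is cleaner than the averaging argument and sidesteps precisely the ``main obstacle'' you flagged.
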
 
\begin{proof} Let $u^\varepsilon \in \mathcal{PC}_{\varepsilon}(\mathbb{R}^d)$ be such that $u_\varepsilon-\nu x$ is $1$-periodic, $\fint_Q u_\varepsilon =0$ and
\begin{align}\label{bound me}
\sum_{\xi \in V} \sum_{i \in \left( [-\frac{1}{2},\frac{1}{2})^d\right)_\varepsilon} \varepsilon^{d-1} c^\varepsilon_{i,\xi}|(u^\varepsilon)_{\varepsilon i } -(u^\varepsilon)_{\varepsilon(i+\xi)}| \leq m_\varepsilon(\nu) +\varepsilon.
\end{align}
Since $u^\varepsilon -\nu x$ is $1$-periodic, $c^\varepsilon_{i,\xi}$ are $\frac{1}{\varepsilon}$-periodic and by the growth condition of $F_\varepsilon$ we have that 
\begin{align*}
\sup_{\varepsilon >0} |Du^\varepsilon|(B_R) \leq C_R <+\infty
\end{align*}
Hence by Poincar\'e Inequality we have that 
\begin{align*}
\sup_{\varepsilon > 0 } ||u^\varepsilon||_{BV(B_R)} <\infty,
\end{align*}
and therefore up to subsequences we have that $u^\varepsilon \to u $ in $L^1_{loc}(\mathbb{R}^d)$, $u \in BV_{loc}(\mathbb{R}^d)$ and $u-\nu x$ is $1$-periodic. In order to use $u$ as a test function for $m$, which can be compared to $m_\varepsilon$ it is necessary to translate it so that it does not concentrate energy on the boundary. Choose $x_0 \in \mathbb{R}^d$, such that $|Du|(\partial Q(x_0))=0$ and by the $1$-periodicity of $u_\varepsilon -\nu x, u-\nu x$, $\frac{1}{\varepsilon}$-periodicity of $c_{i,\xi}^\varepsilon$ respectively we have that
\begin{align*}
&\frac{1}{2}\sum_{\xi \in V} \sum_{i \in \left([-\frac{1}{2},\frac{1}{2})^d\right)_\varepsilon}\varepsilon^{d-1} c_{i,\xi}^\varepsilon |(u^\varepsilon)_{\varepsilon i} - (u^\varepsilon)_{\varepsilon(i+\xi)}|=\frac{1}{2}\sum_{\xi \in V} \sum_{i \in \left(x_0+[-\frac{1}{2},\frac{1}{2})^d\right)_\varepsilon}\varepsilon^{d-1} c_{i,\xi}^\varepsilon |(u^\varepsilon)_{\varepsilon i} - (u^\varepsilon)_{\varepsilon(i+\xi)}| 
\end{align*}
and
\begin{align*}
\int_{\left[-\frac{1}{2},\frac{1}{2}\right)^d} \varphi(Du) = \int_{x_0 + \left[-\frac{1}{2},\frac{1}{2}\right)^d} \varphi(Du) = \int_{Q(x_0)} \varphi(Du).
\end{align*} 
Using (\ref{bound me}) and using that $F_\varepsilon$  $\Gamma$-converges to  $F$ we have that
\begin{align*}
\liminf_{\varepsilon \to 0} m_\varepsilon(\nu) &\geq \liminf_{\varepsilon \to 0} \frac{1}{2}\sum_{\xi \in V} \sum_{i \in \left([-\frac{1}{2},\frac{1}{2})^d\right)_\varepsilon}\varepsilon^{d-1} c_{i,\xi}^\varepsilon |(u^\varepsilon)_{\varepsilon i} - (u^\varepsilon)_{\varepsilon(i+\xi)}| \\&= \liminf_{\varepsilon \to 0}\frac{1}{2}\sum_{\xi \in V} \sum_{i \in \left(x_0+[-\frac{1}{2},\frac{1}{2})^d\right)_\varepsilon}\varepsilon^{d-1} c_{i,\xi}^\varepsilon |(u^\varepsilon)_{\varepsilon i} - (u^\varepsilon)_{\varepsilon(i+\xi)}|\\&\geq \liminf_{\varepsilon \to 0} F_\varepsilon(u^\varepsilon,Q(x_0))\geq F(u,Q(x_0)) = \int_{[-\frac{1}{2},\frac{1}{2})^d} \varphi(Du)\geq m(\nu)
\end{align*}
and the claim follows.
\end{proof}
\begin{proof}[Proof of Theorem \ref{General}:] 
 Let $ \theta \in [0,1], \varphi$ be given as in Theorem \ref{General}. By Theorem \ref{Gammaclosure}, we know that there exist a sequence $\varepsilon \to 0$ and $\{c^\varepsilon_{ij}\}_\varepsilon$ $\frac{1}{\varepsilon}$-periodic, $\theta(\{c^\varepsilon_{i,\xi}\}) \to \theta$ such that if we define $E_{\eta,\varepsilon} : P_{\eta  \varepsilon}(\Omega, \{\pm 1\}) \times \mathcal{A}(\Omega) \to [0,+\infty]$ by 
\begin{align*}
E_{\eta ,\varepsilon}(u,A) = \frac{1}{4}\sum_{\xi \in V} \sum_{ i,i+\xi \in A_{\eta\varepsilon}}(\eta\varepsilon)^{d-1} c^\varepsilon_{i,\xi}(u_{\eta\varepsilon i}-u_{\eta\varepsilon(i+\xi)} )^2 
\end{align*}
we have that $E_{1,\varepsilon} \overset{\Gamma}{\to} E$, where $E: BV(\Omega,\{\pm 1\}) \to [0,+\infty]$ is defined by 
\begin{align*}
E(u,A) = \int_{\partial^*\{u=1\} \cap A} \varphi(\nu) \mathrm{d}\mathcal{H}^{d-1}.
\end{align*}
Introducing the auxiliary functionals $F_{\eta, \varepsilon} : P_{\eta \varepsilon}(\Omega) \times \mathcal{A}(\Omega) \to [0,+\infty)$ as in (\ref{Fs}) by [\cite{AS},Thm 4.7],  up to subsequences it holds that 
\begin{align}
\Gamma \text{-} \lim_{\eta \to 0} E_{\eta, \varepsilon}(u,A) = \int_{\partial^*\{u=1\} \cap A } \varphi_\varepsilon(\nu)\mathrm{d}\mathcal{H}^{d-1} =: E_\varepsilon(u,A) \label{Varphi e}
\end{align}
and by Remark \ref{F,E} 
\begin{align}
&\Gamma \text{-} \lim_{\varepsilon \to 0} F_{1, \varepsilon}(u,A) = \frac{1}{2}\int_A \varphi(Du) =: F(u,A),\label{Varphi F} \\
&\Gamma \text{-} \lim_{\eta \to 0} F_{\eta, \varepsilon}(u,A) = \frac{1}{2}\int_A \varphi_\varepsilon(Du) =: F_\varepsilon(u,A).
\end{align}
The normalization factor of $\frac{1}{2}$ appears in order that the functionals $E$ and $F$ agree on functions in $BV(\Omega,\{\pm 1\})$ (as mentioned in Remark \ref{F,E}).
By (\ref{Varphi e}), noting that the period of the coefficients is fixed with fixed $\varepsilon$, we have that $\varphi_\varepsilon \in \textbf{H}_{\alpha,\beta,V}(\theta)$. We have that
\begin{equation}\label{SingleCell}
\begin{split}
&\inf\Big\{ \sum_{\xi \in V} \sum_{i \in \ [-\frac{1}{2\varepsilon\eta},\frac{1}{2\varepsilon\eta})^d \cap \mathbb{Z}^d }(\varepsilon\eta)^{d-1}c_{i,\xi}^\varepsilon |u_{\varepsilon \eta i} - u_{\varepsilon \eta (i+\xi)}| : u \in \mathcal{PC}_{\varepsilon\eta}(\mathbb{R}^d), u-\nu x 1\text{-periodic}\Big\}\\&=\inf\Big\{ \sum_{\xi \in V} \sum_{i \in \left( [-\frac{1}{2},\frac{1}{2})^d\right)_{\varepsilon} }\varepsilon^{d-1}c_{i,\xi}^\varepsilon |u_{\varepsilon i} - u_{\varepsilon  (i+\xi)}| : u \in \mathcal{PC}_{\varepsilon}(\mathbb{R}^d), u-\nu x 1\text{-periodic}\Big\}
\end{split}
\end{equation}
In fact for every $u \in \mathcal{PC}_{\varepsilon\eta}(\mathbb{R}^d)$ with $u-\nu x$ is $ 1$-periodic we can define $\tilde{u} \in \mathcal{PC}_{\varepsilon}(\mathbb{R}^d)$ , $\tilde{u}-\nu x$  $1$-periodic by setting
\begin{align*}
\tilde{u}(z)= \eta^{d-1}\sum_{i \in \left( [-\frac{1}{2},\frac{1}{2})^d\right)_\eta} u(\eta (z + i)) 
\end{align*} 
such that by convexity, positive 1-homogeneity and the periodicity of the $c_{i,\xi}^\varepsilon$ 
\begin{align*}
\sum_{\xi \in V} \sum_{i \in \left( [-\frac{1}{2},\frac{1}{2})^d\right)_{\varepsilon\eta} }(\varepsilon\eta)^{d-1}c_{i,\xi}^\varepsilon |u_{\varepsilon \eta i} - u_{\varepsilon \eta (i+\xi)}| \geq   \sum_{\xi \in V} \sum_{i \in \left( [-\frac{1}{2},\frac{1}{2})^d\right)_{\varepsilon} }\varepsilon^{d-1}c_{i,\xi}^\varepsilon |\tilde{u}_{\varepsilon i} - \tilde{u}_{\varepsilon  (i+\xi)}|
\end{align*}
holds.
On the other hand for every $u \in \mathcal{PC}_{\varepsilon}(\mathbb{R}^d)$ ,$u-\nu x \, 1$-periodic one defines $\tilde{u} \in \mathcal{PC}_{\varepsilon \eta}(\mathbb{R}^d)$, $\tilde{u}-\nu x\,1$-periodic by
\begin{align*}
\tilde{u}(z)= \eta \, u(\frac{z}{\eta})
\end{align*}
for which by convexity, positive 1-homogeneity and the periodicity of the $c_{i,\xi}^\varepsilon$ 
\begin{align*}
\sum_{\xi \in V} \sum_{i \in \left( [-\frac{1}{2},\frac{1}{2})^d\right)_{\varepsilon\eta} }(\varepsilon\eta)^{d-1}c_{i,\xi}^\varepsilon |\tilde{u}_{\varepsilon \eta i} - \tilde{u}_{\varepsilon \eta (i+\xi)}| \leq   \sum_{\xi \in V} \sum_{i \in \left( [-\frac{1}{2},\frac{1}{2})^d\right)_{\varepsilon} }\varepsilon^{d-1}c_{i,\xi}^\varepsilon |u_{\varepsilon i} - u_{\varepsilon  (i+\xi)}|
\end{align*}
holds.
By (\ref{Limsup minima}), (\ref{Liminf minima}),(\ref{SingleCell}) and (\ref{Varphi F}) we have that for all $\nu \in S^{d-1}$ it holds
\begin{align*}
&\lim_{\varepsilon \to 0}\frac{1}{2}\varphi_\varepsilon(\nu)=\lim_{\varepsilon \to 0} m_\varepsilon(\nu)= m(\nu) = \frac{1}{2}\varphi(\nu)
\end{align*}
and therefore $\varphi \in \textbf{H}_{\alpha,\beta,V}(\theta)$.
\end{proof}
\section{Localization Principle}
The goal of this section is the computation of the \textit{G-closure} of mixtures, i.e.~all possible limits of mixtures where the interaction coefficients $\{c_{i,\xi}\}$ need not be periodic anymore. We show a \textit{localization principle}, which says that this computation can be reduced to the optimal bounds of periodic mixtures. We state this in the two main theorems below.

\begin{remark} \rm

We deal with surface energies $E : BV(\Omega; \{ \pm 1\}) \times \mathcal{A}(\Omega) \to [0,\infty)$ of the form 
\begin{align}
E(u,A) = \int_{\partial^*\{u=1\}\cap A} g(x,\nu_u(x))\mathrm{d}\mathcal{H}^{d-1},
\end{align}
where $g : \Omega \times \mathbb{R}^d \to [0,\infty)$ satisfies
\begin{align}
\frac{1}{C}|\nu| \leq g(x,\nu) \leq C|\nu| 
\end{align}
 for all  $(x,\nu) \in \Omega \times \mathbb{R}^d$ and $E(\cdot,A) $ is $L^1(A)$-lower semicontinuous. 

Let $\varphi : \Omega \times \mathbb{R}^d\to[0,+\infty)$ defined by
\begin{align} \label{blowup}
\varphi(x,\nu) = \limsup_{\rho \to 0} \frac{m(x,\nu,\rho)}{w_{d-1}r^{d-1}},
\end{align}
where $m(x,\nu,\rho) : \Omega \times \mathbb{R}^d \times (0,\mathrm{dist}(x,\partial \Omega)) \to [0,\infty) $ is the one homogeneous extension in the second variable of
\begin{equation}\label{minimumproblem}
\begin{split}
m(x,\nu,\rho) =  \inf \{ E&(v,B_\rho(x)) : v \in BV(\Omega; \{ \pm 1\}) \\& v = u_{x,\nu} \text{ in a neighborhood of } \partial B_\rho(x)\}.
\end{split}
\end{equation}
By \cite{BFM} we have that
\begin{align}
E(u,A) = \int_{\partial^*\{u=1\}\cap A} \varphi(x,\nu_u(x))\mathrm{d}\mathcal{H}^{d-1}
\end{align}
for all $(u,A) \in BV(\Omega;\{\pm 1\}) \times \mathcal{A}(\Omega)$, so that $\varphi$ is equivalent for $g$ in our considerations. 
We will therefore establish the \textit{localization principle} for $\varphi$.
\end{remark}

For $c_{i,\xi}^\varepsilon \in \{\alpha_{\xi},\beta_\xi\}^{\Omega_\varepsilon}$ we define (with abuse of notation) the {\em local volume fraction of $\beta$-bonds}  by
\begin{align}\label{localVolumefractions}
\theta_\xi(\{ c^\varepsilon_{i,\xi} \}) = \underset{c^\varepsilon_{i,\xi}=\beta_\xi}{\sum_{i\in \Omega_\varepsilon}} \varepsilon^d \delta_{\varepsilon i}, \quad \theta(\{ c^\varepsilon_{i,\xi} \}) = \frac{1}{\# V}\sum_{\xi \in V}\underset{c^\varepsilon_{i,\xi}=\beta_\xi}{\sum_{i\in \Omega_\varepsilon}} \varepsilon^d \delta_{\varepsilon i}.
\end{align}
\begin{theorem}\label{Localization 2} Let $\{c^\varepsilon_{i,\xi}\}_\varepsilon\in \{\alpha_\xi,\beta_\xi\}^{\Omega_\varepsilon}$ and let 
\begin{align}
E_\varepsilon (u) = \frac{1}{4}\sum_{\xi \in V}\sum_{i,i+\xi \in \Omega_\varepsilon} \varepsilon^{d-1} c^\varepsilon_{i,\xi} (u_{\varepsilon i}-u_{\varepsilon(i+\xi)})^2.
\end{align}
Assume that $ \theta(\{c_{i,\xi}^\varepsilon\}) \overset{*}{\rightharpoonup} \theta$ and $E_\varepsilon \,\, \Gamma$-converges to $E:BV(\Omega;\{\pm 1\}) \to [0,\infty)$ given by
\begin{align*}
E(u) = \int_{\partial^*\{u=1\}\cap \Omega} \varphi(x,\nu_u(x))\mathrm{d}\mathcal{H}^{d-1}
\end{align*}
with $\varphi$ satisfying (\ref{blowup}). Then $\varphi(x,\cdot) \in \textbf{H}_{\alpha,\beta,V}(\theta(x))$ for a.e. $x \in \Omega$.

\end{theorem}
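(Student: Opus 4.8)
The plan is first to extract, by a diagonal argument, a subsequence along which each measure $\mu^\xi_\varepsilon:=\sum_{i:\,c^\varepsilon_{i,\xi}=\beta_\xi}\varepsilon^d\delta_{\varepsilon i}$ converges weakly-$*$ to $\theta_\xi\,\mathcal L^d$ with $0\le\theta_\xi\le1$; since $\frac1{\#V}\sum_{\xi\in V}\mu^\xi_\varepsilon=\theta(\{c^\varepsilon_{i,\xi}\})\overset{*}{\rightharpoonup}\theta$, this forces $\theta=\frac1{\#V}\sum_{\xi\in V}\theta_\xi$ a.e. By the compactness and representation results of \cite{AS}, $\varphi(x,\cdot)$ is, for a.e.\ $x$, even, convex, positively $1$-homogeneous with $\frac1C|\cdot|\le\varphi(x,\cdot)\le C|\cdot|$, so it is enough to prove that for a.e.\ $x_0$ and every $\nu\in S^{d-1}$
\begin{equation}\label{PP-goal}
\sum_{\xi\in V}\alpha_\xi|\langle\nu,\xi\rangle|\ \le\ \varphi(x_0,\nu)\ \le\ \sum_{\xi\in V}\bigl(\theta_\xi(x_0)\beta_\xi+(1-\theta_\xi(x_0))\alpha_\xi\bigr)|\langle\nu,\xi\rangle|,
\end{equation}
which I would read off from the blow-up formula (\ref{blowup})--(\ref{minimumproblem}). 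For the left inequality I would compare $E_\varepsilon$ from below with the (periodic) energy $E^\alpha_\varepsilon$ obtained by freezing all coefficients at $\alpha_\xi$: by $\Gamma$-convergence $E\ge E^\alpha$, whose density lies in $\mathbf{H}_{\alpha,\beta,V}(0)$ and hence equals $\varphi^\alpha(\nu)=\sum_{\xi\in V}\alpha_\xi|\langle\nu,\xi\rangle|$ by Theorem \ref{BoundsTheorem}. Comparing the minimum problems (\ref{minimumproblem}) gives $m(x_0,\nu,\rho)\ge m^\alpha(x_0,\nu,\rho)$, and since $\varphi^\alpha$ is convex, even and $1$-homogeneous — hence BV-elliptic — the half-space function $u_{x_0,\nu}$ is optimal for $m^\alpha$, so $m^\alpha(x_0,\nu,\rho)=w_{d-1}\rho^{d-1}\varphi^\alpha(\nu)$; inserting this into (\ref{blowup}) yields $\varphi(x_0,\nu)\ge\varphi^\alpha(\nu)$ for \emph{every} $x_0,\nu$.

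\textbf{Upper bound: the construction.} The real work is the right inequality in \eqref{PP-goal}. I would fix $\delta\in(0,\tfrac12)$, a rational direction $\nu=p/|p|$ with $p\in\Z^d$, $\gcd(p)=1$, and a point $x_0$ that is a Lebesgue point of every $\theta_\xi$. Fix $p'\in\Z^d$ with $\langle p',p\rangle=1$, and for small $\varepsilon$ put $L=L_\varepsilon=\lfloor|p|\delta\rho/\varepsilon\rfloor$. For $\ell=0,\dots,L-1$ take $v^\varepsilon_\ell\in\mathcal{PC}_\varepsilon(\Omega,\{\pm1\})$ equal to $u_{x_0+\varepsilon\ell p',\nu}$ on $B_{(1-\delta)\rho}(x_0)$ and interpolated back to $u_{x_0,\nu}$ outside $B_{(1-\delta)\rho}(x_0)$. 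Since $\langle\varepsilon\ell p',\nu\rangle=\varepsilon\ell/|p|\le\delta\rho$, the transition ``wall'' in the annulus has $\mathcal H^{d-1}$-measure $\le C\delta\rho^{d-1}$, contributing at most $C\delta\rho^{d-1}$ to $E_\varepsilon(v^\varepsilon_\ell,B_\rho(x_0))$. For $\langle\xi,\nu\rangle>0$ the $\xi$-bonds broken by $v^\varepsilon_\ell$ inside $B_{(1-\delta)\rho}(x_0)$ fill the slab $S_\xi(\ell)=\{i:\langle\varepsilon i-x_0,\nu\rangle\in(\varepsilon\ell/|p|-\varepsilon\langle\xi,\nu\rangle,\ \varepsilon\ell/|p|)\}$ (symmetrically for $\langle\xi,\nu\rangle<0$; nothing for $\langle\xi,\nu\rangle=0$). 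The key point is that $|p|\langle\xi,\nu\rangle=\langle\xi,p\rangle$ is a positive integer, so each lattice point of $R^\varepsilon_\xi:=\bigcup_{\ell<L}S_\xi(\ell)$ lies in exactly $\langle\xi,p\rangle$ of the slabs $\{S_\xi(\ell)\}_{\ell\in\Z}$ apart from a bounded boundary layer of $R^\varepsilon_\xi$ — and this single family of translations works for all $\xi\in V$ at once. Averaging over $\ell$ and exchanging the two sums should then give
\begin{equation}\label{PP-avg}
\frac1{L}\sum_{\ell=0}^{L-1}E_\varepsilon\bigl(v^\varepsilon_\ell,B_\rho(x_0)\bigr)\ \le\ \sum_{\xi\in V}\frac{\langle\xi,\nu\rangle}{\delta\rho}\,\varepsilon^d\!\!\sum_{i\in R^\varepsilon_\xi}c^\varepsilon_{i,\xi}\ +\ C\delta\rho^{d-1}+o_\varepsilon(1),
\end{equation}
where $R^\varepsilon_\xi$ is a neighbourhood of $\Pi_\nu(x_0)\cap B_{(1-\delta)\rho}(x_0)$ of thickness $\sim\delta\rho$ and $o_\varepsilon(1)$ absorbs the boundary layers and the rounding of $L$.

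\textbf{Upper bound: passage to the limit.} Letting $\varepsilon\to0$, the sets $R^\varepsilon_\xi$ fill $R:=\{y\in B_{(1-\delta)\rho}(x_0):0\le\langle y-x_0,\nu\rangle\le\delta\rho\}$, whose boundary is $\mathcal L^d$-negligible; writing $\varepsilon^d\sum_{i\in R^\varepsilon_\xi}c^\varepsilon_{i,\xi}=\beta_\xi\mu^\xi_\varepsilon(R^\varepsilon_\xi)+\alpha_\xi\bigl(\varepsilon^d\#\{i\in R^\varepsilon_\xi\}-\mu^\xi_\varepsilon(R^\varepsilon_\xi)\bigr)$ and using $\mu^\xi_\varepsilon\overset{*}{\rightharpoonup}\theta_\xi\mathcal L^d$ with $\mathcal L^d(R^\varepsilon_\xi\setminus R)+\mathcal L^d(R\setminus R^\varepsilon_\xi)\to0$ will make this converge to $\int_R(\theta_\xi\beta_\xi+(1-\theta_\xi)\alpha_\xi)\,dy$. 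Since $R\subset B_\rho(x_0)$ with $|R|\sim\delta\,|B_\rho(x_0)|$, the Lebesgue-point property of $\theta_\xi$ at $x_0$ gives $\int_R(\theta_\xi\beta_\xi+(1-\theta_\xi)\alpha_\xi)=\bigl(\theta_\xi(x_0)\beta_\xi+(1-\theta_\xi(x_0))\alpha_\xi\bigr)\delta\rho\,w_{d-1}\rho^{d-1}(1+\omega_\delta(\rho))$ with $\omega_\delta(\rho)\to0$ as $\rho\to0$ for fixed $\delta$, so the factor $\delta$ cancels in \eqref{PP-avg}. Choosing for each $\varepsilon$ a minimising index $\ell_\varepsilon$ and extracting an $L^1(B_\rho(x_0))$-convergent subsequence $v^\varepsilon_{\ell_\varepsilon}\to v_*$, the limit $v_*$ still equals $u_{x_0,\nu}$ outside $B_{(1-\delta)\rho}(x_0)$, hence is admissible for $m(x_0,\nu,\rho)$; the $\Gamma$-$\liminf$ inequality for $E_\varepsilon\to E$ together with \eqref{PP-avg} then yield
$$m(x_0,\nu,\rho)\le E(v_*,B_\rho(x_0))\le w_{d-1}\rho^{d-1}\!\sum_{\xi\in V}\!\bigl(\theta_\xi(x_0)\beta_\xi+(1-\theta_\xi(x_0))\alpha_\xi\bigr)|\langle\nu,\xi\rangle|(1+\omega_\delta(\rho))+C\delta\rho^{d-1}.$$
Dividing by $w_{d-1}\rho^{d-1}$, taking $\limsup_{\rho\to0}$ in (\ref{blowup}) and then $\delta\to0$ will prove the right inequality in \eqref{PP-goal} for rational $\nu$; for the remaining $\nu$ it follows from the continuity of the convex function $\varphi(x_0,\cdot)$. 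Together with the lower bound this establishes \eqref{PP-goal}, hence the theorem.

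\textbf{Main obstacle.} I expect the decisive difficulty to be exactly this averaging construction: one must at once (i) pick a single family of lattice translations for which the averaged broken-bond multiplicity in \emph{every} direction $\xi$ is the integer $\langle\xi,p\rangle$ (which is what the choice $\ell\mapsto\varepsilon\ell p'$ with $\langle p',p\rangle=1$ achieves), (ii) keep the jump set inside a slab of width $O(\delta\rho)$ so that the wall realising the boundary datum $u_{x_0,\nu}$ costs only $O(\delta\rho^{d-1})$, and (iii) ensure that slab still sees the local fraction $\theta_\xi(x_0)$, which is why it must occupy a fixed proportion of $B_\rho(x_0)$. These competing demands are what force the iterated limit $\varepsilon\to0$, then $\rho\to0$, then $\delta\to0$, and the reliance on the pointwise blow-up formula rather than on a single cell-problem identity.
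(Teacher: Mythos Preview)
Your proposal is correct and follows the same strategy as the paper: build half-space competitors with the interface shifted inside a slab of width $\sim\delta\rho$, average over the shifts so that the broken-bond count becomes a volume integral of the coefficients, pass to the weak-$*$ limit of the density measures, and conclude via the blow-up formula with the iterated limits $\varepsilon\to0$, $\rho\to0$, $\delta\to0$.

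There are three implementation differences worth noting. First, the paper only tests the finitely many directions $\nu=v/\|v\|$ with $v\in V$, using that the upper bound $\psi(x,\cdot)=\sum_\xi(\theta_\xi\beta_\xi+(1-\theta_\xi)\alpha_\xi)|\langle\cdot,\xi\rangle|$ is the largest convex even $1$-homogeneous function with those values on $V$; you instead test all rational $\nu$ via the B\'ezout vector $p'$ and extend by continuity. Second, the paper averages over \emph{all} lattice points $k$ in the $d$-dimensional slab $(B^v_{\rho,\delta}(x))_\varepsilon$ and uses an approximate count, whereas you average over the one-parameter family $\ell\mapsto\varepsilon\ell p'$ and exploit the exact integer multiplicity $\langle\xi,p\rangle$; both produce the same leading term. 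Third, to pass from the discrete to the continuum minimum the paper invokes the convergence $m_\varepsilon(x,\nu,\rho)\to m(x,\nu,\rho)$ from \cite[Theorem~4.9]{AS}, while you extract an $L^1$-limit $v_*$ of the minimising competitors and use the $\Gamma$-$\liminf$ inequality directly; your route is more self-contained, the paper's is shorter. None of these differences is essential.
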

\begin{theorem}\label{Localization 1} Let $\theta : \Omega \to [0,1]$ be measurable and $\varphi : \Omega \times \mathbb{R}^d \to [0,+\infty) $ be positively $1$-homogeneous in the second variable such that the trivial bounds (\ref{tri}) are satisfied and $\varphi(x,\cdot) \in \textbf{H}_{\alpha,\beta,V}(\theta(x))$ for a.e. $x \in \Omega$. Then there exist $\{c^\varepsilon_{i,\xi}\}_\varepsilon \in \{\alpha_\xi,\beta_\xi\}^{\Omega_\varepsilon} $ such that $E_\varepsilon$ $\Gamma$-converges to $E$ where
\begin{equation*}
E_\varepsilon(u) = \frac{1}{4}\sum_{\xi \in V}\sum_{i,i+\xi \in \Omega_\varepsilon} \varepsilon^{d-1} c^\varepsilon_{i,\xi} (u_{\varepsilon i}-u_{\varepsilon(i+\xi)})^2, \quad E(u) = \int_{\partial^*\{u=1\} \cap \Omega} \varphi(x,\nu_u(x))\mathrm{d}\mathcal{H}^{d-1}
\end{equation*}
and
$ \displaystyle
\theta(\{ c^\varepsilon_{i,\xi} \}) \overset{*}{\rightharpoonup} \theta
$
as $\varepsilon \to 0$.
\end{theorem}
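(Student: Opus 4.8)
The plan is to realize $E_\varphi$ as the $\Gamma$-limit of a \emph{patchwork} of the periodic microgeometries produced in Section~3, glued on a fine cubic mesh by a separation--of--scales argument, and then to remove the mesh and the auxiliary parameters by diagonalization. First I would fix a measurable selection of directional volume fractions: for a.e.\ $x$ the set of tuples $(\theta_\xi)_{\xi\in V}\in[0,1]^{\#V}$ with $\frac1{\#V}\sum_\xi\theta_\xi=\theta(x)$ and $\varphi(x,\nu)\le\sum_\xi(\theta_\xi\beta_\xi+(1-\theta_\xi)\alpha_\xi)|\langle\nu,\xi\rangle|$ for all $\nu$ is nonempty, closed and measurable in $x$, so a measurable selection theorem gives $x\mapsto(\theta_\xi(x))_\xi$ with these properties. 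We may assume $E$ is $L^1$-lower semicontinuous (otherwise replace $\varphi$ by the density given by the blow-up formula (\ref{blowup}) of the $L^1$-lower-semicontinuous envelope of $E$; by \cite{BFM} this gives the same $E$, and it still lies in $\textbf{H}_{\alpha,\beta,V}(\theta(x))$ a.e.\ since it is squeezed between $\sum_\xi\alpha_\xi|\langle\cdot,\xi\rangle|$ and the original upper bound). Using Lusin's theorem together with a monotone approximation and the stability of $\Gamma$-convergence under monotone limits ([\cite{DM}, Prop.~5.4 and 5.7]), exactly as in Steps~1--4 of Proposition~\ref{Gammaclosure}, it then suffices to treat the case in which $\Omega$ is split into finitely many cubes $Q_1,\dots,Q_m$ of side $\lambda$, with $\theta\equiv\theta^k$ and $\varphi(x,\cdot)\equiv\varphi^k$ on $Q_k$, each $\varphi^k$ crystalline with rational vertices containing $V$ and with rational directional fractions $\theta^k_\xi$.

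On the reduced data the building blocks are available: by Proposition~\ref{Gammaclosure} and the argument in the proof of Theorem~\ref{General}, for each $k$ there are coefficients $\{c^{k,\delta}_{i,\xi}\}$, periodic at a macroscopic scale $\delta$, with $\theta_\xi(\{c^{k,\delta}_{i,\xi}\})\to\theta^k_\xi$ as $\delta\to0$ and homogenized density converging to $\varphi^k$. Fixing $\lambda$ and choosing $\varepsilon\ll\delta\ll\lambda$, I would set $c^{\lambda,\varepsilon}_{i,\xi}=c^{k,\delta}_{i,\xi}$ when $\varepsilon i\in Q_k$ outside a buffer layer of width $h\downarrow0$ around $\partial Q_k$, and any admissible value (say the pattern with fraction $\theta^k_\xi$) inside the buffer. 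By \cite{AS} the $\Gamma$-limit of the associated $E_\varepsilon$ exists up to subsequences and has the form $\int_{\partial^*\{u=1\}\cap\cdot}\psi(x,\nu)\,d\mathcal H^{d-1}$ with $\psi$ satisfying (\ref{blowup}); applying Theorem~\ref{Localization 2} to this sequence (whose volume-fraction measures converge weak$^*$ to the piecewise-constant limit of the previous step) gives $\psi(x,\cdot)\in\textbf{H}_{\alpha,\beta,V}(\theta^k)$ for a.e.\ $x\in Q_k$. More importantly, at a point $x_0$ interior to $Q_k$ and away from the buffer, for small $\rho$ the minimum problem (\ref{minimumproblem}) defining $m(x_0,\nu,\rho)$ coincides with the one entering the homogenized density (\ref{phi}) of $\{c^{k,\delta}_{i,\xi}\}$, whence $\psi(x_0,\cdot)=\varphi^k$. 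Thus $\psi=\varphi^\lambda$ off the mesh skeleton $S_\lambda:=\bigcup_k\partial Q_k$ and $\psi\le\sum_\xi\beta_\xi|\langle\cdot,\xi\rangle|$ on $S_\lambda$.

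The $\Gamma$-$\liminf$ inequality (against $E_{\varphi^\lambda}$) follows from superadditivity of the $\Gamma$-$\liminf$ on disjoint open sets (De Giorgi--Letta) combined with the cube-interior identification above: $\Gamma\text{-}\liminf_\varepsilon E_\varepsilon(u)\ge\sum_k E_{\varphi^k}(u,Q_k^\circ)=\int_{(\partial^*\{u=1\})\setminus S_\lambda}\varphi^\lambda(x,\nu)\,d\mathcal H^{d-1}$. For the $\Gamma$-$\limsup$, given $u$ one translates the mesh generically so that $\mathcal H^{d-1}(\partial^*\{u=1\}\cap S_\lambda)=0$ (possible because, along each coordinate hyperplane direction, $\partial^*\{u=1\}$ has positive $\mathcal H^{d-1}$-measure on only countably many translates), builds a recovery sequence cube by cube from Proposition~\ref{Gammaclosure}, and matches the pieces along $S_\lambda$ by the joining (fundamental-estimate) technique of \cite{AS}, the matching cost being controlled by the energy carried by $\partial^*\{u=1\}$ in a shrinking neighbourhood of $S_\lambda$, hence negligible. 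So, for generic meshes, $E_\varepsilon\xrightarrow{\Gamma}E_{\varphi^\lambda}$, while by construction $\theta(\{c^{\lambda,\varepsilon}_{i,\xi}\})\overset{*}{\rightharpoonup}\theta^\lambda$ as $\varepsilon\to0$ (the periodic pieces contribute $\theta^k_\xi$ on $Q_k$, the buffers contribute $O(h)$). Letting $\lambda\to0$ along the approximation of the first step and using the metrizability of $\Gamma$-convergence and of weak$^*$ convergence on bounded sets ([\cite{DM}, Thm~10.22]), a diagonal sequence yields $\{c^\varepsilon_{i,\xi}\}\in\{\alpha_\xi,\beta_\xi\}^{\Omega_\varepsilon}$ with $E_\varepsilon\xrightarrow{\Gamma}E_\varphi$ and $\theta(\{c^\varepsilon_{i,\xi}\})\overset{*}{\rightharpoonup}\theta$.

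The main obstacle is that the interface is $(d-1)$-dimensional, so neither the $x$-dependence of $\varphi$ nor the mesh skeleton can be controlled by Lebesgue-a.e.\ arguments: the piecewise-constant reduction of $\varphi$ must be performed so that it converges $\mathcal H^{d-1}$-a.e.\ along \emph{every} reduced boundary while staying convex in $\nu$ and compatible with the volume-fraction constraint, which is exactly where the blow-up representation of \cite{BFM} and Theorem~\ref{Localization 2} are needed to legitimize both the monotone approximation and the cube-interior identification $\psi=\varphi^\lambda$. In addition, the four scales — mesh $\lambda$, periodicity $\delta$, lattice spacing $\varepsilon$, buffer width $h$ — must be ordered so that the seam energy, the deviation of the volume fraction from $\theta^\lambda$, and the discretization error all vanish in the right order, and this has to be done simultaneously with the weak$^*$ convergence of the volume-fraction measures throughout the diagonalization.
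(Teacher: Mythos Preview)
Your high-level architecture --- reduce to piecewise-constant data on a fine cubic mesh, realize each cube by the periodic theory of Section~3, glue with buffers, then diagonalize --- is exactly the skeleton of the paper's proof. The difference lies in how the two hard steps are carried out, and in both places your proposal has a gap.

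\medskip

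\textbf{Reduction to piecewise-constant $\varphi$.} You invoke ``Lusin's theorem together with a monotone approximation \dots\ exactly as in Steps~1--4 of Proposition~\ref{Gammaclosure}''. Those steps, however, treat a \emph{spatially homogeneous} density; the monotone convergence there is in the variable $\nu$ only and $\Gamma$-convergence follows from pointwise monotone convergence of the integrand. Once $\varphi$ depends on $x$, a piecewise-constant-in-$x$ approximant cannot be simultaneously monotone and convergent at \emph{every} point, and Lusin gives you control only on an $\mathcal L^d$-small bad set --- useless for a $(d-1)$-dimensional integral. The paper does not attempt this; instead it proves a separate Proposition (\ref{Density of Continuos integrands}), built on Proposition~\ref{Propertiesm}, Remark~\ref{equality}, Lemma~\ref{Functionalequality} and Theorem~\ref{convergence}, which characterizes the surface energy through the countable family of minimum problems $m(x_i,\nu_i,\rho_i)$ and constructs the approximating $\varphi_n$ to match those problems one by one. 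This is where most of the work in Section~4 goes, and your proposal does not supply a substitute for it. You recognize this in your closing paragraph, but saying ``this is exactly where \cite{BFM} and Theorem~\ref{Localization 2} are needed'' is not an argument; Theorem~\ref{Localization 2} only tells you the limit density stays in $\textbf{H}_{\alpha,\beta,V}(\theta(x))$, not that it equals the target.

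\medskip

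\textbf{Identification on the mesh skeleton.} In your $\Gamma$-$\limsup$ you write ``given $u$ one translates the mesh generically so that $\mathcal H^{d-1}(\partial^*\{u=1\}\cap S_\lambda)=0$''. But the mesh determines the coefficients $c^{\lambda,\varepsilon}_{i,\xi}$, which are fixed \emph{before} $u$ is chosen; you cannot move it per test function. What you actually need is to show that the $\Gamma$-limit density $\psi$, which you have identified as $\varphi^\lambda$ on the open cubes, equals $\varphi^\lambda$ also on $S_\lambda$. The paper does this (in Step~3) by the lower-semicontinuity argument in the style of (\ref{boundaryineq}): approximate a point $x\in\partial Q_k$ from the interior, use the already-established interior identity and the lower semicontinuity of the $\Gamma$-limit to force $\psi(x,\nu_k)\le\varphi^\lambda(x,\nu_k)$ for the face normal $\nu_k$, which is all that matters since $\nu_u=\pm\nu_k$ $\mathcal H^{d-1}$-a.e.\ on $S_\lambda$. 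That argument is available to you and would replace the mesh-translation step.

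\medskip

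In summary: the patchwork-plus-diagonalization strategy is correct and coincides with the paper's. What is missing is (i) a genuine mechanism for the $x$-approximation of $\varphi$ that respects $\Gamma$-convergence of surface energies (the paper's route is through convergence of the minimum problems $m(x,\nu,\rho)$), and (ii) the lower-semicontinuity identification of the limit density on the cube faces in place of the illegitimate mesh translation.
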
 

Theorem \ref{Localization 2} establishes the fact that at a.e.~$x \in \Omega$ we can reduce to the periodic setting and Theorem \ref{Localization 1} establishes the optimality of this condition, i.e.~every surface energy whose energy density satisfies for a.e.~$x \in \Omega$ that $\varphi(x,\cdot) \in \textbf{H}_{\alpha,\beta,V}(\theta(x))$ for some measurable function $0\leq \theta\leq 1$ can be recovered as the $\Gamma$-limit of some discrete energies of the form (\ref{gamma}), whose local volume fractions $\theta(\{c_{i,\xi}^\varepsilon\})$ of $\beta$-bonds converge (weakly*) to the limiting volume fraction $\theta$. Note that the assumptions of Theorem \ref{Localization 2} are always satisfied, up to a subsequence.

\begin{proof}[Proof of Theorem \ref{Localization 2}:]
By Remark \ref{F,E} and  [\cite{BFM},Remark 3.8] we have that $\varphi(x,\cdot) $ is convex for $a.e. \, x \in \Omega$.  $\varphi(x,\cdot)$ is even and positively $1$-homogeneous. Thus we are done if we show 
\begin{align} \label{Bounds}
\sum_{\xi \in V} \alpha_\xi |\langle \nu, \xi\rangle|\leq \varphi(x,\nu)\leq \sum_{\xi \in V} (\theta_\xi(x) \beta_\xi + (1-\theta_\xi(x))\alpha_\xi )|\langle \nu,\xi\rangle|
\end{align}
for a.e. $x \in \Omega$ and $\theta$ the weak*-limit of $\theta(\{c_{i,\xi}^\varepsilon\})$ satisfying (\ref{Averagefraction}). First of all note that the lower bound in (\ref{Bounds}) is trivial, noting that $c^\varepsilon_{i,\xi} \geq \alpha_\xi$. 
We have that $\theta(\{c_{i,\xi}^\varepsilon\}) \overset{*}{\rightharpoonup} \theta_\xi$, $\theta_\xi \in [0,1]$ and $ \frac{1}{\# V}\sum_{\xi \in V}\theta_\xi = \theta$.

 We prove the estimate for all points in $E$ where 
\begin{align*}
E:= \{ x \in \Omega : \varphi(x,\cdot) \text{ is convex and } x \text{ is a Lebesgue point for } \theta_\xi,\xi \in V\}.
\end{align*}
Since $\varphi(x,\cdot)$ is convex it suffices to prove for all $\nu = \frac{v}{||v||}$, $v \in V$
\begin{align}\label{boundpsi}
\varphi\left(x,\nu\right) \leq \sum_{\xi \in V}(\theta_\xi(x)\beta_\xi + (1-\theta_\xi(x))\alpha_\xi)|\langle \nu,\xi\rangle|=: \psi\left(x, \nu\right),
\end{align}
since
\begin{align*}
\psi(x,\nu)= \sup \{ g : g \text{ even, convex, pos. } 1 \text{-homogenous } g(v) \leq \psi\left(x,v\right) \text{ for all } v \in V \}.  
\end{align*}
We know that for all $x \in E$ and for all $\delta >0$, $\xi,v \in V$ we have
\begin{align*}
\lim_{\rho \to 0} \fint_{B^v_{\rho,\delta}(x)} |\theta_\xi(y)-\theta_\xi(x)|\mathrm{d}y = 0,
\end{align*}
with $B^v_{\rho,\delta}(x) = B_\rho(x)\cap \{ y \in \mathbb{R}^d : |\langle y-x,\frac{v}{||v||}\rangle| \leq \rho \delta\}  $. 

We now prove (\ref{boundpsi}) for $v \in V$.  To this end we construct a suitable competitor in the minimum problem for $m_\varepsilon(x,\frac{v}{||v||},\rho)$. Note that by [\cite{AS},Theorem 4.9] for suitable $\rho \to 0$ we have $m_\varepsilon(x,\frac{v}{||v||},\rho) \to m(x,\frac{v}{||v||},\rho)$ as $\varepsilon \to 0$. Let $u^{k,\rho}_\varepsilon \in BV(B_\rho,\{\pm 1\}), k \in (B^v_{\rho,\delta}(x))_\varepsilon$ be defined by
\begin{align*}
u^{k,\rho}_\varepsilon(z) = \begin{cases} u_{\varepsilon k ,\frac{v}{||v||}}(z) & \mathrm{dist}(z,\partial B_\rho(x)) >\rho \delta   \cap \varepsilon\mathbb{Z}^d \\
u_{x,\frac{v}{||v||}}(z) &\text{otherwise for } z\in\varepsilon\mathbb{Z}^d.
\end{cases}
\end{align*}
Estimating the energy yields
\begin{align*}
E_\varepsilon(u^{k,\rho}_\varepsilon,B_\rho(x)) \leq \sum_{\xi \in V}\sum_{i \in \Pi_{\frac{v}{||v||}}^\xi(k) \cap (B_\rho(x))_\varepsilon  }  \varepsilon^{d-1} c^\varepsilon_{i,\xi} +O(\delta \rho^{d-1}),
\end{align*}
where $O(\delta \rho^{d-1})$ is the contribution due to the boundary. Hence there exists $k_0=k^{\rho,\delta}_0 \in (B^v_{\rho,\delta}(x))_\varepsilon$ such that 
\begin{align*}
E_\varepsilon(u^{k_0,\rho}_\varepsilon,B_\rho(x))  \leq \frac{1}{\# B^v_{\rho,\delta}(x))_\varepsilon } \sum_{k \in B^v_{\rho,\delta}(x))_\varepsilon } E_\varepsilon(u^{k,\rho}_\varepsilon,B_\rho(x)).
\end{align*}
Hence
\begin{align*}
m_\varepsilon(x,\frac{v}{||v||},\rho) &\leq E_\varepsilon(u_\varepsilon^{k_0,\rho},B_\rho(x))\\&\leq \frac{1}{\# (B^v_{\rho,\delta}(x))_\varepsilon}\sum_{\xi \in V} \sum_{k \in (B^v_{\rho,\delta}(x))_\varepsilon} \sum_{i \in \Pi_{\frac{v}{||v||}}^\xi(k) \cap (B_\rho)_\varepsilon} \varepsilon^{d-1} c_{i,\xi}^\varepsilon + O(\delta \rho^{d-1}) \\& \leq \frac{\varepsilon^d}{|B^v_{\rho,\delta}(x)|}\sum_{\xi \in V} \sum_{i \in (B_\rho)_\varepsilon} \varepsilon^{d-1}c^\varepsilon_{i,\xi} \#\{ k \in (B^v_{\rho,\delta}(x))_\varepsilon  : i \in \Pi_{\frac{v}{||v||}}^\xi(k)\} +O(\delta \rho^{d-1}) \\& \leq
\frac{\varepsilon^d}{|B^v_{\rho,\delta}(x)|}\sum_{\xi \in V}\sum_{i \in (B^v_{\rho,\delta}(x))_\varepsilon} \varepsilon^{d-1}c_{i,\xi}^\varepsilon |\langle \frac{v}{||v||},\xi\rangle| w_{d-1} \frac{\rho^{d-1}}{\varepsilon^{d-1}}  +O(\delta\rho^{d-1})\\&
= \frac{w_{d-1} \rho^{d-1}}{|B^v_{\rho,\delta}(x)|}\sum_{\xi \in V}( \theta^\varepsilon_\xi(B^v_{\rho,\delta}(x))\beta_\xi + (|B^v_{\rho,\delta}(x)|- \theta_\xi^\varepsilon(B^v_{\rho,\delta}(x))\alpha_\xi)|\langle \frac{v}{||v||},\xi\rangle| +O(\delta \rho^{d-1})
\end{align*}
Dividing by $w_{d-1}\rho^{d-1} $, $x \in E$, taking the limit as $\varepsilon \to 0$, $\limsup$ as $\rho \to 0$ and using the weak convergence of measures, together with the fact that $\theta_\xi(\partial B^v_{\rho,\delta}(x))=0$  we obtain that
\begin{align*}
\varphi(x,\frac{v}{||v||}) &\leq \sum_{\xi \in V}\limsup_{\rho \to 0 }\left(\fint_{B^v_{\rho,\delta}(x)}\theta_\xi(y) \mathrm{d}y\beta_\xi +  (1-\fint_{B^v_{\rho,\delta}(x)}\theta_\xi(y) \mathrm{d}y)\alpha_\xi\right)|\langle \frac{v}{||v||},\xi\rangle| + O(\delta)\\&=\sum_{\xi \in V} (\theta_\xi(x) \beta_\xi + (1-\theta_\xi(x))\alpha_\xi )|\langle \frac{v}{||v||},\xi\rangle| + O(\delta)
\end{align*}
The claim follows by letting $\delta \to 0$.
\end{proof}

We need first to establish some properties of $m$ defined in (\ref{minimumproblem}).

\begin{proposition}\label{Propertiesm} The following holds:

 \begin{itemize}
 \item[i)] For all $x \in \Omega , \nu \in S^{d-1}$ we have that $ \rho \mapsto m(x,\nu, \rho) $ is continuous on $(0,\mathrm{dist}(x,\partial \Omega)) \setminus E(x,\nu)$ where $E(x,\nu) \subset (0,\mathrm{dist}(x,\partial \Omega))$ is countable.
 \item[ii)] For all $x \in \Omega, \nu_1,\nu_2 \in S^{d-1}, \rho \in (0,\mathrm{dist}(x,\partial \Omega))$ there exists a modulus of continuity $w:[0,\infty) \to [0,\infty)$ such that
 \begin{align*}
 |m(x,\nu_1,\rho)-m(x,\nu_2,\rho)| \leq \rho^{d-1}w(|\nu_1-\nu_2|).
 \end{align*}
 \item[iii)] Let $x_0 \in \Omega, \rho_0 \in (0,\mathrm{dist}(x_0,\partial \Omega)), \nu \in S^{d-1}$ and assume that $ \rho \mapsto m(x_0,\nu,\rho)$ is continuous at $\rho_0 $, then $x \mapsto m(x,\nu,\rho_0) $ is continuous at $x_0$.
\end{itemize}
\end{proposition}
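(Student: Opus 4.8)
The plan is to establish the three regularity properties of the minimum-problem function $m(x,\nu,\rho)$ defined in (\ref{minimumproblem}) by exploiting the growth conditions $\frac1C|\nu|\le g(x,\nu)\le C|\nu|$ together with simple cut-and-paste constructions of competitors, mirroring standard arguments in the integral-representation literature.

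For part (i), I would argue that $\rho\mapsto m(x,\nu,\rho)$ is both lower semicontinuous and upper semicontinuous except at countably many $\rho$. Monotonicity-type control comes from the growth bound: if $\rho_1<\rho_2$, a competitor for $B_{\rho_2}(x)$ restricted to $B_{\rho_1}(x)$ gives, up to the energy carried on the annulus $B_{\rho_2}(x)\setminus B_{\rho_1}(x)$ (which is $O(\rho_2^{d-1}-\rho_1^{d-1})$ by the upper growth bound applied to $u_{x,\nu}$ there), a competitor for $B_{\rho_1}(x)$; conversely extending a competitor for $B_{\rho_1}(x)$ by $u_{x,\nu}$ on the annulus gives a competitor for $B_{\rho_2}(x)$. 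This yields $|m(x,\nu,\rho_2)-m(x,\nu,\rho_1)|\le C\,|\rho_2^{d-1}-\rho_1^{d-1}|$ plus an error governed by whether the optimal competitor concentrates energy near $\partial B_{\rho_i}(x)$. Since for fixed competitor the function $\rho\mapsto E(v,B_\rho(x))$ is monotone and has at most countably many jumps, a diagonal/Moreau-Yosida-type argument shows the set $E(x,\nu)$ of bad radii is countable. (Alternatively, one observes $\rho\mapsto m(x,\nu,\rho)/\rho^{d-1}$ is, up to the cut-paste error, monotone, hence continuous off a countable set.)

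For part (ii), I would fix $x$ and $\rho$ and compare $m(x,\nu_1,\rho)$ with $m(x,\nu_2,\rho)$ by taking a near-optimal competitor $v$ for $\nu_1$ and modifying it near the ``seam'' to match the boundary datum $u_{x,\nu_2}$: the symmetric difference $\{u_{x,\nu_1}\neq u_{x,\nu_2}\}\cap B_\rho(x)$ is a thin wedge of $\mathcal H^{d-1}$-measure $O(\rho^{d-1}|\nu_1-\nu_2|)$, and replacing $v$ by $u_{x,\nu_2}$ on a neighbourhood of $\partial B_\rho(x)$ within that wedge changes the energy by at most $C\rho^{d-1}|\nu_1-\nu_2|$ by the upper growth bound. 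Symmetrizing gives the estimate with $w(t)=Ct$; the more general ``modulus of continuity'' statement allows for the $BV$-ellipticity subtleties but the linear bound suffices.

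For part (iii), I would again use cut-and-paste: given $x$ close to $x_0$, translate a near-optimal competitor for $m(x_0,\nu,\rho_0)$ by $x-x_0$; this is a competitor for the ball $B_{\rho_0}(x)$ with datum $u_{x,\nu}$, up to the error caused by the fact that the domain has moved while the integrand $g(\cdot,\cdot)$ has not — but since $g$ only enters through bounds $\frac1C|\nu|\le g\le C|\nu|$ here, the translated energy is unchanged in value (the integrand bounds are $x$-independent), so $m(x,\nu,\rho_0)\le m(x_0,\nu,\rho_0)+o(1)$ as $x\to x_0$, where the $o(1)$ comes only from boundary-layer adjustments of size controlled by the continuity of $\rho\mapsto m(x_0,\nu,\rho)$ at $\rho_0$; the reverse inequality is symmetric. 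I expect the main obstacle to be part (i): carefully isolating the countable exceptional set requires keeping track of the interplay between the monotone-in-$\rho$ cut-paste error and the possibility that optimal competitors load energy onto spheres $\partial B_\rho(x)$, and one must invoke the lower semicontinuity of $E(\cdot,A)$ together with a compactness argument to pass to limits of near-optimal competitors as $\rho'\to\rho$.
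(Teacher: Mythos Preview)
Your approach to (ii) is correct and matches the paper's, which invokes a trace estimate from \cite{BFM} to bound $|m(x,\nu_1,\rho)-m(x,\nu_2,\rho)|$ by $C\int_{\partial B_\rho(x)}|\mathrm{tr}(u_{x,\nu_1}-u_{x,\nu_2})|\,d\mathcal H^{d-1}$.

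For (i), the restriction direction you propose does not work: a near-optimal competitor $v$ for $B_{\rho_2}(x)$ need not equal $u_{x,\nu}$ anywhere near $\partial B_{\rho_1}(x)$, so its restriction is not admissible for $m(x,\nu,\rho_1)$, and the energy it carries on the annulus is in no way bounded by $C(\rho_2^{d-1}-\rho_1^{d-1})$. Only the extension direction is valid, but that alone suffices. The paper observes that
\[
\rho\longmapsto m(x,\nu,\rho)+\int_{\Pi_\nu(x)\cap(B_r(x)\setminus B_\rho(x))}g(y,\nu)\,d\mathcal H^{d-1}
\]
is monotone non-increasing (exactly by the extension argument), hence continuous off a countable set; since the added integral is continuous in $\rho$, so is $m(x,\nu,\cdot)$. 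Your alternative remark about monotonicity is the right instinct --- the ``diagonal/Moreau--Yosida'' patch is neither needed nor clearly defined.

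The genuine gap is in (iii). Translating a competitor by $x-x_0$ does \emph{not} leave the energy unchanged: the integrand $g$ is $x$-dependent, and you have no modulus of continuity for $g(\cdot,\nu)$, only the uniform bounds $\frac1C|\nu|\le g\le C|\nu|$. Thus $E(\tilde v,B_{\rho_0}(x))$ and $E(v,B_{\rho_0}(x_0))$ can differ by an amount of order $\rho_0^{d-1}$ no matter how small $|x-x_0|$ is, and your inequality $m(x,\nu,\rho_0)\le m(x_0,\nu,\rho_0)+o(1)$ does not follow. The paper avoids translation entirely: it keeps a near-optimal competitor $u$ for $m(x_1,\nu,\rho_1)$ in place on $B_{\rho_1}(x_1)$ and extends it by $u_{x_2,\nu}$ to a slightly larger ball $B_{\rho_2}(x_2)\supset B_{\rho_1}(x_1)$. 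The extra energy lives on $\partial^*\{\tilde u=1\}\cap\partial B_{\rho_1}(x_1)$ (of size $O(\rho_1^{d-2}|x_1-x_2|)$) and on $\Pi_\nu(x_2)\cap(B_{\rho_2}(x_2)\setminus \overline B_{\rho_1}(x_1))$, both vanishing as $x_1\to x_2$ and $\rho_1,\rho_2\to\rho_0$. It is precisely here --- to absorb the unavoidable gap between $\rho_1$ and $\rho_2$ --- that the assumed continuity of $\rho\mapsto m(x_0,\nu,\rho)$ at $\rho_0$ is used.
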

 \begin{proof}
(i) Fix $x \in \Omega,\nu \in S^{d-1}$, set $r:= \mathrm{dist}(x,\partial \Omega)$ and define $ m : (0,d) \to [0,+\infty)$ by
 \begin{align*}
 m(\rho) = m(x,\nu,\rho) + \int_{\Pi_\nu(x) \cap\left( B_r(x) \setminus B_\rho(x)\right)} g(y,\nu(y)) \mathrm{d}\mathcal{H}^{d-1}.
 \end{align*}
\underline{Claim:} For $ 0 < \rho_1 < \rho_2 < r $ it holds that
 \begin{align*}
 m(\rho_2) \leq m(\rho_1).
 \end{align*}
 \underline{Proof of the claim:} Let $\varepsilon >0$, $ 0 < \rho_1 < \rho_2 < r $ and let $u \in BV(\Omega;\{\pm 1\}) $ be such that $u = u_{x,\nu }$ in a neighborhood of $\partial B_{\rho_1}(x)$ and there holds
 \begin{align*}
E(u,B_{\rho_1}(x)) \leq  m(x,\nu,\rho) +\varepsilon.
 \end{align*}
 Define $\tilde{u} \in BV(\Omega;\{\pm 1\})$ by
 \begin{align*}
 \tilde{u}(z) =\begin{cases} u(z) &\text{if } z \in B_{\rho_1}(x)\\
 u_{x,\nu}(z) &\text{otherwise.}
 \end{cases}
 \end{align*}
 Note that $\tilde{u} = u_{x,\nu}$ in a neighborhood of $\partial B_{\rho_2}(x)$ hence we obtain
 \begin{align*}
 m(\rho_2) &\leq E(\tilde{u},B_{\rho_2}(x)) +\int_{\Pi_\nu(x) \cap \left(B_r(x) \setminus B_{\rho_2}(x)\right)} g(y,\nu(y)) \mathrm{d}\mathcal{H}^{d-1} \\&\leq E(u,B_{\rho_1}(x)) + E(\tilde{u},B_{\rho_2}(x)\setminus B_{\rho_1}(x)) + \int_{\Pi_\nu(x) \cap \left(B_r(x) \setminus B_{\rho_2}(x)\right)} \varphi(y,\nu(y)) \mathrm{d}\mathcal{H}^{d-1} \\&\leq E(u,B_{\rho_1}(x)) + \int_{\Pi_\nu(x) \cap \left(B_r(x) \setminus B_{\rho_1}(x)\right)} g(y,\nu(y)) \mathrm{d}\mathcal{H}^{d-1}\\&\leq m(x,\nu,\rho_1) + \int_{\Pi_\nu(x) \cap \left(B_r(x) \setminus B_{\rho_1}(x)\right)} g(y,\nu(y)) \mathrm{d}\mathcal{H}^{d-1} +\varepsilon ,
 \end{align*}
 where we used in the last inequality the fact that $\mathcal{H}^{d-1}(\partial^*\{\tilde{u}=1\} \cap \partial B_{\rho_1}) = 0$. The claim follows letting $\varepsilon \to 0$.
 Hence $ \rho \mapsto m(\rho)$ has countably many discontinuity points $E = E(x,\nu)$. Moreover since $\mathcal{H}^{d-1}(\partial^*\{ u_{x,\nu}=1\}\cap \partial B_{\rho}) = 0$ for all $0 < \rho < r$ we have that
 \begin{align*}
  \rho \mapsto \int_{\Pi_\nu(x) \cap\left( B_r(x) \setminus B_{\rho}(x)\right)} g(y,\nu) \mathrm{d}\mathcal{H}^{d-1}
 \end{align*}
  is a continuous function. Hence we obtain
 \begin{align*}
 \rho \mapsto m(x, \nu ,\rho) = m(\rho) - \int_{\Pi_\nu(x) \cap \left( B_r(x) \setminus B_{\rho}(x)\right)} g(y,\nu) \mathrm{d}\mathcal{H}^{d-1} 
 \end{align*}
 is continuous for all but countably many $\rho \in (0,r)$. 
 
 \medskip

(ii) By [\cite{BFM},Lemma 3.1] it holds that
 \begin{align}
 |m(x,\nu_1,\rho)-m(x,\nu_2,\rho)| \leq C \int_{\partial B_\rho(x)}|\mathrm{tr}(u_{x,\nu_1}-u_{x,\nu_2})|\mathrm{d}\mathcal{H}^{d-1},
 \end{align}
 where 
 \begin{align*}
 \int_{\partial B_\rho(x)}|\mathrm{tr}(u_{x,\nu_1}-u_{x,\nu_2})|\mathrm{d}\mathcal{H}^{d-1} \leq C\rho^{d-1} \arccos(\langle\nu_1,\nu_2\rangle)\leq \rho^{d-1} w(|\nu_1-\nu_2|)
 \end{align*}
holds with $w$ a modulus of continuity and the claim follows.

\medskip

 (iii) \underline{Claim:} For all $0 < \rho_1 < \rho_2 <r$ , $x_1, x_2 \in \Omega$ such that $|x_1-x_2| < \min\{ \rho_1, |\rho_1 -\rho_2|\}$, there holds
 \begin{align} \label{Ineq}
 m(x_2,\nu,\rho_2) \leq m(x_1,\nu,\rho_1) + C \rho_1^{d-2}|x_1-x_2| +  C\Bigl(\rho_2^{d-1} -  \Bigl(\sqrt{\rho_1^2-|x_1-x_2|^2}\Bigr)^{d-1}\Bigr)
 \end{align} 
 \underline{Proof of the claim:} Let $\varepsilon >0$, $u \in BV(\Omega, \{\pm 1\})$ be such that $u = u_{x_1,\nu}$ in a neighborhood of $\partial B_{\rho_1}(x_1)$ and
 \begin{align*}
 E(u,B_{\rho_1}(x_1)) \leq m(x_1,\nu,\rho_1) +\varepsilon
 \end{align*}
 Let $\tilde{u} \in BV(\Omega,\{ \pm 1\})$ be defined by
 \begin{align*}
 \tilde{u}(z) = \begin{cases} u(z) &,\text{if } z \in B_{\rho_1}(x_1),\\
u_{x_2,\nu}(z) &,\text{otherwise}.
 \end{cases}
 \end{align*}
 We then have $\tilde{u} = u_{x_2,\nu} $ in a neighborhood of $\partial B_{\rho_2}(x_2)$ and 
 \begin{align*}
 m(x_2,\nu,\rho_2)&\leq E(\tilde{u},B_{\rho_2}(x_2)) \leq  E(\tilde{u},B_{\rho_1}(x_1)) +   E(\tilde{u},B_{\rho_2}(x_2)\setminus B_{\rho_1}(x_1)) \\ &\leq E(u,B_{\rho_1}(x_1)) + C \mathcal{H}^{d-1}(\partial^*\{\tilde{u}=1\} \cap \partial B_{\rho_1}(x_1))  \\& \qquad+C \mathcal{H}^{d-1}(\partial^*\{u_{x_2,\nu}=1\} \cap \left( B_{\rho_2}(x_2) \setminus \overline{B}_{\rho_1}(x_1)\right)) \\& \leq E(u,B_{\rho_1}(x_1)) + C \rho_1^{d-2} |x_1-x_2| + C\Bigl(\rho_2^{d-1} -  \Bigl(\sqrt{\rho_1^2-|x_1-x_2|^2}\Bigr)^{d-1}\Bigr)\\&\leq m(x_1,\nu,\rho_1) +C \rho_1^{d-2} |x_1-x_2| + C\Bigl(\rho_2^{d-1} -  \Bigl(\sqrt{\rho_1^2-|x_1-x_2|^2}\Bigr)^{d-1}\Bigr) +\varepsilon.
 \end{align*}
The claim follows by letting $\varepsilon \to 0$.\\
\underline{Claim:}  Let $x_0 \in \Omega, \rho_0 \in (0,\mathrm{dist(x_0,\partial \Omega)}), \nu \in S^{d-1}$ and assume that $ \rho \mapsto m(x_0,\rho,\nu)$ is continuous at $\rho_0 $, then $x \mapsto m(x,\rho_0,\nu) $ is continuous at $x_0$. \\
\underline{Proof of the claim:} Fix $\varepsilon >0$. First we prove that 
\begin{align*}
m(x,\nu,r_0) \geq m(x_0,\nu,r_0) - \varepsilon \text{ for all } |x-x_0| < \delta=\delta(\varepsilon,r_0)
\end{align*}
To this end let $r_\varepsilon > r_0,0<\delta < \min\{r_0,|r_\varepsilon-r_0|\}$ be such that
\begin{align} \label{2}
 |m(x_0,\nu,r) - m(x_0,\nu,r_0)| < \frac{\varepsilon}{2}, \quad r_\varepsilon^{d-1} - \Bigl(\sqrt{r_0^2-\delta^2}\Bigr)^{d-1} \leq \frac{\varepsilon}{4C},\quad \delta r_\varepsilon^{d-2} \leq \frac{\varepsilon}{4Cr_0}.
\end{align}
We then have by (\ref{Ineq}),(\ref{2})
\begin{align*}
m(x_0,\nu,r_0)& \leq  m(x_0,\nu,r_\varepsilon)+\frac{\varepsilon}{2}  \leq m(x,\nu,r_0) + Cr_0^{d-2}\delta +C\Bigl( r_\varepsilon^{d-1} - \Bigl(\sqrt{r_0^2-\delta^2}\Bigr)^{d-1}\Bigr)\\& \leq m(x,\nu,r_0) + \varepsilon 
\end{align*}
for all  $|x-x_0| <\delta$.
On the other hand by (\ref{Ineq}),(\ref{2}) we have
\begin{align*}
m(x,\nu,r_0) &\leq m(x_0,\nu,r_\varepsilon) + Cr_0^{d-2}\delta +C\Bigl( r_\varepsilon^{d-1} - \Bigl(\sqrt{r_0^2-\delta^2}\Bigr)^{d-1}\Bigr)\\&\leq m(x_0,\nu,r_0) +Cr_0^{d-2}\delta +C\Bigl( r_\varepsilon^{d-1} - \Bigl(\sqrt{r_0^2-\delta^2}\Bigr)^{d-1}\Bigr)+\frac{\varepsilon}{2} \\
&\leq m(x_0,\nu,r_0) +\varepsilon \text{ for all } |x-x_0| <\delta
\end{align*}
which yields the claim.
 \end{proof}
 
 \begin{remark}\label{equality} \rm Note that if there exist $m_1,m_2 : \Omega \times S^{d-1} \times (0,\mathrm{dist}(x,\partial \Omega)) \to [0,\infty)$ satisfying (i)-(iii) of Proposition \ref{Propertiesm} and there exists $\mathcal{D}_1 \times \mathcal{D}_2 \subset \Omega \times S^{d-1}$ countable and dense such that for all $x \in \mathcal{D}_1 $ there exists $\mathcal{D}_3(x) \subset (0,\mathrm{dist}(x,\partial \Omega))$ countable and dense and there holds
 \begin{align}\label{63}
 m_1(x,\nu,\rho)=m_2(x,\nu,\rho)
 \end{align}
 for all $(x,\nu,\rho) \in \mathcal{D}_1 \times \mathcal{D}_2 \times \mathcal{D}_3(x)$, then for all $(x,\nu) \in \Omega\times S^{d-1}$ we have that
 \begin{align*}
 m_1(x,\nu,\rho)=m_2(x,\nu,\rho)
\end{align*}  
for all $\rho \in (0,\mathrm{dist}(x,\partial \Omega)) \setminus E(x)$, where $E(x) $ is countable. 

We prove the claim. By (ii) it suffices to prove the equality (\ref{63})  on $\Omega \times \mathcal{D}_2 \times (0,\mathrm{dist}(x,\partial \Omega))$, with $E(x) $ countable. We set
\begin{align*}
E(x) = \Bigl(\bigcup_{\nu \in \mathcal{D}_2} E(x,\nu) \Bigr) \cup \Bigl(\bigcup_{y \in \mathcal{D}_1,\nu \in \mathcal{D}_2}E(y,\nu)\Bigr)
\end{align*}
 where $E(z,\nu)$ is the countable set of discontinuity points of $m_1(z,\nu,\cdot) $ and $m_2(z,\nu,\cdot) $ given by Proposition \ref{Propertiesm}. If $x \in \mathcal{D}_1,\nu \in S^{d-1}$, then we have that $ m_1(x,\nu,\rho)=m_2(x,\nu,\rho)$ for all $\rho $ in a countable dense set $\mathcal{D}_3$ and both $m_1(x,\nu,\cdot)$ and $m_2(x,\nu,\cdot)$ are continuous on $(0,\mathrm{dist}(x,\partial \Omega)) \setminus E(x) $, therefore for every $\rho \in (0,\mathrm{dist}(x,\partial \Omega)) \setminus E(x)$ we can find $\rho_k \to \rho$, $\{\rho_k\}_k \subset \mathcal{D}_3$ such that
 \begin{align*}
 m_1(x,\nu,\rho)= \lim_{k \to \infty} m_1(x,\nu,\rho_k) = \lim_{k \to \infty} m_2(x,\nu,\rho_k) = m_2(x,\nu,\rho)
 \end{align*}
 Let now $(x_0,\nu,\rho_0) \in \Omega \times S^{d-1} \times ((0,\mathrm{dist}(x_0,\partial \Omega))\setminus E(x_0)$. By the definition of $E(x_0)$ we have that $\rho \mapsto m_1(x_0,\nu,\rho),\rho \mapsto m_2(x_0,\nu,\rho)$ are both continuous at $\rho_0$, by (iii) of Proposition \ref{Propertiesm} there holds that $x \mapsto m_1(x,\nu,\rho_0),\rho \mapsto m_2(x,\nu,\rho_0)$ is continuous at $x_0$. Choose $x_k \to x_0$, $\{x_k\}_k \subset \mathcal{D}_1$, by the definition of $E(x_0)$ we have that
 \begin{align*}
 m_1(x_0,\nu,\rho_0)= \lim_{k \to \infty} m_1(x_k,\nu,\rho_0) = \lim_{k \to \infty} m_2(x_k,\nu,\rho_0) = m_2(x_0,\nu,\rho_0)
 \end{align*}
 and the remark holds true.
 \end{remark}
 The next goal is to prove Theorem \ref{convergence} below, which relates $\Gamma$-convergence with the convergence of the corresponding minimum problems (\ref{minimumproblem}) and we then use it in the proof of Proposition \ref{Density of Continuos integrands}. In order to prove Theorem \ref{convergence} we apply Lemma \ref{Functionalequality}, which shows, that every lower semicontinuous surface energy functional is characterized by its infimum problems on balls.
  \begin{lemma}\label{Functionalequality}
  Let $\varphi_i : \Omega\times \mathbb{R}^d \to [0,+\infty), i = 1,2$ be such that the trivial bounds (\ref{tri}) are satisfied and let $E_i : BV(\Omega,\{\pm 1\}) \times \mathcal{A}(\Omega) \to [0,+\infty) $ be defined by
  \begin{align*}
  E_i(u,A)= \int_{\partial^*\{u=1\}\cap A} \varphi_i(x,\nu_u(x)) \mathrm{d}\mathcal{H}^{d-1} 
  \end{align*}
  for all $(u,A) \in BV(\Omega,\{\pm 1\}) \times \mathcal{A}(\Omega)$.
For all $A \in \mathcal{A}(\Omega)$ let  $E_i(\cdot,A)$ be $L^1(A)$-lower semicontinuous and assume that
  \begin{align} \label{m1=m2}
  m_1(x,\rho,\nu) = m_2(x,\rho,\nu) \text{ for all } x \in \Omega, \nu \in S^{d-1},  \rho \in (0,\mathrm{dist}(x,\partial \Omega)) \setminus E(x) 
  \end{align}
  where $E(x) $ is a countable set, then
  \begin{align*}
  E_1(u,A) = E_2(u,A) 
  \end{align*}
for all $(u,A) \in BV(\Omega, \{\pm 1\} ) \times \mathcal{A}(\Omega)$.
  \end{lemma}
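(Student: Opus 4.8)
The plan is to show that the surface energies $E_1$ and $E_2$ coincide by using the abstract representation theorem of \cite{BFM}: since both $E_i(\cdot,A)$ are $L^1(A)$-lower semicontinuous surface energies of the form $\int_{\partial^*\{u=1\}\cap A}\varphi_i(x,\nu)\,d\mathcal H^{d-1}$ satisfying the trivial bounds, each $\varphi_i$ admits the blow-up representation \eqref{blowup} in terms of its own minimum problem $m_i(x,\nu,\rho)$. Thus it suffices to prove that $\varphi_1(x,\nu)=\varphi_2(x,\nu)$ for all $x\in\Omega$ and $\nu\in S^{d-1}$, and then conclude that $E_1(u,A)=E_2(u,A)$ for every $(u,A)$ by integration. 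The hypothesis \eqref{m1=m2} gives us $m_1(x,\nu,\rho)=m_2(x,\nu,\rho)$ for all $\rho$ outside a countable set $E(x)$.

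The key step is to deduce the pointwise equality of the blow-up limits from the equality of $m_1$ and $m_2$ off a countable set. First I would fix $x\in\Omega$ and $\nu\in S^{d-1}$. By the definition \eqref{blowup},
\[
\varphi_i(x,\nu)=\limsup_{\rho\to0}\frac{m_i(x,\nu,\rho)}{w_{d-1}\rho^{d-1}}.
\]
Since $E(x)$ is countable, we may choose a sequence $\rho_k\to0$ with $\rho_k\notin E(x)$ that realizes the $\limsup$ for $\varphi_1$; along that sequence $m_1(x,\nu,\rho_k)=m_2(x,\nu,\rho_k)$, so
\[
\varphi_1(x,\nu)=\lim_{k\to\infty}\frac{m_1(x,\nu,\rho_k)}{w_{d-1}\rho_k^{d-1}}=\lim_{k\to\infty}\frac{m_2(x,\nu,\rho_k)}{w_{d-1}\rho_k^{d-1}}\le\varphi_2(x,\nu),
\]
and by symmetry $\varphi_2(x,\nu)\le\varphi_1(x,\nu)$. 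Hence $\varphi_1(x,\nu)=\varphi_2(x,\nu)$ for every $(x,\nu)$. Finally, plugging this equality into the representation formula yields $E_1(u,A)=\int_{\partial^*\{u=1\}\cap A}\varphi_1(x,\nu_u)\,d\mathcal H^{d-1}=\int_{\partial^*\{u=1\}\cap A}\varphi_2(x,\nu_u)\,d\mathcal H^{d-1}=E_2(u,A)$ for all $(u,A)\in BV(\Omega,\{\pm1\})\times\mathcal A(\Omega)$.

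The main obstacle is purely a bookkeeping one: we need to be sure that the representation of \cite{BFM} applies with $m_i$ as defined in \eqref{minimumproblem} (in particular that the boundary condition "$v=u_{x,\nu}$ in a neighborhood of $\partial B_\rho(x)$" is the one used there, possibly after invoking Proposition \ref{Propertiesm}(i) to pass between sharp and relaxed boundary data along continuity points of $\rho\mapsto m_i(x,\nu,\rho)$), and that the exceptional set $E(x)$ of \eqref{m1=m2} is compatible with the exceptional sets coming from Proposition \ref{Propertiesm}. Concretely, I would enlarge $E(x)$ to also contain the (countably many) discontinuity points of $\rho\mapsto m_1(x,\nu,\rho)$ and $\rho\mapsto m_2(x,\nu,\rho)$ supplied by Proposition \ref{Propertiesm}(i), so that the sequence $\rho_k$ above may be chosen at common continuity points, which is all that is needed to pass the equality through the limit. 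Once this is arranged the argument is immediate.
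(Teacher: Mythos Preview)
Your approach is correct and takes a genuinely different route from the paper. The paper argues directly at the level of the functionals: it covers $\partial^*\{u=1\}\cap A$ by disjoint balls $B_{\rho_i}(x_i)$ with $\rho_i$ satisfying \eqref{m1=m2}, uses the trace estimate $|m_i(u,B)-m_i(v,B)|\le C\int_{\partial B}|\mathrm{tr}(u-v)|\,d\mathcal H^{d-1}$ from \cite[Lemma~3.1]{BFM} to replace $u$ by the flat profile $u_{x_i,\nu_i}$ on each ball, swaps $m_1$ for $m_2$ via the hypothesis, and then builds a sequence $u^\delta\to u$ in $L^1$ whose $E_2$-energy is controlled by $E_1(u,A)$; lower semicontinuity of $E_2$ and symmetry finish the proof. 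No pointwise comparison of densities is made.

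Your route through the \cite{BFM} blow-up representation $\tilde\varphi_i(x,\nu)=\limsup_{\rho\to0}m_i(x,\nu,\rho)/(w_{d-1}\rho^{d-1})$ is shorter and perfectly legitimate, but there is one step you pass over too quickly. Knowing that $E(x)$ (even enlarged by the discontinuity sets from Proposition~\ref{Propertiesm}(i)) is countable does \emph{not} by itself let you choose $\rho_k\to0$ outside $E(x)$ that realizes the $\limsup$: a function with countably many discontinuities can have its $\limsup$ attained only at those points (e.g.\ $f(1/n)=2$, $f\equiv1$ elsewhere). What actually makes this step work is the structure hidden in the \emph{proof} of Proposition~\ref{Propertiesm}(i): $\rho\mapsto m_i(x,\nu,\rho)$ is a nonincreasing function plus a continuous one, so at every $\rho_0$ one has $m_i(x,\nu,\rho_0)\le m_i(x,\nu,\rho_0^-)$. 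Dividing by $w_{d-1}\rho^{d-1}$ preserves this, hence any value of $f_i(\rho)=m_i(x,\nu,\rho)/(w_{d-1}\rho^{d-1})$ is dominated by values at nearby points to the left, among which the co-countable set $E(x)^c$ is dense. This yields $\limsup_{\rho\to0}f_i(\rho)=\limsup_{\rho\to0,\,\rho\notin E(x)}f_i(\rho)$, and from there your symmetry argument goes through. Once you make this monotonicity observation explicit, the proof is complete.
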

  \begin{proof}
  Let $u \in BV(\Omega,\{\pm 1\})$ and define 
  \begin{equation}   \label{Cover}
  \begin{split}
  \mathcal{Q}^\delta = \Big\{ &B_i^\delta : B_i^\delta = B_{\rho_i}(x_i), B_i^\delta \subset A , x_i \in \partial^*\{u=1\} , \rho_i < \delta, B_i^\delta \cap B_j^\delta=\emptyset, i \neq j \\& (\ref{m1=m2}) \text{ is satisfied}, \int_{\partial B_i^\delta} | \mathrm{tr}(u - u_{x_i,\nu_i} )|\mathrm{d}\mathcal{H}^{d-1} < \rho_i^{d-1} \delta,\\& \mathcal{H}^{d-1}(\partial^*\{u=1\}) \setminus \bigcup^\infty_{i=1}B_i^\delta) = 0, 
  \mathcal{H}^{d-1}(\partial^*\{u=1\}) \cap B_i^\delta) \geq \frac{1}{2}w_{d-1}\rho_i^{d-1}\Big\}.
  \end{split}
\end{equation}
  By Besicovitch Covering Theorem we know that there exists such a countable cover. Let  $m : BV(\Omega;\{\pm 1\}) \times \mathcal{A}(\Omega)\to [0,+\infty)$ be defined by
  \begin{align*}
  m_i(u,A) = \inf\Big\{ E_i(v,A) : v \in BV(\Omega,\{\pm 1\}),\, u = v \text{ on a neighborhood of } A\Big\}
  \end{align*}
  for $i=1,2$. Note that by [\cite{BFM},Lemma 3.1] 
  \begin{align} \label{trace}
  |m_i(u,A) -m_i(v,A)| \leq C\int_{\partial A}| \mathrm{tr}(u - v) |\mathrm{d}\mathcal{H}^{d-1}
  \end{align}
for $i=1,2$  holds. Therefore by (\ref{Cover}),(\ref{trace}) we have
  \begin{align*}
  E_1(u,A) &\geq \sum^{\infty}_{i=1}E_1(u,B^\delta_i) \geq \sum^{\infty}_{i=1} m_1(u,B_i^\delta) \\& \geq \sum^{\infty}_{i=1}m_1(u_{x_i,\nu_i},B^\delta_i) - C \sum^{\infty}_{i=1}\int_{\partial B_i^\delta} | \mathrm{tr}(u - u_{x_i,\nu_i}) |\mathrm{d}\mathcal{H}^{d-1} \\&  \geq \sum^{\infty}_{i=1}m_2(u_{x_i,\nu_i},B^\delta_i) - C \delta \sum^{\infty}_{i=1}\rho_i^{d-1} \\& \geq \sum^{\infty}_{i=1}m_2(u,B^\delta_i) - C \delta \sum^{\infty}_{i=1}\rho_i^{d-1} \\&\geq \sum^{\infty}_{i=1}m_2(u,B^\delta_i) - C \delta \mathcal{H}^{d-1}(\partial^*\{u=1\}) \cap A)
  \end{align*}
  Now choose $u^i_\delta \in BV(B^\delta_i, \{ \pm 1\})$ such that $u^\delta_i = u$ in a Neighborhood of $\partial B^\delta_i$ and 
  \begin{align*}
  E_2(u^i_\delta,B^\delta_i) \leq \frac{1}{2^i} \delta + m_2(u, B^\delta_i).
  \end{align*}
  If we set $N^\delta = \Omega \setminus \bigcup^\infty_{i=1}B_i^\delta $ and define
  \begin{align*}
  u^\delta(x) =\begin{cases} u^i_\delta & x \in B_i^\delta \\
  u(x) & x \in N^\delta.
  \end{cases}
  \end{align*}
  By the coercivity assumption on $\varphi_2$ we have that $u^\delta \in BV(\Omega, \{\pm 1\})$  and 
  \begin{align*}
  E_2(u^\delta,A) &\leq \sum^{\infty}_{i=1}E_2(u^\delta,B^\delta_i) + E_2(u^\delta,N^\delta) \leq \sum^{\infty}_{i=1}m_2(u^\delta_i,B^\delta_i) +\delta \\& \leq E_1(u,A) + C(u) \delta.
  \end{align*}
  We claim that $u^\delta \to u$ in $L^1(A)$. In fact
  \begin{align*}
  ||u^\delta-u||_{L^1(A)} = \sum^\infty_{i=1}  ||u^\delta-u||_{L^1(B^\delta_i)} &\leq C \delta \sum^\infty_{i=1}\mathcal{H}^{d-1}(\partial^*\{u=1\} \cap B^\delta_i) \\&= C\delta \mathcal{H}^{d-1}(\partial^*\{u=1\}) \to 0 \text{ as } \delta \to 0.
  \end{align*}
  Therefore by the lower semicontinuity of $E_2$ we obtain
  \begin{align*}
  E_2(u,A) \leq \liminf_{\delta \to 0} E_2(u^\delta,A) \leq E_1(u,A).
  \end{align*}
  By exchanging the roles of $E_1 $ and $E_2$ we obtain the statement.
  \end{proof}
Set
  \begin{align*}
& m_n(x,\nu,\rho) = \inf \{ E_{\varphi_n}(u, B_\rho(x) ) : u = u_{x,\nu} \text{ in a neighborhood of } \partial B_\rho(x)\} \\&
m(x,\nu,\rho) = \inf \{ E_\varphi(u, B_\rho(x) ) : u = u_{x,\nu} \text{ in a neighborhood of } \partial B_\rho(x)\}
 \end{align*}
  \begin{theorem} \label{convergence} Let $E_{\varphi_n}, E_\varphi : BV(\Omega,\{\pm 1\}) \times \mathcal{A}(\Omega) \to [0,+\infty)$ be defined by 
  \begin{align*}
  E_{\varphi_n}(u,A) = \int_{\partial^*\{u=1\} \cap A} \varphi_n(x,\nu_u(x))\mathcal{H}^{d-1} 
\end{align*}
and
\begin{align*}
E_\varphi(u,A) = \int_{\partial^*\{u=1\} \cap A} \varphi(x,\nu_u(x)) \mathcal{H}^{d-1} 
  \end{align*}
for all $(u,A) \in BV(\Omega,\{\pm 1\}) \times \mathcal{A}(\Omega)$, where $\varphi_n, \varphi : \Omega \times \mathbb{R}^d \to [0,+\infty]$ are such that  $\varphi_n(x,\cdot),\varphi(x,\cdot)$ satisfy the trivial bounds (\ref{tri}) for all $x\in \Omega$.
  Then the following are equivalent:
  \begin{itemize}
  \item[i)] $E_{\varphi_n}(\cdot, A)\, \Gamma$-converges to $E_\varphi(\cdot, A)$ with respect to the $L^1(A)$-topology
  \item[ii)]$m_n(x,\nu,\rho) \to m(x,\nu,\rho)$ for all $(x,\nu,\rho)\in \Omega \times \in S^{d-1}\times (0,\mathrm{dist}(x,\partial \Omega)) \setminus E(x)$ where $E(x) $ is countable.
  \end{itemize}
  \end{theorem}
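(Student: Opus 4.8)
The plan is to establish the two implications separately, using the compactness and blow-up representation for surface energies as a bridge. I would first record the soft ingredient: by the compactness theorem for surface energies (\cite{AS},\cite{BFM}), every subsequence of $(E_{\varphi_n})$ admits a further subsequence along which $E_{\varphi_n}(\cdot,A)$ $\Gamma$-converges in $L^1(A)$, for every $A\in\mathcal{A}(\Omega)$, to a functional $E_\psi(u,A)=\int_{\partial^*\{u=1\}\cap A}\psi(x,\nu_u)\,\mathrm{d}\mathcal{H}^{d-1}$ whose density $\psi$ again satisfies the trivial bounds (\ref{tri}) — the lower bound by the $\Gamma$-liminf inequality against the lower semicontinuous functional with density $\sum_{\xi\in V}\alpha_\xi|\langle\cdot,\xi\rangle|$, the upper bound by testing the $\Gamma$-limsup with the constant sequence $u\equiv u_{x,\nu}$ on flat interfaces together with the blow-up formula (\ref{blowup}) — and for which $E_\psi(\cdot,A)$ is $L^1(A)$-lower semicontinuous. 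Thus the hypotheses of the theorem hold for the pair $(\varphi_n,\psi)$ along any such subsequence, a fact I would use for the implication (ii)$\Rightarrow$(i).

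For the implication (i)$\Rightarrow$(ii) fix $x\in\Omega$. By Proposition \ref{Propertiesm}(ii) the families $\nu\mapsto m(x,\nu,\rho)$ and $\nu\mapsto m_n(x,\nu,\rho)$ are equicontinuous with a modulus independent of $n$ and, up to the factor $\rho^{d-1}$, of $\rho$, so it suffices to prove $m_n(x,\nu,\rho)\to m(x,\nu,\rho)$ for $\nu$ in a countable dense subset $\mathcal{D}$ of $S^{d-1}$ and for $\rho$ outside the countable set $E(x)$ consisting of the discontinuity points of $\rho\mapsto m(x,\nu,\rho)$ and of the $\rho\mapsto m_n(x,\nu,\rho)$, $\nu\in\mathcal{D}$, $n\in\mathbb{N}$, provided by Proposition \ref{Propertiesm}(i). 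For the $\limsup$ I would fix a collar width $\sigma>0$, choose a nearly optimal competitor $u$ for $m(x,\nu,\rho)$ with $u=u_{x,\nu}$ on $B_\rho(x)\setminus B_{\rho-3\sigma}(x)$, take a recovery sequence $u_n\to u$ in $L^1(B_\rho(x))$ with $E_{\varphi_n}(u_n,B_\rho(x))\to E_\varphi(u,B_\rho(x))$, slice the shell $B_{\rho-\sigma}(x)\setminus B_{\rho-2\sigma}(x)$ into thin annuli to select radii $t_n\in(\rho-2\sigma,\rho-\sigma)$ with $\int_{\partial B_{t_n}(x)}|\mathrm{tr}(u_n)-u_{x,\nu}|\,\mathrm{d}\mathcal{H}^{d-1}\to0$ — hence, all functions being $\{\pm1\}$-valued, $\mathcal{H}^{d-1}(\{\mathrm{tr}(u_n)\neq u_{x,\nu}\}\cap\partial B_{t_n}(x))\to0$ — and glue $\tilde u_n:=u_n$ on $B_{t_n}(x)$, $\tilde u_n:=u_{x,\nu}$ outside. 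Then $\tilde u_n$ is admissible for $m_n(x,\nu,\rho)$ and $E_{\varphi_n}(\tilde u_n,B_\rho(x))\le E_{\varphi_n}(u_n,B_\rho(x))+o(1)+C\,w_{d-1}(\rho^{d-1}-(\rho-2\sigma)^{d-1})$, so letting $n\to\infty$ and then $\sigma\to0$ gives $\limsup_n m_n(x,\nu,\rho)\le m(x,\nu,\rho)$.

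For the $\liminf$ I would take almost-minimizers $u_n$ of $m_n(x,\nu,\rho)$. The trivial bounds give $\sup_n|Du_n|(B_\rho(x))<\infty$, and since $u_n=u_{x,\nu}$ on a neighborhood of $\partial B_\rho(x)$ the extension of $u_n$ by $u_{x,\nu}$ lies in $BV_{loc}(\Omega;\{\pm1\})$ with no jump across $\partial B_\rho(x)$; extracting an $L^1_{loc}$-limit $u$ one has $u=u_{x,\nu}$ outside $B_\rho(x)$, hence $u$ is admissible for $m(x,\nu,\rho')$ for every $\rho'\in(\rho,\mathrm{dist}(x,\partial\Omega))$. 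Applying the $\Gamma$-liminf inequality on $B_{\rho'}(x)$ to this crease-free extended sequence — whose energy on $B_{\rho'}(x)$ equals $E_{\varphi_n}(u_n,B_\rho(x))+\int_{\Pi_\nu(x)\cap(B_{\rho'}(x)\setminus\overline{B_\rho(x)})}\varphi_n(y,\nu)\,\mathrm{d}\mathcal{H}^{d-1}(y)$ — yields $m(x,\nu,\rho')\le E_\varphi(u,B_{\rho'}(x))\le\liminf_n m_n(x,\nu,\rho)+C\,w_{d-1}((\rho')^{d-1}-\rho^{d-1})$; letting $\rho'\downarrow\rho$ and using continuity of $m(x,\nu,\cdot)$ at $\rho$ (which is where $\rho\notin E(x)$ is used) gives $m(x,\nu,\rho)\le\liminf_n m_n(x,\nu,\rho)$.

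For (ii)$\Rightarrow$(i) I would invoke the Urysohn property of $\Gamma$-convergence, metrizable in this setting (\cite{DM}): it is enough to check that any subsequence along which $E_{\varphi_n}(\cdot,A)$ $\Gamma$-converges to some $E_\psi(\cdot,A)$ (as in the first paragraph) has $E_\psi=E_\varphi$. For such a subsequence, the implication (i)$\Rightarrow$(ii) applied to the pair $(\varphi_n,\psi)$ gives $m_n(x,\nu,\rho)\to m^\psi(x,\nu,\rho)$ off a countable set of radii; comparison with the hypothesis $m_n(x,\nu,\rho)\to m(x,\nu,\rho)$ off a countable set forces $m^\psi(x,\nu,\rho)=m(x,\nu,\rho)$ for all $x$, $\nu$, and all $\rho$ outside a countable set, whence $E_\psi(u,A)=E_\varphi(u,A)$ for all $(u,A)$ by Lemma \ref{Functionalequality}. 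The step I expect to be the main obstacle is the $\liminf$ half of (i)$\Rightarrow$(ii): an almost-minimizer for $m_n(x,\nu,\rho)$ realizes the boundary datum only on a collar whose width may shrink to zero with $n$, so the energy the limit may concentrate on $\partial B_\rho(x)$ cannot be discarded directly — extending first (so the extension has no jump across $\partial B_\rho(x)$), passing to the slightly larger ball $B_{\rho'}(x)$, and using the continuity of $\rho\mapsto m(x,\nu,\rho)$ is exactly the device that bypasses this, and is the source of the countable exceptional set $E(x)$.
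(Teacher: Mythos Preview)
Your proposal is correct and follows essentially the same route as the paper's proof: the $\limsup$ via recovery sequences plus a cut-off/gluing in a collar, the $\liminf$ by extending almost-minimizers by $u_{x,\nu}$ to a slightly larger ball $B_{\rho'}(x)$ and then letting $\rho'\downarrow\rho$ using the continuity of $m(x,\nu,\cdot)$ at $\rho\notin E(x)$, and the reverse implication by compactness, (i)$\Rightarrow$(ii), Lemma~\ref{Functionalequality}, and Urysohn. Your preliminary reduction to a countable dense set of directions via Proposition~\ref{Propertiesm}(ii) and your more explicit annular slicing for the cut-off are minor elaborations on what the paper leaves implicit, but the architecture is the same.
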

  \begin{proof}
 We first show that (i) implies (ii).
 
 \underline{Step 1:} We show that
  \begin{align*}
  \limsup_{n \to \infty} m_n(x,\nu,\rho) \leq m(x,\nu,\rho) \text{ for all } (x,\nu,\rho) \in \Omega \times S^{d-1} \times (0,\mathrm{dist}(x,\partial \Omega))
  \end{align*}
  To this end let $\varepsilon >0$ and $u \in BV(\Omega,\{\pm 1\}) $ be such that $u= u_{x,\nu} $ in a neighborhood of $\partial B_\rho(x)$ and 
  \begin{align*}
  E_\varphi(u,B_\rho(x)) \leq m(x,\nu,\rho) + \varepsilon
  \end{align*}
   Since $E_n \overset{\Gamma}{\to} E$ there exists  $u_n \in BV(\Omega,\{\pm 1\})$ such that
  \begin{align*}
  \limsup_{n \to \infty} E_{\varphi_n}(u_n,B_\rho(x)) \leq E_\varphi(u,B_\rho(x)).
  \end{align*}
  By a cut-off argument we construct $\tilde{u}_n \in BV(\Omega,\{ \pm 1\})$ such that $\tilde{u}_n = u_{x,\nu}$ in a neighborhood of $\partial B_\rho(x)$ and 
  \begin{align*}
  \limsup_{n \to \infty} E_{\varphi_n}(\tilde{u}_n,B_\rho(x)) \leq \limsup_{n \to \infty} E_{\varphi_n}(u_n,B_\rho(x)).
  \end{align*}
  Hence we have
  \begin{align*}
 \limsup_{n \to \infty} m_n(x,\nu,\rho)&\leq  \limsup_{n \to \infty} E_{\varphi_n}(\tilde{u}_n,B_\rho(x))\leq \limsup_{n \to \infty} E_{\varphi_n}(u_n,B_\rho(x)) \\&\leq E_\varphi(u,B_\rho(x))\leq m(x,\nu,\rho) + \varepsilon.
  \end{align*}
  The claim follows as $\varepsilon \to 0$. By proposition (\ref{Propertiesm}) (ii) we have that that 
  \begin{align}\label{limsup}
  \limsup_{\rho' \to \rho} m(x,\nu,\rho') = m(x,\nu,\rho)
  \end{align}
  for all but countably many $\rho \in (0,\mathrm{dist}(x,\partial \Omega))$, where $\rho'$ is converging decreasingly to $\rho$. 
  
  \underline{Step 2:} We show that 
  \begin{align*}
  \liminf_{n \to \infty} m_n(x,\nu,\rho) \geq \limsup_{\rho' \to \rho} m(x,\nu,\rho')
  \end{align*}
  with $\rho'$ converging decreasingly to $\rho$.
  To prove this choose for all $n \in \mathbb{N}$, $u_n \in BV(\Omega,\{\pm 1\})$ such that $u_n = u_{x,\nu}$ in a neighborhood of $\partial B_\rho(x)$ and 
  \begin{align*}
  E_{\varphi_n}(u_n,B_\rho(x)) \leq m_n(x,\nu,\rho) + \frac{1}{n}.
  \end{align*}
  Let $\rho' > \rho $ and define
  \begin{align*}
  \tilde{u}_n(z) = \begin{cases} u_n(z) & z \in B_\rho(x) \\
  u_{x,\nu}(z) & \text{otherwise.}
  \end{cases}
  \end{align*}
  We have
  \begin{align*}
  m_n(x,\nu,\rho)\geq E_{\varphi_n}(u_n,B_\rho(x)) \geq E_{\varphi_n}(\tilde{u},B_{\rho'}(x)) -\frac{1}{n}-C|\rho-\rho'|.
  \end{align*}
  By the coercivity assumption we know that up to subsequences $\tilde{u}_n \to v$ and therefore  that $v= u_{x,\nu} $ in a neighborhood of $\partial B_{\rho'}(x)$ we obtain
  \begin{align*}
  \liminf_{n \to \infty} m_n(x,\nu,\rho) \geq \liminf_{ n \to \infty} E_{\varphi_n}(u_n,B_\rho(x)) \geq E_{\varphi}(v, B_{\rho'}(x)) - C|\rho-\rho'|
  \end{align*}
  By (\ref{limsup}) the claim follows for all such $\rho \in (0,1) \setminus E(x)$ as $\rho'$ converges decreasingly to $\rho$. 
  
  \medskip
  
  Now we prove that (ii) implies (i): Take a subsequence (not relabeled) $\{E_{\varphi_n}\}_n$. By [\cite{AB},Theorem 3.2] up to subsequences $E_{\varphi_n}(\cdot,A) \, \Gamma$-converges to some $\tilde{E} : BV(\Omega,\{\pm 1\}) \times \mathcal{A}(\Omega) \to [0,+\infty)$ of the form 
  \begin{align*}
  \tilde{E}(u,A) = \int_{\partial^*\{u=1\} \cap A} \tilde{\varphi}(x,\nu_u(x)) \mathrm{d}\mathcal{H}^{d-1} \text{ for all } (u,A) \in BV(\Omega,\{\pm 1\}) \times A(\Omega)
\end{align*}   
and therefore if we denote by $\tilde{m}$ the associated minimum problems of the energy $\tilde{E}$ by (i) implies (ii) and by our assumption we have that
\begin{align*}
\tilde{m}(x,\rho,\nu) =\lim_{n \to \infty} m_n(x,\rho,\nu) = m(x,\rho,\nu) 
\end{align*}
 for all  $x,\nu,\rho \in \Omega \times S^{d-1} \times  (0,\mathrm{dist}(x,\partial \Omega)) \setminus E(x)$
where $E(x)$ is a countable set. By Lemma \ref{Functionalequality} we have that $\tilde{E} = E_\varphi$. Therefore every subsequence contains a further subsequences which $\Gamma$-converges to $E_\varphi$. By the Urysohn-property of $\Gamma$-convergence we have that 
$E_{\varphi_n}$ $\Gamma$-converges to $E_\varphi$.
\end{proof} 
Since the energies that are involved are equi-coercive we may use a metrizability argument for $\Gamma$-convergence [\cite{DM},Theorem 10.22]. We therefore may argue by a diagonalization procedure. To this end we need to define a {\em minimal volume fraction} $\theta_\varphi$  and establish some properties of it. Those are contained in the next lemma.

 For $\varphi :  \mathbb{R}^d \to [0,+\infty)$, even, convex, positively $1$-homogeneous, satisfying (\ref{tri}), or equivalently $\varphi \in \textbf{H}_{\alpha,\beta,V}(1)$, we define $\theta_\varphi \in [0,1]$ by
  \begin{align} \label{minthetavarphi}
  \theta_\varphi = \min \{ s \in [0,1] : \varphi \in \textbf{H}_{\alpha,\beta,V}(s)\} 
  \end{align}
  and we denote
  \begin{align*}
  B_r(\varphi) = \{ \varphi' : \text{ even, convex, positively $1$-homogenous and } \mathrm{d}(\varphi,\varphi') < r \} ,
\end{align*}
where
\begin{align*}  
  \mathrm{d}(\varphi,\varphi') = \sup_{\nu \in S^{d-1}} |\varphi(\nu) - \varphi'(\nu)|.
  \end{align*}
  Note that the minimum in (\ref{minthetavarphi}) is attained by the definition of $\textbf{H}_{\alpha,\beta,V}(\theta)$.
  If $\varphi : \Omega \times \mathbb{R}^d \to [0,+\infty)$ is such that $\varphi(x,\cdot) \in \textbf{H}_{\alpha,\beta,V}(1)$ for all $x \in \Omega$,  we define $\theta_\varphi : \Omega \to [0,1]$ by 
  \begin{align*}
  \theta_\varphi(x) =  \theta_{\varphi(x,\cdot)}.
  \end{align*}

  \begin{lemma} \label{properties of theta}  The following properties hold true

\begin{itemize}  

 \item[i)] Let $0 \leq \theta_1 < \theta_2 \leq 1 $ and $\varphi \in \textbf{H}_{\alpha,\beta,V}(\theta_1)$ then there exists $r>0$ such that $B_r(\varphi) \cap \textbf{H}_{\alpha,\beta,V}(1) \subset \textbf{H}_{\alpha,\beta,V}(\theta_2)$.
 \item[ii)] Let $\varphi,\varphi_n \in \textbf{H}_{\alpha,\beta,V}(1)$ and $\mathrm{d}(\varphi_n,\varphi) \to 0$ as $n \to \infty$, then  $\theta_{\varphi_n} \to \theta_\varphi$.
 \item[iii)] Let  $\varphi(\cdot,\nu)$ be continuous for all $\nu \in S^{d-1}$, then $\theta_\varphi$ is continuous.
 \item[iv)] For every surface energy density $\varphi$  the function $\theta_\varphi$ is measurable.
\end{itemize}
  \end{lemma}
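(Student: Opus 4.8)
The plan is to recast everything through the (semi-infinite) linear program hidden in Theorem~\ref{BoundsTheorem}. For $\psi:S^{d-1}\to[0,+\infty)$ and $\vartheta=(\vartheta_\xi)_{\xi\in V}\in[0,1]^V$ write $g_\vartheta(\nu):=\sum_{\xi\in V}(\vartheta_\xi\beta_\xi+(1-\vartheta_\xi)\alpha_\xi)|\langle\nu,\xi\rangle|$, and set
\[
\Theta(\psi):=\Big\{\vartheta\in[0,1]^V:\ \psi(\nu)\le g_\vartheta(\nu)\ \text{ for all }\nu\in S^{d-1}\Big\},\qquad \widehat\theta(\psi):=\inf\Big\{\tfrac1{\# V}\textstyle\sum_{\xi\in V}\vartheta_\xi:\vartheta\in\Theta(\psi)\Big\}.
\]
With $\Phi:=g_{(1,\dots,1)}$ the trivial upper bound, $\Theta(\psi)\neq\emptyset$ precisely when $\psi\le\Phi$ on $S^{d-1}$; in that case the infimum is attained (compactness of $[0,1]^V$ and closedness of the constraint), $\Theta(\psi)$ is convex and upward closed (if $\vartheta\in\Theta(\psi)$ and $\vartheta\le\vartheta'\in[0,1]^V$ then $g_{\vartheta'}\ge g_\vartheta\ge\psi$), and $\psi_1\le\psi_2$ implies $\Theta(\psi_2)\subset\Theta(\psi_1)$, hence $\widehat\theta(\psi_1)\le\widehat\theta(\psi_2)$. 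By Theorem~\ref{BoundsTheorem} and the remark following it, for $\varphi\in\textbf{H}_{\alpha,\beta,V}(1)$ the set of averages $\{\tfrac1{\# V}\sum_\xi\vartheta_\xi:\vartheta\in\Theta(\varphi|_{S^{d-1}})\}$ is the interval $[\widehat\theta(\varphi|_{S^{d-1}}),1]$, so $\theta_\varphi=\widehat\theta(\varphi|_{S^{d-1}})$ and $\varphi\in\textbf{H}_{\alpha,\beta,V}(s)$ iff $\theta_\varphi\le s\le1$; in particular $\varphi_1\le\varphi_2$ on $S^{d-1}$ gives $\theta_{\varphi_1}\le\theta_{\varphi_2}$. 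The crucial technical step is a \emph{continuity from above}: if $\psi_k\downarrow\psi$ pointwise on $S^{d-1}$ with all $\psi_k\le\Phi$, then $\widehat\theta(\psi_k)\to\widehat\theta(\psi)$. Indeed $\widehat\theta(\psi_k)$ is nonincreasing and bounded below by $\widehat\theta(\psi)$, so it converges to some $L\ge\widehat\theta(\psi)$; choosing minimizers $\vartheta^k\in\Theta(\psi_k)$ and a subsequence $\vartheta^k\to\vartheta^\ast\in[0,1]^V$ with $\tfrac1{\# V}\sum_\xi\vartheta^\ast_\xi=L$, we pass to the limit in $\psi(\nu)\le\psi_k(\nu)\le g_{\vartheta^k}(\nu)$ to get $\psi\le g_{\vartheta^\ast}$ on $S^{d-1}$, so $\vartheta^\ast\in\Theta(\psi)$ and $L=\widehat\theta(\psi)$. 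The same compactness argument gives, for $\varphi_n\in\textbf{H}_{\alpha,\beta,V}(1)$ with $\mathrm{d}(\varphi_n,\varphi)\to0$, the inequality $\theta_\varphi\le\liminf_n\theta_{\varphi_n}$ (extract along the $\liminf$, pass to the limit in $\varphi_n(\nu)\le g_{\vartheta^n}(\nu)$ using $\varphi_n(\nu)\to\varphi(\nu)$).

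\emph{Proof of (i).} Since $\varphi\in\textbf{H}_{\alpha,\beta,V}(\theta_1)$ we have $\theta_\varphi\le\theta_1<\theta_2$. For $r>0$ put $\widehat\varphi_r:=\min\{\varphi|_{S^{d-1}}+r,\ \Phi|_{S^{d-1}}\}$; then $\varphi\le\widehat\varphi_r\le\Phi$ on $S^{d-1}$ and $\widehat\varphi_r\downarrow\varphi|_{S^{d-1}}$ as $r\downarrow0$, so by the continuity from above $\widehat\theta(\widehat\varphi_r)\to\theta_\varphi<\theta_2$; fix $r>0$ with $\widehat\theta(\widehat\varphi_r)<\theta_2$. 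If $\varphi'\in B_r(\varphi)\cap\textbf{H}_{\alpha,\beta,V}(1)$, then for $\nu\in S^{d-1}$ both $\varphi'(\nu)\le\varphi(\nu)+\mathrm{d}(\varphi',\varphi)<\varphi(\nu)+r$ and $\varphi'(\nu)\le\Phi(\nu)$ hold, hence $\varphi'(\nu)\le\widehat\varphi_r(\nu)$; by monotonicity of $\widehat\theta$, $\theta_{\varphi'}=\widehat\theta(\varphi'|_{S^{d-1}})\le\widehat\theta(\widehat\varphi_r)<\theta_2\le1$, so $\varphi'\in\textbf{H}_{\alpha,\beta,V}(\theta_2)$. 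Thus $B_r(\varphi)\cap\textbf{H}_{\alpha,\beta,V}(1)\subset\textbf{H}_{\alpha,\beta,V}(\theta_2)$.

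\emph{Proof of (ii)--(iv).} For (ii) the bound $\liminf_n\theta_{\varphi_n}\ge\theta_\varphi$ was noted above; if $\theta_\varphi<1$, then for any $\theta_2\in(\theta_\varphi,1]$ part (i) with $\theta_1:=\theta_\varphi$ provides $r>0$ with $\varphi_n\in B_r(\varphi)\cap\textbf{H}_{\alpha,\beta,V}(1)\subset\textbf{H}_{\alpha,\beta,V}(\theta_2)$ for $n$ large, i.e.\ $\theta_{\varphi_n}\le\theta_2$ eventually, and letting $\theta_2\downarrow\theta_\varphi$ yields $\limsup_n\theta_{\varphi_n}\le\theta_\varphi$; the case $\theta_\varphi=1$ is trivial since $\theta_{\varphi_n}\le1$. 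For (iii), if $x_n\to x$ then $\varphi(x_n,\nu)\to\varphi(x,\nu)$ for every $\nu$; since each $\varphi(x_n,\cdot)$ is convex with $0\le\varphi(x_n,\nu)\le C|\nu|$ (trivial bounds), the family $\{\varphi(x_n,\cdot)\}_n$ is equi-Lipschitz on $S^{d-1}$, so the pointwise convergence is uniform, i.e.\ $\mathrm{d}(\varphi(x_n,\cdot),\varphi(x,\cdot))\to0$, and (ii) gives $\theta_\varphi(x_n)\to\theta_\varphi(x)$. For (iv), fix $t\in\mathbb{R}$ and a countable dense $D\subset S^{d-1}$; using that $\nu\mapsto\varphi(x,\nu)$ and $\nu\mapsto g_q(\nu)$ are continuous and that $\Theta(\varphi(x,\cdot))$ is upward closed (so a real admissible $\vartheta$ of average $<t$ may be replaced by a rational $q\ge\vartheta$ of average still $<t$), one checks
\[
\{x\in\Omega:\theta_\varphi(x)<t\}=\bigcup_{\substack{q\in\mathbb{Q}^V\cap[0,1]^V\\ \frac1{\# V}\sum_{\xi}q_\xi<t}}\ \bigcap_{\nu\in D}\big\{x\in\Omega:\varphi(x,\nu)\le g_q(\nu)\big\};
\]
since $\varphi(\cdot,\nu)$ is measurable for each fixed $\nu$ (as is the case for $\varphi$ defined by~(\ref{blowup}), by Proposition~\ref{Propertiesm}), the right-hand side is a countable union of countable intersections of measurable sets, so $\theta_\varphi$ is measurable.

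The main obstacle is part (i). The naive approach — enlarging a family of admissible volume fractions for $\varphi$ to one with slightly larger average whose bound $g_{(\theta'_\xi)}$ dominates every nearby $\varphi'$ — breaks down in the directions $\nu$ where $g_{(\theta_\xi)}(\nu)$ is already very close to $\Phi(\nu)$, because there one cannot manufacture enough extra slack to absorb the perturbation $\mathrm{d}(\varphi',\varphi)$. The device that circumvents this is to avoid explicit competitors: one dominates $\varphi'$ by the majorant $\widehat\varphi_r=\min\{\varphi+r,\Phi\}$, which remains admissible exactly because it never exceeds $\Phi$, and then invokes the compactness-based continuity from above of $\widehat\theta$, which is precisely what degenerates gracefully in the critical directions.
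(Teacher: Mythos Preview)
Your proof is correct and takes a genuinely different route from the paper's. For (i) the paper does exactly what you call the ``naive approach'': it picks $(\theta_\xi)$ with average $\theta_1$ bounding $\varphi$ from above, then $(\tilde\theta_\xi)$ with average $\theta_2$ and $\tilde\theta_\xi>\theta_\xi$ for every $\xi$, and sets $r=\min_\xi\tfrac12(\tilde\theta_\xi-\theta_\xi)(\beta_\xi-\alpha_\xi)$, so that $\varphi'(\nu)\le\varphi(\nu)+r\le\sum_\xi(\tilde\theta_\xi\beta_\xi+(1-\tilde\theta_\xi)\alpha_\xi)|\langle\nu,\xi\rangle|$. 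Your diagnosis of this approach's failure mode is on point: when some $\theta_\xi$ is forced to equal $1$ (for instance $V=\{e_1,\dots,e_d\}$ and $\varphi(\nu)=\beta_{e_1}|\nu_1|+\sum_{j\ge2}\alpha_{e_j}|\nu_j|$, where every admissible family must have $\theta_{e_1}=1$) no strictly larger $\tilde\theta_\xi\le1$ exists and the paper's $r$ degenerates to $0$. Your compactness device via the majorant $\widehat\varphi_r=\min\{\varphi+r,\Phi\}$ and continuity from above of $\widehat\theta$ handles this boundary case cleanly; the price is that you lose the explicit formula for $r$. Parts (ii)--(iii) run in parallel with the paper (the paper is terser but the structure is identical: lower semicontinuity of $\theta_\varphi$ via compactness, upper semicontinuity via (i)). For (iv) the paper mollifies $\varphi$ in the $x$ variable, invokes (iii) to make $\theta_{\varphi_n}$ continuous, and then (ii) to get $\theta_{\varphi_n}\to\theta_\varphi$ a.e.; your explicit sublevel-set description via rational $\vartheta$'s and a countable dense set of directions is more elementary and avoids the detour through (iii).
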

  \begin{proof}

  (i) Let $0 \leq \theta_1 < \theta_2 \leq 1$ and $\varphi \in \textbf{H}_{\alpha,\beta,V}(\theta_1)$. Then for all $\varphi' \in B_r(\varphi) \cap \textbf{H}_{\alpha,\beta,V}(1)$ we have
  \begin{align*}
  \varphi'(\nu) \leq \varphi(\nu) + r \leq \sum_{\xi \in V}c_\xi |\langle \nu,\xi\rangle| +  r \leq \sum_{\xi \in V}(c_\xi+r) |\langle \nu,\xi\rangle|=: \sum_{\xi \in V}\tilde{c}_\xi |\langle \nu,\xi\rangle|,
  \end{align*}
  where $ \alpha_\xi \leq c_\xi \leq (\theta_\xi\beta_\xi+(1-\theta_\xi)\alpha_\xi) $ for some  $\{\theta_\xi\}_{\xi \in V} \subset [0,1]$ satisfying (\ref{Averagefraction}) with $\theta_1$ and
  \begin{align*}  
  \tilde{c}_\xi &\leq (\theta_\xi\beta_\xi+(1-\theta_\xi)\alpha_\xi)+r= \Bigl(\Bigl(\theta_\xi+\frac{r}{\beta_\xi-\alpha_\xi}\Bigr)\beta_\xi+\Bigl(1-(\theta_\xi+\frac{r}{\beta_\xi-\alpha_\xi})\Bigr)\alpha_\xi\Bigr) \\&\leq (\tilde{\theta}_\xi\beta_\xi+(1-\tilde{\theta}_\xi)\alpha_\xi), 
  \end{align*}
if $\tilde{\theta}_\xi -\theta_\xi \geq \frac{r}{\beta_\xi-\alpha_\xi}$ and $\{\tilde{\theta}_\xi\}_\xi \subset [0,1]$ satisfy (\ref{Averagefraction}) with $\theta_2$. Set $$ r :=\min_{\xi \in V} \frac{(\tilde{\theta}_\xi-\theta_\xi)(\beta_\xi-\alpha_\xi)}{2}.$$ We have that $B_r(\varphi) \cap \textbf{H}_{\alpha,\beta,V}(1) \subset \textbf{H}_{\alpha,\beta,V}(\theta_2)$. 

\medskip

  (ii) Let $\mathrm{d}(\varphi_n,\varphi) \to 0$ as $n \to \infty$. Up to subsequences we have that $\theta_{\varphi_n} \to \tilde{\theta}$. By the definition of $\theta_\varphi$ we have that $\theta_\varphi \leq \tilde{\theta}$, since $\varphi \in \textbf{H}_{\alpha,\beta,V}(\tilde{\theta})$. Assume that $\theta_\varphi < \theta < \tilde{\theta}$ for some $\theta \in (0,1)$. By (i) there exists a neighborhood $B_r(\varphi)$ of $\varphi $ such that $B_r(\varphi) \cap \textbf{H}_{\alpha,\beta,V}(1) \subset \textbf{H}_{\alpha,\beta,V}(\theta)$. Therefore for $n$ large enough we have that $\varphi_n \in \textbf{H}_{\alpha,\beta,V}(\theta)$ so that $ \theta_{\varphi_n} \leq \theta$, which contradicts $\theta_{\varphi_n} \to \tilde{\theta}$. 

\medskip  
  
  (iii) is a direct consequences of (ii). As for (iv) it suffices to notice that if we define $\varphi_n = \rho_n * \varphi$, where $\rho_n $ is a sequence of convolution kernels, we have that $\varphi_n$ is a sequence of continuous functions and $\varphi_n $ converges a.e. to $\varphi$. In view of (ii),(iii)  $\theta_\varphi$ is a a.e. limit of a sequence of continuous functions, hence it is measurable.
  \end{proof}

 \begin{proposition} \label{Density of Continuos integrands} Let $\theta : \Omega \to [0,1]$ be measurable and $\varphi : \Omega \times \mathbb{R}^d \to [0,+\infty]$ be such that $\varphi(x,\cdot) \in \textbf{H}_{\alpha,\beta,V}(\theta(x))$ for a.e. $x \in \Omega$. Then there exist sequences $\{\theta_n\}_n, \{c^n_\xi\}_{\xi,n}  \subset C(\Omega),0 \leq \theta_n \leq 1$ such that $\varphi_n : \Omega \times \mathbb{R}^d \to [0,+\infty]$ defined by 
 \begin{align}\label{densityform}
 \varphi_n(x,\nu)= \sum_{\xi \in V} c_\xi^n(x)|\langle \nu,\xi\rangle| 
 \end{align}
 satisfy $\varphi_n(x,\cdot) \in \textbf{H}_{\alpha,\beta,V}(\theta_n(x))$ for all $x \in \Omega$,  $E_{\varphi_n}$  $\Gamma$-converges to $E_\varphi$ and  $\theta_n \overset{*}{\rightharpoonup} \theta$.
  \end{proposition}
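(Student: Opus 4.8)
The plan is to reach a continuous crystalline density of the form (\ref{densityform}) by three successive approximations, each compatible with the membership $\varphi(x,\cdot)\in\textbf{H}_{\alpha,\beta,V}(\theta(x))$, and then to diagonalise; at every stage $\Gamma$-convergence is checked through Theorem \ref{convergence} (convergence of the ball minima $m_n(x,\nu,\rho)\to m(x,\nu,\rho)$ off a countable set of radii), and the volume fractions are kept under control by means of Lemma \ref{properties of theta}. The one elementary fact to record first is that $\textbf{H}_{\alpha,\beta,V}(\cdot)$ is convex in the volume fraction and that this passes to integral averages: by the Kuratowski--Ryll-Nardzewski theorem there is a measurable selection $z\mapsto\{\theta_\xi(z)\}_{\xi\in V}\in[0,1]^{\#V}$ realising the upper bound of Theorem \ref{BoundsTheorem} for $\varphi(z,\cdot)$ with $\frac1{\#V}\sum_{\xi}\theta_\xi(z)=\theta(z)$; averaging $\varphi(\cdot,\nu)$ against any probability kernel then keeps it below the same envelope with the averaged weights, and evenness, convexity, $1$-homogeneity and the lower bound in (\ref{tri}) are preserved.

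First I would reduce to data continuous in $x$. Extending $\varphi$, $\theta$ and the weights $\theta_\xi$ outside $\Omega$ compatibly (by $\sum_\xi\alpha_\xi|\langle\cdot,\xi\rangle|$, $0$, $0$) and mollifying only in $x$, set $\varphi_\delta(x,\nu)=\int\rho_\delta(y)\varphi(x-y,\nu)\,dy$, $\theta_{\delta,\xi}=\rho_\delta*\theta_\xi$, $\theta_\delta=\rho_\delta*\theta$; by the previous remark $\varphi_\delta(x,\cdot)\in\textbf{H}_{\alpha,\beta,V}(\theta_\delta(x))$ for all $x$, with weights $\theta_{\delta,\xi}\in C(\overline\Omega;[0,1])$, $\varphi_\delta$ continuous on $\overline\Omega\times S^{d-1}$, $\theta_\delta\in C(\overline\Omega;[0,1])$, $\theta_\delta\to\theta$ in $L^1$ (hence $\theta_\delta\overset{*}{\rightharpoonup}\theta$), and $\varphi_\delta\to\varphi$ in $L^1_{\mathrm{loc}}(\Omega;C(S^{d-1}))$; a recovery--sequence argument then gives $m_\delta\to m$ a.e., so $E_{\varphi_\delta}\overset{\Gamma}{\to}E_\varphi$ by Theorem \ref{convergence}. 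Next, following Step 1 of the proof of Proposition \ref{Gammaclosure} but keeping $x$, I would replace $\varphi(x,\cdot)$ --- at the level of $\Gamma$-convergence, via monotone approximation and \cite{DM} --- by crystalline densities $\sum_{j=1}^N c_j(x)|\langle\nu,\nu_j\rangle|$ squeezed between $\sum_\xi\alpha_\xi|\langle\cdot,\xi\rangle|$ and $\sum_\xi(\theta_\xi(x)\beta_\xi+(1-\theta_\xi(x))\alpha_\xi)|\langle\cdot,\xi\rangle|$; uniform continuity of $x\mapsto\varphi(x,\cdot)$ on the compact $\overline\Omega$ lets one fix a finite rational direction set $\{\nu_1,\dots,\nu_N\}\supseteq V$ independently of $x$ and get $c_j\in C(\overline\Omega)$. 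Thus I may assume $\varphi(x,\nu)=\sum_{j=1}^N c_j(x)|\langle\nu,\nu_j\rangle|$ with $c_j\in C(\overline\Omega)$, $\nu_j$ rational, $V\subseteq\{\nu_j\}$, together with continuous weights $\theta_\xi(\cdot)$ of fixed average $\theta(\cdot)\in C(\overline\Omega)$ realising the upper bound.

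For such a crystalline $\varphi$ I would reproduce Steps 2--4 of the proof of Proposition \ref{Gammaclosure} in the continuum and with the macroscopic variable carried along. With $x$ frozen, that construction yields, for small $\eta,\delta$ and oscillation scale $\e$, a density $f_{\eta,\delta}(y,x,\nu)=\sum_{\xi\in V}\hat c_\xi(y,x)|\langle\nu,\xi\rangle|$, $\mathbb{Z}^d$-periodic in $y$, which may be taken two-phase (values $\alpha_\xi,\beta_\xi$, since the coefficients $c^k_\xi(x),c_\xi(x)$ appearing there are convex combinations of $\alpha_\xi,\beta_\xi$ realisable by two-phase patterns on a finer period), has local volume fraction $\theta_\xi(x)$ up to $O(\eta)$ as in (\ref{closeness}), and is such that $E_\e$ with density $f_{\eta,\delta}(\cdot/\e,x,\cdot)$ $\Gamma$-converges, as $\e\to0$ then $\eta,\delta\to0$, to the frozen energy $E_{\varphi(x,\cdot)}$. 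Freezing $x$ on cubes of side $\lambda$ with $\e\ll\lambda\ll1$ and patching (the interface error is $o(1)$ after translating the grid, or one invokes an $x$-dependent homogenisation theorem in the spirit of \cite{AI,AS}) gives $E_\e$ with density $\sum_{\xi\in V}\hat c_\xi(x/\e,x)|\langle\nu,\xi\rangle|$, still $\Gamma$-converging to $E_\varphi$. Finally I would smooth the $\{\alpha_\xi,\beta_\xi\}$-valued field $\hat c_\xi$ across thin collars of every interface it creates (the microscopic jump set, and the macroscopic cube faces if the freezing route is used) by linear interpolation: the new coefficients are continuous, take values in $[\alpha_\xi,\beta_\xi]$, and change the energy by $O(\sigma)$, so the $\Gamma$-limit is unchanged (checked once more by Theorem \ref{convergence}). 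Setting $c^n_\xi(x):=\hat c^{\,\sigma_n}_\xi(x/\e_n,x)\in C(\overline\Omega)$ and $\theta_n(x):=\frac1{\#V}\sum_{\xi\in V}\frac{c^n_\xi(x)-\alpha_\xi}{\beta_\xi-\alpha_\xi}\in[0,1]$, the density $\varphi_n$ of (\ref{densityform}) equals its own upper bound with weights $\frac{c^n_\xi(x)-\alpha_\xi}{\beta_\xi-\alpha_\xi}$, hence $\varphi_n(x,\cdot)\in\textbf{H}_{\alpha,\beta,V}(\theta_n(x))$ by Theorem \ref{General}; and since $c^n_\xi$ oscillates fast about the local average $\theta_\xi(x)$ between $\alpha_\xi$ and $\beta_\xi$, one gets $\theta_n\overset{*}{\rightharpoonup}\theta$.

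It then remains to diagonalise over all the parameters ($\delta$, the crystalline refinement, $\e,\eta,\delta,\sigma$): equi-coercivity and the metrizability of $\Gamma$-convergence (\cite{DM}, Theorem 10.22) make this legitimate, Lemma \ref{properties of theta} guarantees that the volume-fraction functions stay measurable/continuous and in $[0,1]$, and since they converge weakly$^*$ to $\theta$ along each family the diagonal sequence preserves $\theta_n\overset{*}{\rightharpoonup}\theta$, producing $\{c^n_\xi\}\subset C(\Omega)$ and $\{\theta_n\}\subset C(\Omega)$ with all the required properties. The main obstacle is the third paragraph: combining the microgeometry of Proposition \ref{Gammaclosure} with a slowly varying macroscopic volume fraction and then turning the two-phase coefficients into continuous ones, while simultaneously preserving the value of the $\Gamma$-limit and the weak$^*$ convergence of the local volume fractions --- which is exactly what Theorem \ref{convergence} (to reduce $\Gamma$-convergence to ball minima) and Lemma \ref{properties of theta} are there to handle.
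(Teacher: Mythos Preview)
Your approach is genuinely different from the paper's. The paper never mollifies $\varphi$ nor builds oscillating microstructures; instead it works directly with the minimum problems $m(x_i,\nu_i,\rho_i)$ on a countable dense family, takes near-minimisers $u_i^n$, uses Lusin's theorem on the rectifiable sets $\partial^*\{u_i^n=1\}$ to find compact pieces $K_{i,n}$ where $x\mapsto(\nu_{u_i^n}(x),\varphi(x,\nu_{u_i^n}(x)))$ is continuous, and then patches together a lower-semicontinuous $\varphi_n=\min_{i\le n}\varphi_{n,i}$ of the form (\ref{densityform}) by hand so that $m_n(x_i,\nu_i,\rho_i)\to m(x_i,\nu_i,\rho_i)$; a final monotone approximation from below gives continuous coefficients. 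This is rather low-tech but robust, because it controls $\varphi$ exactly where the energy lives, namely on $(d-1)$-dimensional jump sets.

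Your first reduction is where the real difficulty hides, and as written it is a gap. Mollifying $\varphi$ in $x$ gives $\varphi_\delta\to\varphi$ only $\mathcal{L}^d$-a.e.\ and in $L^1_{\mathrm{loc}}$, but the functional $E_\varphi$ integrates $\varphi$ over $(d-1)$-dimensional hypersurfaces, on which an $\mathcal{L}^d$-null exceptional set can have full $\mathcal{H}^{d-1}$ measure. So for a fixed competitor $u$ there is no reason that $E_{\varphi_\delta}(u)\to E_\varphi(u)$, and your ``recovery-sequence argument then gives $m_\delta\to m$ a.e.'' is neither justified nor sufficient: Theorem \ref{convergence}(ii) requires convergence of $m_n(x,\nu,\rho)$ for \emph{every} $(x,\nu)$ and all $\rho$ outside a countable set (or, via Remark \ref{equality}, on a prescribed countable dense family), not merely for a.e.\ $(x,\nu,\rho)$. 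This is precisely why the paper bypasses mollification and instead uses Lusin on the jump sets of specific near-minimisers, which pins down $\varphi$ $\mathcal{H}^{d-1}$-a.e.\ where it matters. Your third paragraph (two-scale freezing plus continuum microgeometry plus collar-smoothing) compounds the issue: each of those steps is plausible for surface energies but none is routine, and the combined statement that the $\Gamma$-limit is preserved and the local volume fraction converges weakly$^*$ would itself need an argument at the level of Proposition \ref{Gammaclosure}. If you want to rescue the scheme, the honest fix is to replace the mollification step by the paper's Lusin construction (or an equivalent device that yields $\mathcal{H}^{d-1}$-control on the relevant hypersurfaces); once $\varphi$ has continuous coefficients the remaining crystalline/oscillation steps become unnecessary, because a density already of the form $\sum_\xi c_\xi(x)|\langle\nu,\xi\rangle|$ is exactly the target (\ref{densityform}).
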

  \begin{proof} Since the energies involved are all equicoercive by the metrizability-properties of $\Gamma$-convergence (see [\cite{DM},Theorem 10.22])  we can use  a diagonal argument. It suffices to construct lower semicontinuous densities of the form (\ref{densityform}) such that the associated energies $\Gamma$-converge to $E_\varphi$. Every such function can be approximated from below by functions of the  form (\ref{densityform}) with continuous coefficients. Therefore the associated energies $\Gamma$-converge to the energy associated to the limit density and by  Lemma \ref{properties of theta} (ii) the associated local volume fractions converge weakly*.

 We now prove that there exists $\{\varphi_n\}_n$ of the form (\ref{densityform})
with $c_\xi^n: \Omega \to [0,\infty)$ lower semicontinuous and $E_{\varphi_n}$ $\Gamma$-converges to $E_\varphi$. By Theorem \ref{convergence} and Remark \ref{equality} it suffices to find $\varphi_n$ such that
  \begin{align*}
\lim_{n \to \infty}  m_n(x_i,\nu_i,\rho_i) = m(x_i,\nu_i,\rho_i) 
  \end{align*}
  where $\{x_i,\rho_i,\nu_i\}_{i \in \mathbb{N}} \subset \Omega \times (0,\mathrm{dist}(x,\partial \Omega)) \times S^{d-1}$ is a dense set. Moreover we assume,  by Besicovitch Covering Theorem and Remark \ref{equality},  that $|\Omega \setminus \bigcup_{i=1}^\infty  B_{\rho_i}(x_i)|=0 $. Let $u^n_i \in BV(\Omega,\{\pm 1\})$ be such that $u_i^n = u_{x_i,\nu_i} $ in a neighborhood of $\partial B_{\rho_i}(x_i)$ and
  \begin{align*}
  E_\varphi(u^n_i,B_{\rho_i}(x_i)) \leq m(x_i,\rho_i,\nu_i) +\frac{1}{n}.
  \end{align*}
  By Lusin's theorem and the rectifiability of $\partial^*\{u^n_i=1\}$ there exists $K_{i,n},C_{i,n} \subset \mathbb{R}^d $ compact and $a_\xi : K_{i,n} \to [0,\infty) $ such that
  \begin{itemize}
  \item[i)] $K_{i,n} \subset \partial^*\{u^n_i=1\} \cap B_{\rho_i}(x_i)$ , $x \mapsto\nu_{u^n_i}(x) $, $x \mapsto \varphi(x,\nu_{u^n_i}(x)) $ are continuous on $K_{i,n}$ and
    \begin{align*}
  \mathcal{H}^{d-1}(\partial^*\{u^n_i=1\} \setminus K_{i,n}) < \frac{1}{n}.
  \end{align*}
  \item[ii)] $C_{i,n} \subset B_{\rho_i}(x_i) \setminus K_{i,n}$ such that  
  \begin{align*}
  |B_{\rho_i}  \setminus C_{i,n}|=|(B_{\rho_i} \setminus K_{i,n}) \setminus C_{i,n}| < \frac{1}{n}2^{-i}.
  \end{align*}
and 
  \begin{align*}
  \varphi(x,\nu) \leq \sum_{\xi \in V} a_\xi(x)|\langle\nu,\xi\rangle|
  \end{align*}
for all $ x \in C_{i,n}, \nu \in S^{d-1}$. Moreover for all $\xi_0 \in V$ there holds
  \begin{align*}
  \varphi\Big(x,\frac{\xi_0}{||\xi_0||}\Big)= \sum_{\xi \in V} a_\xi(x)\Big|\langle \frac{\xi_0}{||\xi_0||},\xi\rangle\Big|.
  \end{align*}
    \end{itemize}
Define
  \begin{align*}
  \varphi_{n,i}(x,\nu) = \begin{cases} \sum_{\xi \in V} c_\xi(x,\nu_{u^n_i}(x)) |\langle\nu,\xi\rangle| & x \in K_{i,n} \\
  \sum_{\xi \in V} a_\xi(x) |\langle \nu,\xi\rangle| & x \in C_{i,n} \\
\sum_{\xi \in V}  \beta_\xi |\langle \nu,\xi\rangle| &\text{otherwise,}
  \end{cases}
  \end{align*}
  where $ c_\xi(x,\nu_{u^n_i}(x)) \in C(\Omega)$ are chosen  such that 
  \begin{align*}
  \sum_{\xi \in V}c_\xi(x,\nu_{u^n_i}(x))|\langle\nu_{u^n_i}(x),\xi\rangle|= \varphi(x, \nu_{u^n_i}(x))
  \end{align*}
  and set 
\begin{align*}
  \varphi_n(x,\xi) := \min_{1 \leq i \leq n} \varphi_{n,i}(x,\xi).
\end{align*}  
$\varphi_n(\cdot,\xi)$ is lower semicontinuous and for all $ 1 \leq i \leq n$ we have 
\begin{align*}
m_n(x_i,\nu_i,\rho_i) &\leq E_{\varphi_n}(u^n_i,B_{\rho_i}(x_i)) \\& \leq \int_{K_{i,n}} \varphi_n(x,\nu_{u_i^n}(x))\mathrm{d}\mathcal{H}^{d-1} + \int_{\partial^*\{u=1\} \setminus K_{i,n}} \varphi_n(x,\nu_{u_i^n}(x))\mathrm{d}\mathcal{H}^{d-1} \\& \leq \int_{\partial^*\{u=1\}} \varphi(x,\nu_{u_i^n}(x))\mathrm{d}\mathcal{H}^{d-1} + C \mathcal{H}^{d-1}(\partial^*\{u^n_i=1\} \setminus K_{i,n}) \\&\leq m(x_i,\nu_i,\rho_i) + \frac{C}{n}.
\end{align*}
Now let $\varepsilon >0$ and let $u \in BV(\Omega,\{ \pm 1\})$ be such that $u= u_{x_i,\nu_i}$ in a neighborhood of $\partial B_{\rho_i}(x_i)$  and
\begin{align*}
E_{\varphi_n}(u,B_{\rho_i}(x_i)) \leq m_n(x_i,\nu_i,\rho_i) + 
\varepsilon.
\end{align*}
We then have, noting that $\nu_u = \nu_{u^n_i} \, \mathcal{H}^{d-1}$-a.e. on $\partial^*\{u=1\}$, 
\begin{align*}
&m_n(x_i,\rho_i,\nu_i) + 
\varepsilon \geq E_{\varphi_n}(u,B_{\rho_i}(x_i)) = \int_{\partial^*\{u=1\} \cap B_{\rho_i}(x_i)} \varphi_n(x,\nu_u(x))\mathrm{d}\mathcal{H}^{d-1}\\ = &\sum^n_{j=1} \int_{I^n_j \cap B_{\rho_i}(x_i)} \varphi_n(x,\nu_u(x))\mathrm{d}\mathcal{H}^{d-1} + \int_{\left(\partial^*\{u=1\}\setminus \bigcup^n_{j=1} I^n_j\right) \cap B_{\rho_i}(x_i)} \varphi_n(x,\nu_u(x))\mathrm{d}\mathcal{H}^{d-1} \\ \geq &\sum^n_{j=1} \int_{I^n_j \cap B_{\rho_i}(x_i)} \varphi(x,\nu_u(x))\mathrm{d}\mathcal{H}^{d-1} + \int_{\left(\partial^*\{u=1\}\setminus \bigcup^n_{j=1} I^n_j\right) \cap B_{\rho_i}(x_i)} \varphi(x,\nu_u(x))\mathrm{d}\mathcal{H}^{d-1}\\=& E_\varphi(u,B_{\rho_i}(x_i)) \geq m(x_i,\nu_i,\rho_i),
\end{align*}
where $I^n_j = K_{j,n} \setminus \bigcup_{j' < j} K_{j',n}$. Letting $\varepsilon \to 0$ we obtain
\begin{align*}
|m_n(x_i,\nu_i,\rho_i)-m(x_i,\nu_i,\rho_i)|\leq \frac{C}{n} \text{ for all } 1 \leq i \leq n.
\end{align*}
Hence
\begin{align*}
m_n(x_i,\nu_i,\rho_i) \to m(x_i,\nu_i,\rho_i) \text{ for all } i \in \mathbb{N}.
\end{align*}
Therefore by Theorem \ref{convergence} we have that $E_{\varphi_n} $ $\Gamma$-converges to $E_\varphi$. 

It remains to show that $\theta_n \overset{*}{\rightharpoonup} \theta$.
By the definition of $\varphi_n$ we have that
 \begin{align*}
 \theta_{\varphi_n}(x) =\begin{cases} \theta(x) & x \in \bigcup_{i=1}^nC_{i,n}\\
 1 & x \notin \bigcup_{i=1}^n C_{i,n}
 \end{cases}
 \end{align*}
 where we remark that $|K_{i,n}|=0$. Now let $f \in L^1(\Omega)$ and  $\delta > 0$ let  $n_\delta \in \mathbb{N} $ be such that $|\Omega \setminus \bigcup^{n_\delta}_{i=1}B_{\rho_i}(x_i)| < \delta $. We have for $n $ big enough
 \begin{align*}
 \int_{\Omega} f (\theta_n-\theta)\mathrm{d}x = \int_{\Omega \setminus \bigcup^n_{i=1}C_{i,n}} f(\theta_n-\theta) \mathrm{d}x + \int_{ \bigcup^n_{i=1}C_{i,n}} f(\theta_n -\theta)\mathrm{d}x = \int_{\Omega \setminus \bigcup^n_{i=1}C_{i,n}} f(1-\theta) \mathrm{d}x.
 \end{align*}
 Note that $|f(1-\theta)| \leq 2f$ and 
\begin{align*}
 |\Omega \setminus \bigcup^{n_\delta}_{i=1}C_{i,n}|\leq |\Omega \setminus \bigcup^{n_\delta}_{i=1}B_{\rho_i}(x_i)| + | \bigcup^{n_\delta}_{i=1}B_{\rho_i}(x_i)\setminus C_{i,n}| \leq \delta + \frac{1}{n}.
\end{align*} 
Hence by the dominated convergence theorem we have that
\begin{align*}
|\int_{\Omega \setminus \bigcup^n_{i=1}C_{i,n}} f(1-\theta) \mathrm{d}x| \to 0
\end{align*}
as  $n \to \infty, \delta \to 0$.
Therefore we have that $\theta_n \overset{*}{\rightharpoonup} \theta$ as $n \to \infty$.
 \end{proof}
 \begin{proof}[Proof of Theorem \ref{Localization 1}:] By the metrizability-properties of $\Gamma$-convergence and of the weak*-convergence on bounded sets we proceed by successive approximation and conclude then by a diagonal argument.
 
 \medskip
 
\underline{Step 1:}  By Proposition \ref{Density of Continuos integrands} we can assume that 
\begin{align}\label{Step 1}
\varphi(x,\nu) = \sum_{\xi \in V}c_\xi(x)|\langle \nu,\xi\rangle| 
\end{align}
 for all  $(x,\nu) \in \Omega \times \mathbb{R}^d$
with $c_\xi \in C(\Omega,[\alpha_\xi,\beta_\xi])$, $\xi \in V$. Note that by (iii) of Lemma \ref{properties of theta} we have that $\theta_\varphi$ is continuous. 

\medskip

\underline{Step 2:} For every $\varphi$ of the form (\ref{Step 1}) let $c_\xi^k : \Omega \to [\alpha_\xi,\beta_\xi] $ be defined by
\begin{align*}
c^k_\xi(x)= \begin{cases}
\inf_{z \in Q_{2^{-k}}(x_n)} c_\xi(z) & x \in Q_{2^{-k}}(x_n), x_n \in \mathcal{Z}_k \\
\inf_{z \in Q_{2^{-k}}(x_n)\cup Q_{2^{-k}}(x_{n'})} c_\xi(z) & x \in  \overline{Q}_{2^{-k}}(x_n) \cap \overline{Q}_{2^{-k}}(x_{n'}), x_{n},x_{n'} \in \mathcal{Z}_k
\end{cases}
\end{align*}
with $\mathcal{Z}_k : = 2^{-k}\mathbb{Z}^d +\sum_{i=1}^d e_i 2^{-k-1}$.
If we define $\varphi_k : \Omega \times \mathbb{R}^d \to [0,+\infty)$ by
\begin{align} \label {Step 2}
\varphi_k(x,\nu) = \sum_{\xi \in V} c_\xi^k(x) |\langle \nu,\xi\rangle|
\end{align}
and $E_k : BV(\Omega,\{\pm 1\}) \to [0,+\infty)$
\begin{align*}
E_k(u) = \int_{\partial^*\{u=1\}\cap \Omega}\varphi_k(x,\nu_u(x))\mathrm{d}\mathcal{H}^{d-1}
\end{align*}
we have that 
\begin{align*}
\Gamma\text{-}\lim_{k \to \infty} E_k(u) = E(u).
\end{align*}
This follows since $c^k_\xi$ converges increasingly to $c_\xi$. Moreover by Lemma \ref{properties of theta} we have that $\theta_{\varphi_k} \overset{*}{\rightharpoonup}\theta_{\varphi}$ since $\varphi_k(x,\cdot) \to \varphi(x,\cdot) $ for all $x \in \Omega$.

\medskip

\underline{Step 3:} Every $E_k : BV(\Omega,\{\pm 1\}) \to [0,+\infty)$ can be approximated by $E^\delta_k : BV(\Omega,\{\pm 1\}) \to [0,+\infty)$ of the form 
\begin{align} \label{Step 3}
E_k^\delta(u) = \int_{\partial^*\{u=1\}\cap \Omega} \varphi^\delta_k(x,\nu_u(x))\mathrm{d}\mathcal{H}^{d-1}
\end{align}
where  $\varphi^\delta_k : \Omega \times \mathbb{R}^d \to [0,+\infty]$ is defined by
\begin{align*}
\varphi^\delta_k(x,\nu) = \sum_{\xi \in V} c_\xi^{k,\delta}(x)|\langle \nu,\xi\rangle|
\end{align*}
with
\begin{align*}
c_\xi^{k,\delta}(x) =\begin{cases} c_\xi^k(x) & x \in Q_{2^{-k}(1-\delta)}(x_n), x_n \in \mathcal{Z}_k \\
\beta_\xi &\text{otherwise.}
\end{cases}
\end{align*}

To prove Step 3 note first that since $c^{k,\delta}_\xi \geq c^k_\xi$ for all $\delta >0$, we have that 
\begin{align*}
\Gamma\text{-}\liminf_{\delta \to 0 } E^\delta_k(u) \geq E_k(u)
\end{align*}
By [\cite{AB},Theorem 3.2] we have that
\begin{align*}
\Gamma\text{-}\limsup_{\delta \to 0} E^\delta_k(u) = \int_{\partial^*\{u=1\} \cap \Omega} \tilde{\varphi}_k(x,\nu_u(x))\mathrm{d}\mathcal{H}^{d-1}
\end{align*}
for some $\tilde{\varphi} :\Omega \times \mathbb{R}^d \to [0,+\infty)$, where
\begin{align*}
\tilde{\varphi}_k(x,\nu) = \limsup_{\rho \to 0} \frac{m_k(x,\nu,\rho)}{w_{d-1}\rho^{d-1}}.
\end{align*}
Using [\cite{BFM},Lemma 4.3.5] we have that
\begin{align*}
\tilde{\varphi}_k(x,\nu) = \limsup_{\rho \to 0} \frac{m_k(x,\nu,\rho) }{w_{d-1}\rho^{d-1}}= \limsup_{\rho \to 0}\liminf_{\delta \to 0} \frac{m^\delta_k(x,\nu,\rho)}{w_{d-1}\rho^{d-1}} .
\end{align*}
Hence it follows that $\tilde{\varphi}(x,\nu) = \varphi(x,\nu)$  for all $x \in Q_{2^{-k}}(x_n), x_n \in \mathcal{Z}_k$. From this fact and by the lower semicontinuity of the energy functional we conclude,  as in the proof of (\ref{boundaryineq}), that $\tilde{\varphi}(x,\nu) \leq  \varphi(x,\nu)$ for all $x \in \partial Q_{2^{-k}}(x_n),\, x_n \in \mathcal{Z}_k$ with $\nu$ being the normal vector of $\partial Q_{2^{-k}}(x_k)$.  For every $\delta$  small enough we have that $c_\xi^{k,\delta} =c_\xi^k$ for all $x \in Q_{2^{-k}(1-\delta)}(x_n), x_n \in \mathcal{Z}_k$ with $\lim_{\delta \to 0}|\Omega \setminus \bigcup_{x_n \in \mathcal{Z}_k} Q_{2^{-k}(1-\delta)}(x_n)|=0$, therefore it follows that $\theta_{\varphi_k^\delta} \overset{*}{\rightharpoonup} \theta_{\varphi_k}$. 

\underline{Step 4:}  For $E^\delta_k : BV(\Omega,\{\pm 1\}) \to [0,+\infty)$  of the form  (\ref{Step 3}) we can find energies $E^{k,\delta}_{\varepsilon} : BV(\Omega,\{\pm 1\}) \to [0,+\infty)$ of the form
\begin{align} \label{Step 4}
E_\varepsilon^{k,\delta}(u) = \frac{1}{4}\sum_{\xi \in V}\sum_{i,i+\xi \in \Omega_\varepsilon} \varepsilon^{d-1} c^{k,\delta,\varepsilon}_{i,\xi} (u_{\varepsilon i}-u_{\varepsilon(i+\xi)})^2
\end{align}
for some $c^{k,\delta,\varepsilon}_{i,\xi} \in \{\alpha_\xi,\beta_\xi\}^{\mathbb{Z}^d}$ such that $\theta(\{c_{i,\xi}^{k,\delta,\varepsilon}\}) \overset{*}{\rightharpoonup} \theta_{\varphi^\delta_k} $.

Since $\varphi^\delta_k(x,\cdot) \in \textbf{H}_{\alpha,\beta,V}(\theta_{\varphi_{k}^\delta}(x_n))$ if $x \in Q_{2^{-k}(1-\delta)}(x_n), x_n \in \mathcal{Z}_k $ there exists a sequence of homogenized densities $\{\varphi^{k,\delta,n}_N\} $ such that $\varphi^{k,n}_N \to \varphi(x_n,\cdot)$ as $N \to \infty$ and for every $N \in \mathbb{N}$ there exist $\{c^{k,n,N}_{i,j}\}$ $T$-periodic for some $T \in \mathbb{N}$ such that $\theta(\{c^{k,n,N}_{i,\xi}\}) \to \theta (x_n) $,  where $\theta(\{c^{k,n,N}_{i,\xi}\})$ is  in the notation of (\ref{theta}), and
\begin{align*}
\Gamma\text{-}\lim_{\varepsilon \to 0} E^{k,n,N}_\varepsilon(u) = \int_{\partial^*\{u=1\} \cap \Omega}\varphi^{k,n}_N (\nu_u(x))\mathrm{d}\mathcal{H}^{d-1}.
\end{align*}  
We define $E_{k,N,\delta} : BV(\Omega,\{\pm 1\}) \to [0,+\infty)$ by
\begin{align} \label{last}
E_{k,N,\delta}(u) = \int_{\partial^*\{u=1\} \cap \Omega}\varphi^{k,\delta}_N (x,\nu_u(x))\mathrm{d}\mathcal{H}^{d-1},
\end{align}
with 
\begin{align*}
\varphi^{k,\delta}_N(x,\nu) =\begin{cases}
\varphi^{k,n}_N(\nu) & x  \in Q_{2^{-k}(1-\delta)}(x_n), x_n \in \mathcal{Z}_k \\
\sum_{\xi \in V} \beta_\xi |\langle \nu,\xi\rangle | &\text{otherwise.}
\end{cases}
\end{align*}
We have that
\begin{align*}
\Gamma\text{-}\lim_{N \to \infty} E_{k,N,\delta}(u)  = E_k^\delta(u).
\end{align*}
This follows from $ \varphi^{k,n}_N \to \varphi(x_n,\cdot)$ and therefore
\begin{align*}
\tilde{\varphi}^\delta_k(x,\nu) = \limsup_{\rho \to 0} \frac{m^\delta_k(x,\nu,\rho)}{w_{d-1}\rho^{d-1}} = \limsup_{\rho \to 0}\liminf_{N \to \infty} \frac{m^\delta_{k,N}(x,\nu,\rho)}{w_{d-1}\rho^{d-1}} .
\end{align*}  
Note that if we define $E^{k,N,\delta}_\varepsilon :BV(\Omega,\{\pm 1\}) \to [0,+\infty)$ by (\ref{Step 4}) with
\begin{align*}
c^{k,\delta,N,\varepsilon}_{i,\xi}= \begin{cases} c^{k,n,N}_{i,\xi} & i \in ( Q_{2^{-k}(1-\delta)}(x_n))_\varepsilon, x_n \in \mathcal{Z}_k \\
\beta_\xi &\text{otherwise}
\end{cases}
\end{align*}
it holds that
\begin{align*}
\Gamma\text{-} \lim_{\varepsilon \to 0}E_\varepsilon^{k,\delta,N}(u) =\Gamma\text{-} \lim_{\varepsilon \to 0} \frac{1}{4} \sum_{\xi \in V}\sum_{i,i+\xi \in \Omega_\varepsilon} \varepsilon^{d-1} c^{k,\delta,N,\varepsilon}_{i,\xi} (u_{\varepsilon i}-u_{\varepsilon(i+\xi)})^2 = E_{k,N,\delta}(u)
\end{align*}
with $E_{k,N,\delta}(u)$ of the form (\ref{last}). The statement follows by a diagonal argument for the convergence of the energies and the local volume fractions. 
\end{proof}

\end{document}